\documentclass{amsart}
\usepackage{amssymb,amsmath, amsthm, amsfonts}
\usepackage{mathrsfs,color}
\usepackage[utf8]{inputenc}
\usepackage{float}
\usepackage[all]{xypic}
\newcommand{\luk}{\L u\-ka\-si\-e\-w\-icz}
\usepackage{pgf,tikz}
\newtheorem{theorem}{Theorem}[section]
\newtheorem{lemma}[theorem]{Lemma}
\newtheorem{corollary}[theorem]{Corollary}
\newtheorem{proposition}[theorem]{Proposition}
\newtheorem{claim}[theorem]{Claim}
\usepackage{amsaddr}

\theoremstyle{definition}
\newtheorem{notation}[theorem]{Notation}
\newtheorem{definition}[theorem]{Definition}
\newtheorem{remark}[theorem]{Remark}
\newtheorem{example}[theorem]{Example}

\newcommand{\FPL}{{\rm FP(\L,\L)}}
\newcommand{\pfm}{{\bf PFm}}
\newcommand{\mo}{{\mathcal{M}od}}
\newcommand{\lu}{{\textrm \L}}
\newcommand{\free}{{\rm Free}}
\newcommand{\term}{{\rm Term}}
\newcommand{\bv}{{\bf v}}
\newcommand{\by}{{\bf y}}

\newcommand{\CMV}{\mathsf{coMV}}
\newcommand{\alg}[1]{{\textbf{\upshape #1}}}  %
\newcommand{\vv}[1]{\mathsf {#1}}
\newcommand{\cc}[1]{\mathcal {#1}}
\newcommand{\scr}[1]{\mathscr {#1}}

\newcommand{\oneset}{{\rm O}}
\newcommand{\rest}{{\upharpoonright_\mathscr{P}}}

\usepackage[left=3cm,right=3cm,top=2.5cm,bottom=2.5cm]{geometry}


\begin{document}
\title[Encoding  de Finetti's coherence within \L ukasiewicz logic and MV-algebras]{Encoding  de Finetti's coherence\\ within \L ukasiewicz logic and MV-algebras}
\author[T. Flaminio and S. Ugolini]{Tommaso Flaminio and Sara Ugolini}
\address{Artificial Intelligence Research Institute (IIIA), CSIC, Barcelona, Spain}
\email{\texttt{\{tommaso, sara\}@iiia.csic.es}}
\date{}

\maketitle
\begin{abstract}
The present paper investigates proof-theoretical and algebraic properties for the probability logic $\FPL$, meant for reasoning on the uncertainty of \luk\ events. Methodologically speaking, we will consider a translation function between formulas of $\FPL$ to the propositional language of \luk\ logic that allows us to  apply  the latter and the well-developed theory of MV-algebras directly to probabilistic reasoning. More precisely, leveraging on such translation map, we will show proof-theoretical properties for $\FPL$ and introduce a class of algebras with respect to which $\FPL$ will be proved to be locally sound and complete. Finally, we will apply these previous results to investigate what we called ``probabilistic unification problem''. In this respect, we will prove that Ghilardi's algebraic view on unification can be extended to our case and, on par with the \luk\ propositional case, we show that probabilistic unification is of nullary type.
\end{abstract}

\section{Introduction and motivation}\label{sec1}
Identifying probability theory as part of logic surely is one of the main conceptual contributions and groundbreaking ideas that George Boole reported in the introduction of his seminal work \cite[\S1]{Boole}:
 
\begin{quotation} 
{\em  The design of the following treatise is to investigate the fundamental laws of those operations of the mind by which reasoning is performed; to give expression to them in the symbolical language of a Calculus, and upon this foundation to establish the science of Logic and construct its method; to make that method itself the basis of a general method for the application of the mathematical doctrine of Probabilities.}
\end{quotation}
In the quotation above, Boole recognizes probability theory as a subject that subsumes a type of reasoning that can be handled with the tools of (mathematical) logic  and the symbolical language of algebra. That is the reason why, nowadays, we tend to distinguish {\em probability calculus} and {\em probability logic} as two complementary, yet deeply interconnected, aspects of probability theory.  

		More recent times than those in which Boole published his work, have seen a flourishing of formal methods and logical approaches to deal with probability reasoning. Among them, it is worth recalling the model theoretical approach mainly developed by Keisler \cite{Keis76} and Hoover \cite{Hoo}; the more artificial intelligence oriented perspective initiated by Fagin, Halpern and Megiddo in \cite{FHM90}  and the one put forward by H\'ajek, Esteva and Godo in \cite{HGE}. In the latter, that we will mainly follow here,  probability is understood as a physical variable and it is modeled by a modal operator $P$ added to the language of \luk\ logic; formulas of the form $P(\varphi)$---for $\varphi$ any classical formula--- read as ``$\varphi$ is probable''. Interestingly, the logic of \cite{FHM90} and a slight variant of H\'ajek, Esteva and Godo's logic  have been shown to be syntactically interdefinable, and hence equivalent, in the recent \cite{BCN}.
	
	The equivalent algebraic semantics, in the sense of \cite{BP89}, of \luk\ logic is the class of MV-algebras, an algebraic variety whose generic structure  is defined, like probability logics, on the real unit interval $[0,1]$. The truth-value of a formula like $P(\varphi)$, once evaluated to $[0,1]$ by a \luk\ evaluation, is  the probability of $\varphi$.

The rationale behind what we briefly recalled in the above paragraph is that, although the non-negligible differences that distinguish \luk\ logic and probability logic, one can leverage on their similarities and expand the former by the extra operator $P$ and axiomatizing it in such a way to formalize probability reasoning.

In this paper we will be concerned with an extension of H\'ajek, Esteva and Godo's logic firstly axiomatized in \cite{FG07}, denoted by $\FPL$ and that has been recently proved (cf. \cite{F21}) to be the logic of  {\em state theory}: a generalization of probability theory for uncertain quantification on \luk\ sentences, introduced by Mundici in \cite{MuStates}. In $\FPL$, \luk\ logic plays a twofold role: it is the {\em inner} logic that represents the formulas that fall under the scope of the modality $P$ (i.e., {\em events}) and it is also the {\em outer} logic that reasons on complex probabilistic modal formulas.

More in detail, we will show that, roughly speaking, the modal expansion leading to the logic $\FPL$ is not needed to formalize probabilistic reasoning within \luk\ calculus. Indeed,  the categorical duality  between rational polyhedra and finitely presented MV-algebras put forward in \cite{MS13} will allow us to encode local, finitary, probabilistic information, described by the convex rational polyhedra being the geometric interpretation of de Finetti's coherence criterion (a foundation of probability theory) within \luk\ logic itself. By doing so, we will also consider a translation map from the modal (outer) language of $\FPL$ to the  propositional language of \luk\ logic that preserves, under basic needed assumptions, all theorems and deductions of $\FPL$.

The idea of translating the modal language of probability logics to the propositional \luk\ language is not actually new, and it has been used, for instance, to prove soundness and completeness for H\'ajek, Esteva and Godo's logic with respect to probability models \cite{HGE}. In more abstract terms,  the papers \cite{FGM11b} and \cite{CN} discuss the effect of such translation in general and set the minimal requirement for uncertainty logics to be complete w.r.t. their standard semantics. It is also worth recalling that in \cite{BCN} a similar translation has been used to introduce a proof-calculus for probabilistic reasoning. However, as observed in \cite{FG07} and \cite{F08}, the usual technique that allows to apply such a translation to  prove, for instance, soundness and completeness of probability logic, does not well-behave if the inner-logic, i.e., the logic used to represent events, is not locally finite, like in our case with \luk\ calculus. This is the reason why, in this paper, we need to come up with a new way of translating deductions that allows to handle the non-local finiteness of \luk\ logic and that relies, as already recalled above, on de Finetti's foundational work on coherence and its geometric characterization in terms of finite dimensional polytopes. 

In the present paper, besides detailing what is the effect of such translation to $\FPL$ and showing what results can be proved by its application, leveraging on the categorical duality between rational polyhedron and finitely presented MV-algebras, we will also investigate it in purely algebraic terms identifying a class of MV-algebras that form an algebraic semantics for $\FPL$. These algebras, that will be called {\em coherent}, form a proper subclass of finitely presented and projective MV-algebras. It is worth pointing out that coherent MV-algebras do not provide an equivalent algebraic semantics for $\FPL$ and indeed the problem of establishing the algebraizability of $\FPL$ is still open. 

In the last section of this paper, we will apply the results obtained by the aforementioned translation map and the algebraic properties of coherent MV-algebras to investigate what we call the {\em probabilistic unification problem} 
by exploiting the  key idea of treating the atomic modal formulas of the form $P(\varphi)$ as probabilistic variables. In this sense, and in complete analogy with the usual unification problem for algebraizable logics, unification problems can be easily presented as follows. Given a set of pairs of probabilistic modal formulas $\{(\Phi_i, \Psi_i)\mid i=1,\ldots, m\}$, find, if it exists, a {\em probabilistic substitution} $\sigma$ that maps modal subformulas of the form $P(\varphi_j)$ to (compound) probabilistic terms such that, for all $i=1,\ldots, m$ the identities $\sigma(\Phi_i)=\sigma(\Psi_i)$ hold in $\FPL$. In order to approach this kind of unification, we will first prove that Ghilardi's algebraic approach to  unification problems for algebraizable logics \cite{G97} has an analogous formulation also in our non-algebraizable setting. Secondly, we will show how the pathological example of Marra and Spada \cite{MS13} witnessing that the unification type of \luk\ logic is nullary, can be adapted to the case of $\FPL$ to prove that the probabilistic unification problem is nullary as well. That is, there are probabilistic unification problems with a co-final chain of unifiers of generality order type $\omega$.

The present paper is structured as follows: next section will recall needed notions and results concerning \luk\ logic and MV-algebras (Subsection \ref{sec21}) and in particular free and finitely presented MV-algebras (Subsection \ref{sec22}). In Subsection \ref{sec23} we will present the probability logic $\FPL$ and its semantics based on states. In the same Section \ref{sec2}, we will also present some new results on finitely presented MV-algebras and $\FPL$ that will be useful for what follows.
 A basic introduction to de Finetti's coherence and its geometry will be the subject of Section \ref{sec:cohe}, while in Section \ref{sec4} we will go back to investigate the logic $\FPL$ and the translation map that allows to regard its modal language at the propositional ground. In particular, we will show how to locally reduce, modulo the aforementioned translation, the entailment relation of $\FPL$ to that of \luk\ logic.  As consequences, we show the decidability of the deducibility relation of $\FPL$ and we obtain a local deduction theorem for the probabilistic logic.
 Coherent MV-algebras are defined in Section \ref{sec5} where, besides showing them to be special cases of projective structures (see Subsection \ref{sec51}), we will also prove, in Subsection \ref{sec62}, a local soundness and completeness theorem for $\FPL$ w.r.t. those algebras. In Section \ref{sec6} we will present what we called probabilistic unification problem, we prove how Ghilardi's approach can be rephrased in our context and, finally, we will adapt Marra and Spada's pathological example to the probability framework to show that the probabilistic unification is of nullary type.

\section{Preliminaries}\label{sec2}
In the present section we will go through the basic logical and algebraic notions on which the present paper is grounded. Precisely, \luk\ logic and MV-algebras (Subsection \ref{sec21}), finitely generated free MV-algebras and finitely presented MV-algebras (Subsection \ref{sec22}) and the probability logic $\FPL$ with its semantics based on states (Subsection \ref{sec23}). Besides recalling necessary notions and facts, new results on these subjects will be  proved and commented.  
\subsection{\luk\ logic and MV-algebras}\label{sec21}
\luk\ logic, \L\ in symbols, is a non-classical, many-valued calculus that can be axiomatized within a signature having the primitive binary connective $\oplus$ (disjunction),  the unary connective $\neg$ (negation), and the constant $\bot$ (falsum). Formulas, that we will henceforth denote by lower-case Greek letters, are defined by a non-empty set of propositional variables as usual. 
Other useful connectives and constants symbols are definable within the language of \luk\ logic as follows:
\begin{center}
$\top:=\neg\bot$; $\varphi\to \psi:=\neg \varphi\oplus \psi$; $\varphi\vee\psi:=(\varphi\to\psi)\to\psi$; $\varphi\wedge\psi:=\neg(\neg\varphi\vee\neg\psi)$; $\varphi\odot\psi:=\neg(\neg\varphi\oplus\neg\psi)$; $\varphi\leftrightarrow\psi:=(\varphi\to\psi)\wedge(\psi\to\varphi)$.
\end{center}
Furthermore, if $\varphi$ is any formula and $n$ is a positive integer, we will abbreviate 
\begin{center}
$\varphi\odot\ldots\odot\varphi$ ($n$-times) by $\varphi^n$ and\\
$\varphi\oplus\ldots\oplus\varphi$ ($n$-times) by $n\varphi$.
\end{center}
The set of formulas of \luk\ logic, will be henceforth denoted by ${\bf Fm}$, while ${\bf Fm}(k)$ will denote the set of formulas defined upon $k$ propositional variables.

Axioms and rules for \L\ are as follows:
\begin{itemize}
\item[(\L1)] $\varphi\to(\psi\to\varphi)$,
\item[(\L2)] $(\varphi\to \psi)\to((\psi\to\gamma)\to(\varphi\to \gamma))$,
\item[(\L3)] $((\varphi\to \psi)\to\psi)\to((\psi\to \varphi)\to\varphi)$,
\item[(\L4)] $(\neg\varphi\to\neg\psi)\to(\psi\to\varphi)$,
\item[(MP)] From $\varphi,\varphi\to\psi$, deduce $\psi$ ({\em modus ponens} rule).
\end{itemize}
Theorems are defined as usual and we will write $\vdash_{\lu}\varphi$ to denote that $\varphi$ is a theorem of \luk\ logic. In particular, we will say that two formulas $\varphi$ and $\psi$ are {\em logically equivalent} if $\vdash_{\lu}\varphi\leftrightarrow\psi$.

 If $\Gamma=\{\varphi_1,\varphi_2,\ldots,\varphi_\alpha\}$ are countably many (possibly infinitely many) formulas and  $\varphi$ is a formula, $\Gamma\vdash_{\lu}\varphi$ denotes that $\varphi$ can be deduced from the $\varphi_i$'s within \luk\ calculus.

In the statement of the next proposition, and elsewhere in the paper, we will write formulas as $\varphi(x_1,\ldots, x_k)$ whenever we need to highlight the propositional variables occurring in them. Moreover, if $\varphi(x_1,\ldots, x_k)$ is a formula in $k$ variables and $\tau_1,\ldots, \tau_k$ are formulas, we will write $\varphi(\tau_1,\ldots, \tau_k)$ to denote the formula obtained by substituting, for all $i=1,\ldots, k$, the variable $x_i$ by the formula $\tau_i$. 
\begin{proposition}\label{propBasicPropLuk}
The following properties hold for \L:
\begin{itemize}
\item[(SE)] Substitution of equivalents: if $\varphi(x_1,\ldots, x_k)\in {\bf Fm}(k)$, $\psi_1,\gamma_1,\ldots, \psi_k,\gamma_k\in {\bf Fm}$ are such that $\psi_i$ and $\gamma_i$ are logically equivalent for all $i=1,\ldots, k$, then
$$
\vdash_{\lu}\varphi(\psi_1,\ldots,\psi_k)\leftrightarrow\varphi(\gamma_1,\ldots, \gamma_k).
$$
\item[(LDT)] Local deduction theorem: if $\varphi,\psi$ are formulas, then $\varphi\vdash_{\lu}\psi$ iff there exists a positive integer $n$ such that $\vdash_{\lu}\varphi^n\to\psi$.
\end{itemize}
\end{proposition}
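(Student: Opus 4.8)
The plan is to treat the two items separately: (SE) by induction on the structure of the formula $\varphi$, and (LDT) by a double implication, where the right-to-left direction is immediate while the left-to-right one proceeds by induction on the length of a derivation. Both arguments ultimately rest on a small stock of \L-provable schemata that record, at the syntactic level, the algebraic behaviour of the MV-operations; where convenient, one may shortcut their verification by appealing to the standard completeness of \L\ with respect to the MV-algebra on $[0,1]$, since \L\ is algebraizable with MV-algebras as equivalent algebraic semantics.

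For (SE) the base cases are $\varphi=\bot$, where both $\varphi(\psi_1,\ldots,\psi_k)$ and $\varphi(\gamma_1,\ldots,\gamma_k)$ are $\bot$ and $\vdash_{\lu}\bot\leftrightarrow\bot$ holds trivially, and $\varphi=x_i$, where the claim reduces to the hypothesis $\vdash_{\lu}\psi_i\leftrightarrow\gamma_i$. The inductive step has to handle the two primitive connectives $\neg$ and $\oplus$, so the crux is to establish the congruence schemata: if $\vdash_{\lu}\alpha\leftrightarrow\alpha'$ then $\vdash_{\lu}\neg\alpha\leftrightarrow\neg\alpha'$, and if moreover $\vdash_{\lu}\beta\leftrightarrow\beta'$ then $\vdash_{\lu}(\alpha\oplus\beta)\leftrightarrow(\alpha'\oplus\beta')$. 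These follow from (\L1)--(\L4) together with the transitivity and monotonicity laws of $\to$; conceptually they simply say that logical equivalence is a congruence on ${\bf Fm}$, which is precisely what the algebraizability of \L\ guarantees. Composing these congruences along the formation tree of $\varphi$ then yields the statement.

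For (LDT) the right-to-left direction is the easy one: from the \L-theorem $\vdash_{\lu}\chi\to(\xi\to(\chi\odot\xi))$ one obtains, by two applications of (MP), that $\{\chi,\xi\}\vdash_{\lu}\chi\odot\xi$, whence $\varphi\vdash_{\lu}\varphi^n$ for every $n\geq 1$ by iteration; if now $\vdash_{\lu}\varphi^n\to\psi$, a final (MP) gives $\varphi\vdash_{\lu}\psi$. The substantive left-to-right direction is proved by induction on the length of a derivation $\chi_1,\ldots,\chi_m=\psi$ witnessing $\varphi\vdash_{\lu}\psi$, showing that each $\chi_j$ admits an exponent $n_j$ with $\vdash_{\lu}\varphi^{n_j}\to\chi_j$. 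If $\chi_j$ is an axiom then $\vdash_{\lu}\chi_j$, hence $\vdash_{\lu}\varphi\to\chi_j$ by (\L1) and (MP), so $n_j=1$ works; if $\chi_j=\varphi$ then $\vdash_{\lu}\varphi\to\varphi$ again gives $n_j=1$.

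The hard part will be the (MP) step of this induction, which is exactly where the unbounded exponent is forced. Suppose $\chi_j$ comes from $\chi_a$ and $\chi_b=\chi_a\to\chi_j$ with $a,b<j$, and that by the inductive hypothesis $\vdash_{\lu}\varphi^{n_a}\to\chi_a$ and $\vdash_{\lu}\varphi^{n_b}\to(\chi_a\to\chi_j)$. I would combine these as follows: residuation (the interderivability in \L\ of $\gamma\to(\delta\to\varepsilon)$ with $(\gamma\odot\delta)\to\varepsilon$) turns the second into $\vdash_{\lu}(\varphi^{n_b}\odot\chi_a)\to\chi_j$; monotonicity of $\odot$ applied to the first gives $\vdash_{\lu}(\varphi^{n_b}\odot\varphi^{n_a})\to(\varphi^{n_b}\odot\chi_a)$; and transitivity of $\to$ (from (\L2)), together with the provable associativity and commutativity of $\odot$ that give $\varphi^{n_b}\odot\varphi^{n_a}=\varphi^{n_a+n_b}$ up to logical equivalence, yields $\vdash_{\lu}\varphi^{n_a+n_b}\to\chi_j$. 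Thus $n_j=n_a+n_b$ suffices, and reading off $n_m$ at the end of the derivation finishes the argument. The only genuine work lies in certifying the handful of MV-schemata just invoked (residuation, monotonicity and associativity/commutativity of $\odot$, transitivity of $\to$); each is derivable directly from (\L1)--(\L4) or, more quickly, checked to be valid on $[0,1]$ and imported via standard completeness. It is precisely the non-idempotency of $\odot$ that explains why the single hypothesis $\varphi$ must be replaced by a growing power $\varphi^{n}$.
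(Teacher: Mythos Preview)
Your proof is correct and is essentially the standard argument for both items. Note, however, that the paper does not actually prove this proposition: it is stated in the preliminaries as a well-known fact about \L\ (with the local deduction theorem later attributed to \cite[Theorem 1.7]{MuAdvanced}), so there is no ``paper's own proof'' to compare against. Your inductive treatment of (SE) via the congruence property of logical equivalence, and of (LDT) via induction on derivation length with the additive combination of exponents at the (MP) step, is exactly the textbook route; the appeal to algebraizability or standard completeness to certify the auxiliary MV-schemata is a legitimate and common shortcut.
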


\luk\ logic is  algebraizable in the sense of Blok and Pigozzi \cite{BP89}, and its equivalent algebraic semantics is the variety $\mathsf{MV}$ of MV-algebras, as introduced by Chang in \cite{Chang} (see also \cite{CDM} for an exhaustive treatment). 
Due to this, we will not distinguish the propositional language of the logic from the algebraic signature of MV-algebras. 
\begin{definition} 
An {\em MV-algebra} is a system ${\bf A}=(A, \oplus, \neg, \bot)$ of type $(2,1,0)$, where the following properties hold:
\begin{itemize}
\item[(MV1)] $(A,\oplus, \bot)$ is a commutative monoid,
\item[(MV2)] $\neg\neg x=x$,
\item[(MV3)] $\neg(\neg x\oplus y)\oplus y=\neg(\neg y\oplus x)\oplus x$.
\end{itemize} 
\end{definition}

In  every MV-algebra ${\bf A}$  
one can define further operations and constants on $\alg A$ according to the above identification.  MV-algebra are well behaved with respect to their structure theory, see e.g. \cite[\S 1]{CDM}. In particular, congruences correspond to {\em filters}, that is, nonempty subsets of the domain closed under $\odot$ and upwards. Precisely, for every MV-algebra $\alg A$, the congruence lattice $Con(\alg A)$ is isomorphic to the filter lattice $Fil(\alg A)$ by the following maps:

 \begin{eqnarray}
 \theta\in Con(\alg A)&\longmapsto& F_\theta=\{x\in A\mid (x, \top)\in \theta\}\in Fil(\alg A)\mbox{ and }\\\label{eqFilt1}
F\in Fil(\alg A)&\longmapsto&  \theta_F=\{(x, y)\in A\times A\mid (x\to y), (y\to x)\in F\}\in Con(\alg A).\label{eqFilt2}
\end{eqnarray}
Moreover, finitely generated filters are principally generated by the meet of their finitely many generators. Thus, finitely generated congruences are also principal. As usual in an algebraic setting, we blur the distinction between finitely presentable and finitely presented algebras. Therefore we say that 
an MV-algebra is {\em finitely presented} if it is isomorphic to the quotient of a free finitely generated MV-algebra by a finitely generated, and hence principal, congruence $\theta$.

 {\em Valuations} of \luk\ language in an MV-algebra ${\bf A}$ with support $A$, are functions $e$ mapping propositional variables to $A$ and commuting on each connective and constant.
{\em Tautologies} are those formulas that evaluate to $\top:=\neg \bot$ under every valuation.

Chang's completeness theorem shows that \L\ is sound and complete with respect to the so called {\em standard} MV-algebra, the structure $[0,1]_{MV}=([0,1], \oplus, \neg, 0)$ where, for all  $x,y\in [0,1]$, $x\oplus y=\min\{1, x+y\}$ and $\neg x=1-x$. In algebraic terms, $[0,1]_{MV}$ generates the variety $\mathsf{MV}$. Indeed, the standard MV-algebra generates the variety of MV-algebras as both a variety and a quasivariety \cite[Corollary 7.2]{GiMu}. This fact allows a remarkable characterization of free algebras in $\mathsf{MV}$ that we will recall in the next subsection. 

\subsection{Free and finitely presented MV-algebras}\label{sec22}
MV-algebras form a variety, i.e., an equational class. Therefore, by Birkhoff Theorem \cite{BS}, free MV-algebras exist in $\mathsf{MV}$. If $X$ is any set we will denote by $\free(X)$ the MV-algebra freely generated by $X$.  In the rest of this paper, we will always assume $X$ to be finite and non-empty. 
\begin{remark}\label{remFree}
Every free algebra $\free(X)$ in a variety $\mathsf{V}$ is characterized by the well-known universal property: for every algebra ${\bf A}\in \mathsf{V}$ and every function $f:X\to A$, there exists a unique homomorphism $h_f:\free(X)\to{\bf A}$ that extends $f$. Thus, whenever $X$ has finite cardinality, say $|X|=k$, we will denote by $\free(k)$ the free algebra on variables $\{x_1,\ldots, x_k\}$. The map $f$ that bijectively maps the elements of $X$ to the set of variables $\{x_1,\ldots, x_k\}$ gives an isomorphism between $\free(X)$ and $\free(k)$. Such an identification of elements of a set $X$ to variables from a set of $|X|$ elements will be largely used along this paper.
\end{remark}
By a common universal algebraic argument and since $[0,1]_{MV}$ generates $\mathsf{MV}$, for every finite $k$, $\free(k)$ is isomorphic to the subalgebra of the MV-algebra $[0,1]^{[0,1]^k}$ of functions from $[0,1]^k$ to $[0,1]$ generated by the projection maps, and operations defined pointwise by those on the standard MV-algebra. Furthermore, recall that for all finite $k$, $\free(k)$ is, up to isomorphism, the Lindenbaum-Tarski algebra ${\bf L}(k)$ of \luk\ logic on formulas from ${\bf Fm}(k)$. 

 McNaughton Theorem provides us with a clear geometric characterization of finitely generated free MV-algebras and hence  a {\em functional representation} of (equivalence classes, modulo logical equivalence, of) formulas of each Lindenbaum-Tarski algebra ${\bf L}(k)$. Recall that a function $f:[0,1]^k\to[0,1]$ is named a {\em McNaughton function} if it is continuous, piecewise linear and such that each piece has  integer coefficients. For every positive integer $k$, $\mathcal{M}(k)$ denotes the MV-algebra  of McNaughton functions on $[0,1]^k$ with pointwise operations as in $[0,1]_{MV}$.
\begin{theorem}[\cite{McN,MundConstructive}]\label{Them:ISO1}
For every positive integer $k$, $\free(k)$, ${\bf L}(k)$ and the algebra $\mathcal{M}(k)$  are isomorphic.  
\end{theorem}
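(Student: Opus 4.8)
The plan is to split the statement into the two isomorphisms $\free(k)\cong {\bf L}(k)$ and $\free(k)\cong\mathcal{M}(k)$, treating the first as the routine algebraizability fact already recorded in the excerpt (the Lindenbaum--Tarski algebra of an algebraizable logic on $k$ variables is the free algebra on $k$ generators) and concentrating all the work on the second. To set up the second, I would first produce a canonical comparison map. By the universal property of the free algebra (Remark \ref{remFree}), the assignment $x_i\mapsto\pi_i$, where $\pi_i\colon[0,1]^k\to[0,1]$ is the $i$-th coordinate projection, extends uniquely to a homomorphism $\beta\colon\free(k)\to[0,1]^{[0,1]^k}$. Its image is exactly the subalgebra generated by the projections, which the excerpt already identifies with $\free(k)$ itself; hence $\beta$ is injective onto its image. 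It then remains to show that this image is \emph{precisely} $\mathcal{M}(k)$, which I would do by a double inclusion.

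The easy inclusion, that every term function is a McNaughton function, I would prove by induction on the structure of terms. Each projection $\pi_i$ is continuous, piecewise linear, and has integer coefficients, and the class of such functions is closed under the MV-operations: $\neg f=1-f$ and $f\oplus g=\min\{1,f+g\}$ preserve continuity, glue finitely many linear pieces into finitely many, and keep coefficients integral. Thus $\mathrm{Im}(\beta)\subseteq\mathcal{M}(k)$.

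The hard inclusion, surjectivity of $\beta$ onto $\mathcal{M}(k)$, is the substance of McNaughton's theorem. Given $f\in\mathcal{M}(k)$, there is a finite family of affine functions $\ell_1,\dots,\ell_m$ with integer coefficients and a polyhedral subdivision of $[0,1]^k$ on whose cells $f$ agrees with some $\ell_j$. The target is a lattice normal form: first show that each clamped piece $\hat\ell_j:=(\ell_j\vee 0)\wedge 1$ is a term function---this reduces to representing a single integer-coefficient affine function using the generators, the constants, and truncated addition and subtraction---and then recover $f$ as a finite combination $\bigvee_i\bigwedge_j\hat\ell_j$, read off cell by cell from where $f$ dominates or is dominated by each piece. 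Since $\vee$, $\wedge$, $\odot$ and the truncations are all definable in the MV-signature, the resulting expression is a term with $\beta(t)=f$. Combining the two inclusions gives $\mathrm{Im}(\beta)=\mathcal{M}(k)$, so $\beta$ is an isomorphism $\free(k)\cong\mathcal{M}(k)$, and composing with $\free(k)\cong{\bf L}(k)$ yields the full statement.

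The main obstacle is this hard inclusion, and within it the step that simultaneously (a) represents an arbitrary integer-coefficient affine function clamped to $[0,1]$ as a term---the integrality of the coefficients is essential and is exactly what forces the use of $\odot$ and truncated subtraction rather than genuine scalar multiplication---and (b) verifies that the global min--max bookkeeping over the subdivision reconstructs $f$ everywhere, including along the boundaries between cells where the continuity of $f$ must be matched by the lattice polynomial. This is the combinatorial-geometric heart of the result; the cleanest route is Mundici's constructive proof \cite{MundConstructive}, which arranges the subdivision so that the correctness of the lattice formula is transparent.
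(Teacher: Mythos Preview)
The paper does not prove this theorem at all: it is stated with the citation \cite{McN,MundConstructive} and immediately used, with no proof environment following it. There is therefore nothing in the paper to compare your proposal against.

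That said, your sketch is a faithful outline of the classical argument: the identification $\free(k)\cong{\bf L}(k)$ is the standard Lindenbaum--Tarski construction for an algebraizable logic, and for $\free(k)\cong\mathcal{M}(k)$ you correctly set up the comparison homomorphism via the universal property, argue injectivity from the fact that $[0,1]_{MV}$ generates $\mathsf{MV}$ as a quasivariety (which the paper records just before the theorem), handle the easy inclusion by structural induction, and isolate the substantive step---representing an arbitrary McNaughton function as a lattice polynomial in truncated integer-affine pieces---as the content of \cite{McN,MundConstructive}. One small wording issue: when you write ``Its image is exactly the subalgebra generated by the projections, which the excerpt already identifies with $\free(k)$ itself; hence $\beta$ is injective,'' you are invoking part of the very isomorphism you are proving. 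It is cleaner to argue injectivity directly from the quasivariety-generation fact (any nontrivial identity refuted in some MV-algebra is refuted in $[0,1]_{MV}$, hence by some evaluation in $[0,1]^k$), and then identify the image with $\mathcal{M}(k)$ by your double inclusion.
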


The above theorem hence tells us that, for every formula $\varphi\in {\bf Fm}(k)$, its equivalence class $[\varphi]$ in ${\bf L}(k)$ can be regarded, up to isomorphism, as a McNaughton function $f_\varphi:[0,1]^k\to[0,1]$. Conversely, for every McNaughton function $f\in \mathcal{M}(k)$ there is a (not unique) formula $\varphi$, such that $[\varphi]$ is mapped to $f$ by the isomorphism between $\mathcal{M}(k)$ and ${\bf L}(k)$.

The next proposition recalls known facts concerning rational polyhedra and {\em onesets} of McNaughton functions. Remind that  a  {\em (rational) polytope} of $\mathbb{R}^k$ is the convex hull of finitely many points of $\mathbb{R}^k$ ($\mathbb{Q}^k$ respectively); a (rational) polyhedron is  a finite union of (rational) polytopes. Moreover, for every $k$ and for every McNaughton function $f\in \mathcal{M}(k)$, the {\em oneset} of $f$ is ${\rm O}(f)=\{x\in [0,1]^k\mid f(x)=1\}$. 
\begin{proposition}[{\cite[Theorem 3.20]{MuAdvanced}}]\label{PropPolyForm}
{\rm (1)} For every rational polyhedron $\mathscr{P}\subseteq[0,1]^k$, there exists  $\chi_\mathscr{P}\in {\bf Fm}(k)$  such that the McNaughton function $f_{\chi_\mathscr{P}}$ satisfies ${\rm O}(f)=\mathscr{P}$.

{\rm (2)} For every $\varphi\in {\bf Fm}(k)$, ${\rm O}(f_\varphi)$ is a rational polyhedron of $[0,1]^k$. 

{\rm (3)} For every pair of formulas $\varphi,\psi\in {\bf Fm}(k)$, $\varphi\vdash_{\lu}\psi$ iff ${\rm O}(f_\varphi)\subseteq {\rm O}(f_\psi)$ as rational polyhedra.
\end{proposition}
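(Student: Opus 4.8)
The plan is to prove all three items by working throughout in the functional representation supplied by Theorem~\ref{Them:ISO1}, identifying each formula $\varphi\in{\bf Fm}(k)$ with its McNaughton function $f_\varphi\in\mathcal{M}(k)$ and exploiting that such functions are continuous and piecewise linear on a finite subdivision of the compact cube $[0,1]^k$ into rational polytopes. Each statement then reduces to elementary piecewise linear geometry, the only genuinely delicate point being the nontrivial direction of item~(3).

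For item~(1) I would first reduce to a single rational polytope: writing $\mathscr{P}=\bigcup_j Q_j$ as a finite union of rational polytopes and using that the oneset of a join is the union of the onesets, it suffices to realize each $Q_j$. Every rational polytope $Q$ is an intersection of finitely many rational half-spaces $\{x:\ell_i(x)\le b_i\}$ with $[0,1]^k$, and clearing denominators I may take the $\ell_i,b_i$ with integer coefficients. For each $i$ the function $g_i=1\wedge(0\vee(\ell_i-b_i))$ is a McNaughton function with $g_i(x)=0$ iff $\ell_i(x)\le b_i$, so $h_i=\neg g_i$ has oneset exactly $\{\ell_i\le b_i\}\cap[0,1]^k$. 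Then $\chi_Q=\bigwedge_i h_i$ has oneset $\bigcap_i{\rm O}(f_{h_i})=Q$, and $\chi_\mathscr{P}=\bigvee_j\chi_{Q_j}$ realizes $\mathscr{P}$. Item~(2) is the converse bookkeeping: since $f_\varphi$ is piecewise linear with integer coefficients, there is a finite subdivision of $[0,1]^k$ into rational polytopes $P_1,\ldots,P_r$ on each of which $f_\varphi$ agrees with an affine map $\ell_s$ of integer coefficients; then ${\rm O}(f_\varphi)\cap P_s=P_s\cap\{x:\ell_s(x)=1\}$ is the intersection of a rational polytope with a rational hyperplane, hence a rational polytope, and ${\rm O}(f_\varphi)=\bigcup_s({\rm O}(f_\varphi)\cap P_s)$ is a rational polyhedron.

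For item~(3) I would start from the local deduction theorem (LDT) together with Theorem~\ref{Them:ISO1}: $\varphi\vdash_{\lu}\psi$ holds iff $\vdash_{\lu}\varphi^n\to\psi$ for some $n$, which in $\mathcal{M}(k)$ means $f_{\varphi^n}\le f_\psi$ pointwise on $[0,1]^k$. Setting $p=1-f_\varphi$ and $q=1-f_\psi$, a short computation using $f_{\varphi^n}=\max(0,\,nf_\varphi-(n-1))$ shows that $f_{\varphi^n}\le f_\psi$ holds everywhere iff $q\le n\,p$ on $[0,1]^k$, whereas ${\rm O}(f_\varphi)\subseteq{\rm O}(f_\psi)$ says precisely that $\{p=0\}\subseteq\{q=0\}$. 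The forward implication is then immediate, since $q\le np$ and $p(x)=0$ force $q(x)=0$.

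The converse—extracting a uniform multiplier $n$ with $q\le np$ from the mere inclusion of zero sets—is the main obstacle, as it fails for general continuous functions. Here I would use the piecewise linear structure decisively: fix a common simplicial complex on which both $p$ and $q$ are affine, and on each simplex with vertices $v_0,\ldots,v_d$ note that $q$ restricts to the convex combination $\sum_i\lambda_i q(v_i)$, and likewise $p$ to $\sum_i\lambda_i p(v_i)$. The inclusion $\{p=0\}\subseteq\{q=0\}$ forces $q(v_i)=0$ whenever $p(v_i)=0$, so $n_\sigma=\max\{\lceil q(v_i)/p(v_i)\rceil : p(v_i)>0\}$ gives $q(v_i)\le n_\sigma\,p(v_i)$ at every vertex, hence $q\le n_\sigma\,p$ on the whole simplex by convexity; taking $n$ to be the maximum of the finitely many $n_\sigma$ finishes the argument. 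This vertex-wise estimate is the piecewise linear incarnation of a Łojasiewicz-type inequality with exponent one, and it is exactly what rescues the implication that would otherwise fail in a smooth setting.
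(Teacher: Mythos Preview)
Your argument is correct in all three parts. For (1) and (2) the constructions are the standard ones, and your treatment of the converse in (3) via a common linearizing simplicial complex and the vertex-wise bound $q(v_i)\le n_\sigma\,p(v_i)$ is exactly the right piecewise-linear substitute for a \L ojasiewicz inequality; the edge case where $p$ vanishes at every vertex of a simplex is harmless, since then $q$ vanishes there too and the inequality holds trivially.

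There is, however, nothing to compare against: the paper does not supply its own proof of this proposition. It is stated with the attribution \cite[Theorem 3.20]{MuAdvanced} and used as a black box throughout. What you have written is essentially a self-contained reconstruction of that result, relying on Theorem~\ref{Them:ISO1} for the hard direction of McNaughton's theorem (to know that each truncated affine function $1\wedge(0\vee(\ell_i-b_i))$ is indeed represented by a formula) and on the local deduction theorem recorded in Proposition~\ref{propBasicPropLuk}. Both of these are available in the paper's preliminaries, so your proof is fully grounded in what the paper already assumes.
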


The proposition above allows for a geometrical representation of principal filters and congruences of free finitely generated MV-algebras. Indeed,
the principal filter $F$  of $\free(k)$ generated by $f$, and hence the principal congruence $\theta_F$, correspond by the above proposition to the rational polyhedron ${\rm O}(f)$. Vice versa, given a rational polyhedron $\mathscr{P}\subseteq[0,1]^k$, $F_{\mathscr{P}}=\{g\in \free(k)\mid {\rm O}(g)\supseteq \mathscr{P}\}$ is the filter of $\free(k)$ principally generated by any $f$ such that ${\rm O}(f)=\mathscr{P}$.

Therefore, finitely generated quotients of finitely generated free algebras correspond to  rational polyhedra. In particular, an MV-algebra is finitely presented iff it is  isomorphic to an algebra $\mathcal{M}(\mathscr{P})$ of McNaughton functions over a cube $[0,1]^k$, restricted to a rational polyhedron $\mathscr{P}$ \cite[Theorem 6.3]{MuAdvanced}.  
Thus, we will adopt the following notation.
\begin{notation}
Let $\theta$ be a finitely generated congruence of a finitely generated free MV-algebra, say, $\free(k)$ and let $\mathscr{P}$ be the rational polyhedron of $[0,1]^k$ corresponding to $\theta$. Then, we will henceforth denote the finitely presented MV-algebra $\free(k)/\theta$ by $\free(k)/\mathscr{P}$ without danger of confusion.  
\end{notation}

The last result we will prove in this subsection is meant to extend \cite[Theorem 6.3]{MuAdvanced} to finitely presented (not necessarily free) MV-algebras. First, we need the following.

\begin{proposition}\label{propFOIntersection}
Let $\mathscr{P}, \mathscr{Q}\subseteq [0,1]^k$ be rational polyhedra. Let $\hat{\theta}_\mathscr{Q}$ be the congruence of $\free(k)/\mathscr{P}$ generated by the pairs $([a]_{\mathscr{P}}, [b]_{\mathscr{P}})$ such that $(a,b)$ is a generator of (the congruence associated to) $\mathscr{Q}$. Then, 
$$
(\free(k)/\mathscr{P})/\hat{\theta}_{\mathscr{Q}}\cong\free(k)/(\mathscr{P}\cap\mathscr{Q}).
$$
\end{proposition}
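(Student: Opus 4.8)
The plan is to reduce the statement to the standard correspondence and third isomorphism theorems of universal algebra, and then to read off the geometric content of the relevant join of congruences through the filter--polyhedron dictionary recalled above.

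First I would fix notation at the level of $\free(k)$. Let $\theta_\mathscr{P}$ and $\theta_\mathscr{Q}$ be the finitely generated congruences of $\free(k)$ associated with $\mathscr{P}$ and $\mathscr{Q}$, and let $F_\mathscr{P}, F_\mathscr{Q}\in Fil(\free(k))$ be the corresponding filters, principally generated by functions $f_\mathscr{P}, f_\mathscr{Q}$ with ${\rm O}(f_\mathscr{P})=\mathscr{P}$ and ${\rm O}(f_\mathscr{Q})=\mathscr{Q}$ (Proposition \ref{PropPolyForm}). Write $q\colon\free(k)\to\free(k)/\mathscr{P}$ for the canonical quotient map, so that $\free(k)/\mathscr{P}=\free(k)/\theta_\mathscr{P}$.

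The core step is to identify $\hat\theta_\mathscr{Q}$. By the correspondence theorem, the congruences of $\free(k)/\theta_\mathscr{P}$ are in bijection with the congruences of $\free(k)$ containing $\theta_\mathscr{P}$, and the congruence generated by a family of image pairs $(q(a_i),q(b_i))$ corresponds to the smallest congruence of $\free(k)$ containing both $\theta_\mathscr{P}$ and the pairs $(a_i,b_i)$. Applying this to the generating pairs of $\theta_\mathscr{Q}$ shows that $\hat\theta_\mathscr{Q}$ corresponds to $\theta_\mathscr{P}\vee\theta_\mathscr{Q}$, whence by the third isomorphism theorem $(\free(k)/\mathscr{P})/\hat\theta_\mathscr{Q}\cong\free(k)/(\theta_\mathscr{P}\vee\theta_\mathscr{Q})$. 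Checking this identification cleanly --- in particular that generating $\hat\theta_\mathscr{Q}$ from the images of the generators of $\theta_\mathscr{Q}$ yields exactly the image of the join, and not some larger or smaller congruence --- is the step I expect to require the most care; I would carry it out through the filter description of congruences, which is well behaved under quotients.

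It then remains to compute the polyhedron attached to $\theta_\mathscr{P}\vee\theta_\mathscr{Q}$. Under the lattice isomorphism $Con(\free(k))\cong Fil(\free(k))$ the join corresponds to the filter generated by $F_\mathscr{P}\cup F_\mathscr{Q}$; since finitely generated filters are principal and generated by the meet of their generators, this filter is principally generated by $f_\mathscr{P}\wedge f_\mathscr{Q}$. Computing onesets pointwise over the standard algebra gives ${\rm O}(f_\mathscr{P}\wedge f_\mathscr{Q})={\rm O}(f_\mathscr{P})\cap{\rm O}(f_\mathscr{Q})=\mathscr{P}\cap\mathscr{Q}$, because $\min\{x,y\}=1$ in $[0,1]_{MV}$ exactly when $x=y=1$. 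As $\mathscr{P}\cap\mathscr{Q}$ is again a rational polyhedron, Proposition \ref{PropPolyForm} identifies $\theta_\mathscr{P}\vee\theta_\mathscr{Q}$ with the congruence associated to $\mathscr{P}\cap\mathscr{Q}$, so that $\free(k)/(\theta_\mathscr{P}\vee\theta_\mathscr{Q})=\free(k)/(\mathscr{P}\cap\mathscr{Q})$. Chaining this equality with the isomorphism of the previous paragraph yields the claim.
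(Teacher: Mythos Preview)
Your argument is correct, and it takes a genuinely different route from the paper's. You work abstractly: the correspondence theorem identifies $\hat\theta_\mathscr{Q}$ with $(\theta_\mathscr{P}\vee\theta_\mathscr{Q})/\theta_\mathscr{P}$, the third isomorphism theorem gives $(\free(k)/\mathscr{P})/\hat\theta_\mathscr{Q}\cong\free(k)/(\theta_\mathscr{P}\vee\theta_\mathscr{Q})$, and then the filter--oneset dictionary reads off $\theta_\mathscr{P}\vee\theta_\mathscr{Q}$ as the congruence attached to $\mathscr{P}\cap\mathscr{Q}$. The only point worth tightening is the last identification: you should note that the principal filter generated by $f_\mathscr{P}\wedge f_\mathscr{Q}$ really is $\{g:\oneset(g)\supseteq\mathscr{P}\cap\mathscr{Q}\}$, which is exactly what Proposition~\ref{PropPolyForm}(3) gives (equivalently, $\langle f\rangle=\{g:\oneset(f)\subseteq\oneset(g)\}$, as the paper observes just after that proposition).

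The paper instead works concretely through the functional model $\mathcal{M}(\mathscr{P})$ of $\free(k)/\mathscr{P}$: it transports $\hat\theta_\mathscr{Q}$ to the filter of $\mathcal{M}(\mathscr{P})$ generated by $g_\rest$ (for $g$ a generator of $F_\mathscr{Q}$), proves directly that this filter is $\{f_\rest:\oneset(f_\rest)\supseteq\mathscr{Q}\cap\mathscr{P}\}$, and then builds the isomorphism $\mathcal{M}(\mathscr{P})/\hat{\mathscr{Q}}\to\mathcal{M}(\mathscr{P}\cap\mathscr{Q})$ by hand via $[f_\rest]\mapsto f_{\upharpoonright_{\mathscr{P}\cap\mathscr{Q}}}$, checking well-definedness, injectivity, surjectivity, and the homomorphism property explicitly. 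Your approach is shorter and more conceptual, delegating the bookkeeping to standard universal algebra; the paper's approach is more self-contained and makes the isomorphism visible as a restriction map, which can be useful downstream when one wants to compute with it.
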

\begin{proof}
From \cite[Theorem 6.3]{MuAdvanced}, $\free(k)/\mathscr{P}\cong \mathcal{M}(\mathscr{P})$ via an isomorphism $\iota$ sending, for every McNaughton function $f:[0,1]^k\to[0,1]$ the equivalence class $[f]_\mathscr{P}$ in $\free(k)/\mathscr{P}$ to the restriction $f_\rest$ of $f$ to $\mathscr{P}$. 
Now, since finitely generated congruences and filters are principal in MV-algebras, also the filter associated to $\scr{Q}$ is principal, and therefore generated by some $g\in \free(k)$, such that ${\rm O}(g)=\mathscr{Q}$. 
Let now $\hat{\mathscr{Q}}$ be the congruence of $\mathcal{M}(\mathscr{P})$ corresponding to $\hat{\theta}_\mathscr{Q}$ via $\iota$. Thus, its associated filter $F$ is generated by $g_\rest\in \mathcal{M}(\mathscr{P})$.

\begin{claim}\label{claim1Rest}
For every McNaughton function $f:[0,1]^k\to[0,1]$, $f\rest\in F$ iff ${\rm O}(g_\rest)\subseteq{\rm O}(f_\rest)$.
\end{claim}
Indeed, if $f_\rest\in F$, then there exists $n$ such that $(g_\rest)^n\leq f_\rest$. Therefore, if $g_\rest(x)=1$, $(g_\rest)^n(x)=1$ and hence $x\in {\rm O}(f_\rest)$. Conversely, assume that ${\rm O}(g_\rest)\subseteq{\rm O}(f_\rest)$, then a slight modification of \cite[Lemma 2.2(i)]{Kroupa12} shows that for some $n$, $(g_\rest)^n\leq f_\rest$ setting the claim.

Now, notice that
\begin{equation}\label{eqRestInt}
{\rm O}(g_\rest)={\rm O}(g)\cap \mathscr{P}=\mathscr{Q}\cap\mathscr{P}. 
\end{equation}
Therefore,  Claim \ref{claim1Rest} becomes 
\begin{equation}\label{eqRestInt2}
f_\rest\in F\mbox{ iff }\mathscr{Q}\cap \mathscr{P}\subseteq {\rm O}(f)\cap \mathscr{P}. 
\end{equation}
Now, we prove that $\mathcal{M}(\mathscr{P})/\hat{\mathscr{Q}}\cong \mathcal{M}(\mathscr{Q}\cap\mathscr{P})$ via the map 
$$
\lambda: [f_\rest]_{\hat{\mathscr{Q}}}\mapsto f_{\upharpoonright_{\mathscr{Q}\cap\mathscr{P}}}.
$$
Let us start showing that $\lambda$ is well-defined, and take $[f_\rest]_{\hat{\mathscr{Q}}}= [h_\rest]_{\hat{\mathscr{Q}}}$. By (\ref{eqFilt2}) this holds iff $f_\rest \to h_\rest\in F$ and $h_\rest\to f_\rest\in F$ iff, by the definition of operations on quotients, $(f\to h)_\rest\in F$ and $(h\to f)_\rest\in F$. By (\ref{eqRestInt2}) the latter is the case iff $\mathscr{Q}\cap\mathscr{P}\subseteq {\rm O}(f\to h)\cap\mathscr{P}$ and $\mathscr{Q}\cap\mathscr{P}\subseteq {\rm O}(h\to f)\cap\mathscr{P}$ iff for all $x\in \mathscr{Q}\cap\mathscr{P}$, $(f\to h)(x)=f(x)\to h(x)=1$ and $(h\to f)(x)=h(x)\to f(x)=1$ iff for all $x\in \mathscr{Q}\cap\mathscr{P}$, $f(x)\leq h(x)$ and $h(x)\leq f(x)$ iff $f_{\upharpoonright_{\mathscr{Q}\cap\mathscr{P}}}=h_{\upharpoonright_{\mathscr{Q}\cap\mathscr{P}}}$ that is to say, $\lambda([f_\rest]_{\hat{\mathscr{Q}}})=\lambda([h_\rest]_{\hat{\mathscr{Q}}})$.

Notice that the above argument (read backwards) also shows that $\lambda$ is injective. Surjectivity is also clear. To finish the proof, we hence need to prove that $\lambda$ is a homomorphism. Let us show that $\lambda$ commutes w.r.t. $\oplus$, the other cases will follow by a similar argument. Let us notice that $\lambda([f_\rest]_{\hat{\mathscr{Q}}}\oplus [h_\rest]_{\hat{\mathscr{Q}}})=\lambda([f_\rest\oplus h_\rest]_{\hat{\mathscr{Q}}})=\lambda([(f\oplus h)_\rest]_{\hat{\mathscr{Q}}})=(f\oplus h)_{\upharpoonright_{\mathscr{Q}\cap\mathscr{P}}}=f_{\upharpoonright_{\mathscr{Q}\cap\mathscr{P}}}\oplus h_{\upharpoonright_{\mathscr{Q}\cap\mathscr{P}}}=\lambda([f_\rest]_{\hat{\mathscr{Q}}})\oplus\lambda([h_\rest]_{\hat{\mathscr{Q}}})$. 
\end{proof}
The previous result allows to prove the following general fact.
\begin{corollary}
Every finitely generated quotient of a finitely presented MV-algebra is finitely presented. 
\end{corollary}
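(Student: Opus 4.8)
The plan is to reduce everything to the concrete description of finitely presented MV-algebras and then invoke Proposition \ref{propFOIntersection}. First I would write an arbitrary finitely presented MV-algebra $\mathbf{B}$, up to isomorphism, as $\free(k)/\mathscr{P}$ for a suitable $k$ and a rational polyhedron $\mathscr{P}\subseteq[0,1]^k$, which is legitimate by \cite[Theorem 6.3]{MuAdvanced} and the notation fixed above. A finitely generated quotient of $\mathbf{B}$ is then $\mathbf{B}/\hat{\theta}$ for a finitely generated congruence $\hat{\theta}$ of $\free(k)/\mathscr{P}$; recalling that in MV-algebras finitely generated congruences are principal, I may fix finitely many generating pairs $([a_i]_\mathscr{P},[b_i]_\mathscr{P})$, $i=1,\dots,m$, for $\hat{\theta}$.

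The heart of the argument is to manufacture a rational polyhedron $\mathscr{Q}$ so that $\hat{\theta}$ becomes the congruence $\hat{\theta}_\mathscr{Q}$ appearing in Proposition \ref{propFOIntersection}. To this end I would lift each pair to a pair $(a_i,b_i)$ in $\free(k)$ and consider the congruence $\theta$ of $\free(k)$ they generate. Being finitely generated, $\theta$ is principal, and hence, by the correspondence between principal congruences of $\free(k)$ and rational polyhedra recorded after Proposition \ref{PropPolyForm}, it is the congruence associated to a rational polyhedron $\mathscr{Q}\subseteq[0,1]^k$ whose generators may be taken to be precisely the $(a_i,b_i)$. With this choice the congruence of $\free(k)/\mathscr{P}$ generated by the images of the generators of $\theta_\mathscr{Q}$ is exactly $\hat{\theta}$, i.e.\ $\hat{\theta}=\hat{\theta}_\mathscr{Q}$.

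Once this is in place, Proposition \ref{propFOIntersection} yields
$$
\mathbf{B}/\hat{\theta}\;\cong\;(\free(k)/\mathscr{P})/\hat{\theta}_\mathscr{Q}\;\cong\;\free(k)/(\mathscr{P}\cap\mathscr{Q}).
$$
Since the intersection of two rational polyhedra of $[0,1]^k$ is again a rational polyhedron, the right-hand side is of the form $\free(k)/\mathscr{R}$ with $\mathscr{R}=\mathscr{P}\cap\mathscr{Q}$ a rational polyhedron, hence finitely presented, which proves the claim.

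I expect the only genuinely delicate point to be the identification $\hat{\theta}=\hat{\theta}_\mathscr{Q}$: a priori $\hat{\theta}_\mathscr{Q}$ is defined through \emph{some} generating set of $\theta_\mathscr{Q}$, so I must check that the congruence generated in the quotient $\free(k)/\mathscr{P}$ does not depend on the chosen generating set. This is the standard correspondence theorem in disguise: congruences of $\free(k)/\mathscr{P}=\free(k)/\theta_\mathscr{P}$ correspond to congruences of $\free(k)$ containing $\theta_\mathscr{P}$, and the congruence generated in the quotient by the images of a set $S$ corresponds to $\theta_\mathscr{P}\vee\langle S\rangle$, which depends only on $\langle S\rangle=\theta_\mathscr{Q}$ and not on $S$. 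Verifying this bookkeeping, together with the observation that our pairs $(a_i,b_i)$ indeed form a generating set of $\theta_\mathscr{Q}$ by construction, is the one place where care is needed; the remaining steps are immediate applications of the already established results.
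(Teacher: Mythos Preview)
Your proposal is correct and follows essentially the same route as the paper: write the algebra as $\free(k)/\mathscr{P}$, lift generators of the finitely generated congruence to $\free(k)$ to obtain a rational polyhedron $\mathscr{Q}$, apply Proposition~\ref{propFOIntersection}, and conclude because $\mathscr{P}\cap\mathscr{Q}$ is again a rational polyhedron. The only cosmetic difference is that the paper exploits the filter--congruence correspondence to work with a \emph{single} principal generator $[g]_\mathscr{P}$ of the associated filter and sets $\mathscr{Q}=\mathrm{O}(g)$ directly, which spares you the bookkeeping about independence from the chosen generating set that you (correctly) handle via the correspondence theorem.
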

\begin{proof}
Let ${\bf A}=\free(k)/\mathscr{P}$ be finitely presented and let $\theta$ be a finitely generated congruence of ${\bf A}$. Let $[g]_\mathscr{P}$ be a generator of the filter associated to $\theta$ with $g\in \free(k)$. Let now $\mathscr{Q}={\rm O}(g)$. Then we can apply Proposition \ref{propFOIntersection} and get that ${\bf A}/\theta\cong\free(k)/(\mathscr{P}\cap\mathscr{Q})$. Therefore, since the intersection of polyhedra is a polyhedron, ${\bf A}/\theta$ is finitely presented.
\end{proof}

\begin{notation}[Events]
Starting from next subsection, we will be concerned with uncertainty quantification on MV-algebras and, in particular, on free MV-algebras. Adhering to a standard notation, formulas of \luk\ language will be hence called {\em events}. Moreover, thanks to Theorem \ref{Them:ISO1}, we will sometimes identify a formula $\varphi$ with its associated McNaughton function $f_\varphi$. With no danger of confusion, we will refer to both these expressions as to the {\em event $\varphi$} or the {\em event $f_\varphi$}.
\end{notation}

\subsection{The logic $\FPL$ and states on MV-algebras}\label{sec23}

The language of $\FPL$ is obtained by expanding that of \luk\ logic (recall Section~\ref{sec21}) by a unary modality $P$. The set of formulas, denoted by ${\bf PFm}$, is made of the following  two classes:
\vspace{.2cm}

\noindent (EF): the set of {\em event} formulas which contains all formulas of \luk\ language; these formulas will be denoted, as above, by lowercase Greek letters $\varphi,\psi,\ldots$ with possible subscripts; 
\vspace{.1cm}

\noindent (MF): the set of {\em modal} formulas which contains atomic modal formulas, i.e.,  expressions of the form $P(\varphi)$ for every event formula $\varphi$, the constants $\top$ and $\bot$ and which is closed under the connectives of \luk\ language. Modal formulas will be denoted by uppercase Greek letters $\Phi,\Psi,\ldots$ with possible subscripts.
\vspace{.2cm}

Notice that modal formulas in ${\bf PFm}$ are just MV-terms written using atomic modal formulas (thought) as variables. That is, every (compound) modal formula $\Phi$ is of the form $t[P(\varphi_1),\ldots, P(\varphi_k)]$ where $t[x_1,\ldots, x_k]$ is an MV-term on $k$ variables and $P(\varphi_1),\ldots, P(\varphi_k)$ are atomic modal formulas. Indeed, modal formulas of $\FPL$ can be regarded as having two layers: an {\em inner layer} and an {\em outer layer}. The former concerns with the inner atomic modal formulas like the above $P(\varphi_1),\ldots, P(\varphi_k)$ and it is about the probabilistic uncertainty on  events $\varphi_1,\ldots,\varphi_k$;  the latter allows one to  combine the inner formulas $P(\varphi_i)$'s  by means of \luk\ connectives. By doing so, we are able to express properties of atomic probabilistic formulas. For instance the formula (P3) below expresses the finite additivity law. 
 In what follows we shall write $Var(\Phi_1,\ldots,\Phi_m)$ for the set of (inner) \luk\ variables of the event formulas occurring in the compound modal formulas $\Phi_i$'s.

Axioms and rules of FP$(\L,\L)$ are as follows:
\vspace{.2cm}

\noindent(E\L): all axioms and rules of \luk\ calculus for  event formulas;
\vspace{.2cm}

\noindent (M\L): all axioms and rules of \luk\ calculus for modal formulas;

\vspace{.2cm}

\noindent (P): the following axioms and rules specific for the modality $P$:

\begin{enumerate}
\item[(P1)] $\neg P(\varphi)\leftrightarrow P(\neg\varphi)$;
\item[(P2)] $P(\varphi\to\psi)\to(P(\varphi)\to P(\psi))$;
\item[(P3)] $P(\varphi\oplus\psi)\leftrightarrow[(P(\varphi)\to P(\varphi\odot\psi))\to P(\psi)]$;
\item[(N)] From $\varphi$ derive $P(\varphi)$ ({\em necessitation} rule). 
\end{enumerate}
The notion of {\em proof} is defined as usual and, for every modal formula $\Phi$, we will henceforth write $\vdash_{FP}\Phi$ to denote that $\Phi$ is a theorem of FP$(\L,\L)$. As in the \luk\ case, if $\Gamma$ is a countable (possibly infinite) set of modal formulas and $\Phi$ is a modal formula, we write $\Gamma\vdash_{FP}\Phi$ to denote that $\Phi$ is provable from $\Gamma$ in $\FPL$.

The next two propositions show that the logic $\FPL$ enjoys the {\em substitution of equivalents}  for both the inner and the outer layers.
\begin{proposition}\label{propSubstitution}
Let $\Phi=t[P(\varphi_1),\ldots,P(\varphi_k)]$ be a formula in $\pfm$ and let $\varphi_1',\ldots, \varphi_k'\in {\bf Fm}$ be 
such that, for all $i=1,\ldots, k$, $\vdash_{\lu}\varphi_i\leftrightarrow\varphi_i'$. If $\Phi'=t[P(\varphi_1'),\ldots,P(\varphi_k')]$, then $\vdash_{FP}\Phi\leftrightarrow\Phi'$. 
\end{proposition}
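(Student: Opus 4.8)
The plan is to reduce the statement to its \emph{atomic} instance and then propagate it through the term $t$ using the outer-layer substitution of equivalents. Concretely, I would first establish the following atomic claim: whenever $\vdash_{\lu}\varphi_i\leftrightarrow\varphi_i'$, one has $\vdash_{FP}P(\varphi_i)\leftrightarrow P(\varphi_i')$. Once this is in hand, the passage from the atomic formulas to the compound formula $\Phi=t[P(\varphi_1),\ldots,P(\varphi_k)]$ is purely a matter of the outer \luk\ calculus, which by (M\L) satisfies all the axioms and rules of \L, and in particular the substitution of equivalents (SE) of Proposition \ref{propBasicPropLuk}, now read with the atomic modal formulas $P(\varphi_1),\ldots,P(\varphi_k)$ playing the role of propositional variables and with $\vdash_{FP}$ in place of $\vdash_{\lu}$.

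For the atomic claim I would argue as follows. Since $\vdash_{\lu}\varphi_i\leftrightarrow\varphi_i'$ unfolds to the two theorems $\vdash_{\lu}\varphi_i\to\varphi_i'$ and $\vdash_{\lu}\varphi_i'\to\varphi_i$, each of these is in particular a theorem of $\FPL$ among the event formulas by (E\L). Applying the necessitation rule (N) to $\varphi_i\to\varphi_i'$ yields $\vdash_{FP}P(\varphi_i\to\varphi_i')$, while the modal axiom (P2) gives $\vdash_{FP}P(\varphi_i\to\varphi_i')\to(P(\varphi_i)\to P(\varphi_i'))$; one application of modus ponens, available in the outer logic, then produces $\vdash_{FP}P(\varphi_i)\to P(\varphi_i')$. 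The symmetric argument starting from $\varphi_i'\to\varphi_i$ gives $\vdash_{FP}P(\varphi_i')\to P(\varphi_i)$. Since the outer \luk\ logic proves the conjunction of any two of its theorems, combining the two implications yields $\vdash_{FP}P(\varphi_i)\leftrightarrow P(\varphi_i')$, as desired. Note that only (N), (P2) and the outer propositional machinery are needed here; (P1) and (P3) play no role.

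With the atomic equivalences secured for every $i=1,\ldots,k$, I would conclude by invoking the outer form of (SE): viewing $t$ as a \luk\ formula in the variables $x_1,\ldots,x_k$ and substituting the provably equivalent atomic modal formulas $P(\varphi_i)$ and $P(\varphi_i')$, the statement of Proposition \ref{propBasicPropLuk}(SE), transferred to the outer calculus, gives $\vdash_{FP}t[P(\varphi_1),\ldots,P(\varphi_k)]\leftrightarrow t[P(\varphi_1'),\ldots,P(\varphi_k')]$, that is $\vdash_{FP}\Phi\leftrightarrow\Phi'$. I expect the only delicate point to be the legitimacy of this transfer: one must check that the atomic modal formulas genuinely behave as the propositional variables of a \luk\ derivation, so that a purely propositional meta-theorem about \L\ applies verbatim with $\vdash_{FP}$ replacing $\vdash_{\lu}$. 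This is exactly what (M\L) licenses, since it stipulates that the outer logic is a faithful copy of \luk\ calculus over the modal formulas; all the genuinely modal content is confined to the atomic step above.
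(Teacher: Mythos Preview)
Your proposal is correct and follows essentially the same approach as the paper's own proof: reduce to the atomic case $\Phi=P(\varphi)$ via the outer-layer substitution of equivalents (licensed by (M\L) and Proposition~\ref{propBasicPropLuk}(SE)), and establish the atomic equivalence using (N), (P2), and modus ponens in both directions. If anything, you are slightly more explicit than the paper in justifying why (SE) transfers from $\vdash_{\lu}$ to $\vdash_{FP}$ on the outer layer.
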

\begin{proof}
By Proposition \ref{propBasicPropLuk}, \luk\ logic satisfies the substitution of equivalents. Then, it is enough to prove the claim for $\Phi=P(\varphi)$. Let hence $\varphi'$ be a \luk\ formula such that $\vdash_{\lu}\varphi\leftrightarrow\varphi'$. Then, in particular, $\vdash_{\lu}\varphi\to\varphi'$ and $\vdash_{\lu}\varphi'\to\varphi$. From the former, by a step of necessitation (N), we obtain that $\vdash_{FP}P(\varphi\to\varphi')$ and thus, by the axiom (P2), plus modus ponens, we get $\vdash_{FP}P(\varphi)\to P(\varphi')$. Similarly, from $\vdash_{\lu}\varphi'\to\varphi$, we obtain that $\vdash_{FP}P(\varphi')\to P(\varphi)$ and hence the claim is settled. 
\end{proof}

The next result immediately follows from the fact that \luk\ axioms and rules hold for modal formulas.
\begin{proposition}\label{propSubstitution2}
Let $\Phi=t[P(\varphi_1),\ldots,P(\varphi_k)]$ be a formula in $\pfm$ and let $t'$ be a \luk\ term in $k$ variables that is logically equivalent (in \luk\ logic) to $t$. If $\Phi'=t'[P(\varphi_1),\ldots,P(\varphi_k)]$, then $\vdash_{FP}\Phi\leftrightarrow\Phi'$. 
\end{proposition}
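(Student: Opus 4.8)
The plan is to reduce the claim to a purely propositional fact about \luk\ logic, exploiting the observation---already emphasized in the excerpt---that compound modal formulas are nothing but MV-terms whose ``variables'' are the atomic modal formulas $P(\varphi_i)$. The statement asserts that if $t$ and $t'$ are logically equivalent \luk\ terms in $k$ variables, then substituting the same atomic modal formulas $P(\varphi_1),\ldots,P(\varphi_k)$ into both yields $\FPL$-equivalent modal formulas. Since (M\L) stipulates that all axioms and rules of \luk\ calculus hold for modal formulas, the outer layer of $\FPL$ is literally \luk\ logic with the $P(\varphi_i)$ playing the role of propositional variables.

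First I would make precise what ``$t'$ is logically equivalent to $t$'' means at the propositional level: by definition it means $\vdash_{\lu} t[x_1,\ldots,x_k]\leftrightarrow t'[x_1,\ldots,x_k]$ as \luk\ formulas in the variables $x_1,\ldots,x_k$. The key step is then to transport this \luk\ theorem into $\FPL$ via the (M\L) axioms. Concretely, any \luk\ derivation of $t\leftrightarrow t'$ uses only the axioms (\L1)--(\L4) and the rule (MP); by (M\L) each such axiom instance and each (MP) step is equally available when the formulas involved are modal formulas. Hence the very same derivation, read with $x_i$ uniformly replaced by the atomic modal formula $P(\varphi_i)$, is a legitimate proof in $\FPL$. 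This yields $\vdash_{FP} t[P(\varphi_1),\ldots,P(\varphi_k)]\leftrightarrow t'[P(\varphi_1),\ldots,P(\varphi_k)]$, which is exactly $\vdash_{FP}\Phi\leftrightarrow\Phi'$.

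The one point deserving care---and the closest thing to an obstacle---is justifying that substitution of the $P(\varphi_i)$ for the variables $x_i$ preserves provability. In a Hilbert-style calculus this is the standard \emph{substitution invariance} of derivability: because the axiom schemata (\L1)--(\L4) are closed under arbitrary uniform substitution and (MP) commutes with substitution, applying a uniform substitution to every line of a proof again produces a proof. The only subtlety is that the substituted objects $P(\varphi_i)$ must themselves be well-formed modal formulas of $\pfm$, which they are by the definition of the class (MF). Since no genuine modal reasoning (no use of (P1)--(P3) or (N)) is required, I would simply invoke this substitution-invariance of the \luk\ fragment embedded in the outer layer, as the text itself anticipates by remarking that ``\luk\ axioms and rules hold for modal formulas.''

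I would therefore write the proof as a short argument: fix a \luk\ derivation witnessing $\vdash_{\lu} t\leftrightarrow t'$, observe by (M\L) that all its steps remain valid for modal formulas, apply the uniform substitution $x_i\mapsto P(\varphi_i)$ throughout, and conclude $\vdash_{FP}\Phi\leftrightarrow\Phi'$. No induction on the structure of $t$ is needed, in contrast with Proposition~\ref{propSubstitution} where the inner equivalences $\vdash_{\lu}\varphi_i\leftrightarrow\varphi_i'$ had to be pushed through the modality using (N) and (P2); here the equivalence already lives at the outer level, so the result is essentially immediate from (M\L).
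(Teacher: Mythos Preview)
Your proposal is correct and matches the paper's own approach exactly: the paper simply states that the result ``immediately follows from the fact that \luk\ axioms and rules hold for modal formulas,'' i.e., from (M\L). Your write-up spells out in more detail the substitution-invariance step that the paper leaves implicit, but the underlying argument is the same.
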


The most natural semantics for $\FPL$ is the one provided by {\em states of MV-algebras}. \begin{definition}[\cite{MuStates}]\label{defStates}
For every MV-algebra ${\bf A}$, a {\em state of ${\bf A}$} is a function $s:A\to[0,1]$  satisfying
\begin{itemize}
\item[(s1)] $s(\top)=1$ ({\em normalization}) and
\item[(s2)] $s(a\oplus b)=s(a)+s(b)$ for all $a, b\in A$ such that $a\odot b=\bot$ ({\em finite additivity}).
\end{itemize}
\end{definition}

States  are finitely additive probability functions if the MV-algebra ${\bf A}$ is in particular a Boolean algebra (i.e., it satisfies $x\vee\neg x=\top$). Furthermore, every homomorphism of an MV-algebra ${\bf A}$ to $[0,1]_{MV}$ is a state. More in general, every state of an MV-algebra ${\bf A}$ belongs to the topological closure, in the product space $[0,1]^A$, of the convex hull of homomorphisms of ${\bf A}$ to $[0,1]$, see \cite{MuStates,MuAdvanced,KF15} for further details.

Notice that the axioms and rules of $\FPL$ are enough to syntactically prove that the modality $P$ satisfies the basic properties of states. For instance, $\top\to P(\top)$ follows by the necessitation rule and $P(\top)\to \top$ also follows since $\varphi\to\top$ is a theorem of \luk\ logic. Therefore $P(\top)\leftrightarrow \top$, that corresponds to the above (s1), can be proved in $\FPL$. 
Instantiating  (P1) with $\varphi=\top$, one obtains $\neg P(\top)\leftrightarrow P(\neg\top)$. Since $P(\top)\leftrightarrow\top$ and $\neg\top\leftrightarrow\bot$, we get that $\bot\leftrightarrow P(\bot)$ which  reads ``the probability of a contradiction is zero''. Finally, the finite additivity (s2) of P is proved as follows: let $\varphi$ and $\psi$ be such that $\varphi\odot \psi\leftrightarrow\bot$ is a theorem. Then, by necessitation $P(\varphi\odot\psi)\leftrightarrow \bot$ is a theorem as well, and substituting $P(\varphi\odot\psi)$ by $\bot$ in (P3), one has $P(\varphi\oplus\psi)\leftrightarrow[(P(\varphi)\to \bot)\to P(\psi)]$. Now, $P(\varphi)\to \bot$ is equivalent, in \luk\ logic, to $\neg P(\varphi)$, thus $(P(\varphi)\to \bot)\to P(\psi)$ is  $\neg P(\varphi)\to P(\psi)$ that equals $P(\varphi)\oplus P(\psi)$. Hence, from $\vdash_{FP}\varphi\odot\psi\leftrightarrow\bot$, we  infer $\vdash_{FP}P(\varphi\oplus\psi)\leftrightarrow P(\varphi)\oplus P(\psi)$. 

Now, let $\Phi=t[P(\varphi_1),\ldots, P(\varphi_k)]$ be a modal formula of $\pfm$ and assume that $n=|Var(\Phi)|$. Then, if $s$ is a state of $\free(n)$, we can evaluate $\Phi$ in the standard MV-algebra $[0,1]_{MV}$ by $s$ in the following way: 
$$
t^{[0,1]_{MV}}[s(f_{\varphi_1}), \ldots, s(f_{\varphi_k})].
$$
\begin{definition}
	For every formula $\Phi$ in $n$ variables and for every state $s$ of $\free(n)$, we will write $s\models \Phi$ if $t^{[0,1]_{MV}}[s(f_{\varphi_1}), \ldots, s(f_{\varphi_k})]=1$. 
\end{definition}

In \cite[Theorem 4.2]{F21}, the logic $\FPL$ is shown to be complete with respect to states, that is to say, for every modal formula $\Phi$ with $n=|Var(\Phi)|$, if $\vdash_{FP}\Phi$ then for all states $s$ of $\free(n)$, $s\models \Phi$. 
Now we present a slight generalization of this standard completeness theorem, that will turn out to be useful in what follows. We will make use of the strong completeness result of $\FPL$ with respect to {\em hyperstates} shown in \cite{F08}.  For every MV-algebra $\alg A$, an hyperstate of $\alg A$ is a map $s^{*}: A \to[0,1]^{*}$, where $[0,1]^*$ is a nontrivial ultrapower of the real unit interval that satisfies (s1) and (s2) of the above Definition \ref{defStates}. Given a modal formula $\Phi=t[P(\varphi_1),\ldots, P(\varphi_k)]$ of  $\pfm$ with $n=|Var(\Phi)|$, if $s^*$ is a hyperstate of $\free(n)$, we write that $s^*\models \Phi$ if 
$
t^{[0,1]^*_{MV}}[s^*(f_{\varphi_1}), \ldots, s^*(f_{\varphi_k})]=1.
$
The following holds.
\begin{proposition}[{\cite[Theorem 4.8]{F08}}]\label{propHyper}
Let $\Phi,\Psi$ be formulas in $\pfm$ such that $n=|Var(\Phi,\Psi)|$. Then $\Phi\vdash_{FP}\Psi$ iff for every hyperstate $s^*$ of $\free(n)$, $s^*\models \Phi$ implies $s^*\models \Psi$.
\end{proposition}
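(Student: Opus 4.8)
The plan is to prove the two directions separately, treating soundness as routine and concentrating on completeness, where the real content lies. For soundness, namely that $\Phi\vdash_{FP}\Psi$ forces $s^*\models\Phi\Rightarrow s^*\models\Psi$ for every hyperstate $s^*$ of $\free(n)$, I would induct on the length of an $\FPL$-derivation. Since $[0,1]^*_{MV}$ is itself an MV-algebra, every hyperstate validates the inner and outer \luk\ axioms and is closed under their rules, because the outer interpretation $t\mapsto t^{[0,1]^*_{MV}}[s^*(f_{\varphi_1}),\ldots,s^*(f_{\varphi_k})]$ is simply an MV-term evaluation. The probability axioms (P1)--(P3) and the rule (N) are then validated directly from (s1) and (s2), exactly as the excerpt derives the state properties $P(\top)\leftrightarrow\top$, $P(\bot)\leftrightarrow\bot$ and finite additivity; for instance (P1) holds because $s^*(f_\varphi)+s^*(\neg f_\varphi)=s^*(\top)=1$.

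For completeness I would argue contrapositively: assuming $\Phi\not\vdash_{FP}\Psi$, I will construct a separating hyperstate. First I would algebraize the outer layer by forming the Lindenbaum--Tarski MV-algebra $\alg{M}$ of modal formulas whose event variables lie in $Var(\Phi,\Psi)$, modulo provable equivalence in $\FPL$; by Propositions \ref{propSubstitution} and \ref{propSubstitution2} this is the free MV-algebra on the atomic modal formulas $\{P(\varphi):[\varphi]\in\free(n)\}$ quotiented by the congruence generated by (P1)--(P3) and (N). Because the outer layer satisfies the local deduction theorem (LDT), $\Phi\not\vdash_{FP}\Psi$ means exactly that $[\Psi]$ lies outside the filter $\langle[\Phi]\rangle$ generated by $[\Phi]$ in $\alg{M}$. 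The prime filter theorem then yields a prime filter $F\supseteq\langle[\Phi]\rangle$ with $[\Psi]\notin F$, so the quotient $\alg{M}/F$ is an MV-chain in which $\Phi$ maps to $\top$ while $\Psi$ does not.

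The final step passes from this chain to hyperstate values. Since $\alg{M}/F$ is totally ordered, Di Nola's representation theorem embeds it into an ultrapower $[0,1]^*=[0,1]^I/U$ of the standard MV-algebra; composing with the quotient map gives an MV-homomorphism $h:\alg{M}\to[0,1]^*$ with $h(\Phi)=1$ and $h(\Psi)\neq 1$. I would then set $s^*(f_\varphi)=h([P(\varphi)])$: this is well-defined by Proposition \ref{propSubstitution}, it satisfies the normalization (s1) because $P(\top)\leftrightarrow\top$, and it satisfies the additivity (s2) because axiom (P3) forces $s^*(f_\varphi\oplus f_\psi)=s^*(f_\varphi)+s^*(f_\psi)$ whenever $f_\varphi\odot f_\psi=\bot$, exactly as in the excerpt's syntactic derivation. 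Hence $s^*$ is a hyperstate, and by construction $s^*\models\Phi$ but $s^*\not\models\Psi$, completing the contrapositive.

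The hard part is precisely the transition to ultrapower values. Ordinary states take values in $[0,1]$, but the chain $\alg{M}/F$ need not embed into $[0,1]$---this is the non-local-finiteness of \luk\ flagged in the introduction, which is exactly why hyperstates are required rather than states---so one is forced to pass to $[0,1]^*$. The crux is therefore to invoke Di Nola's theorem to realize the chain inside a nontrivial ultrapower while simultaneously verifying that the induced map $\varphi\mapsto h([P(\varphi)])$ genuinely satisfies (s2) for \emph{every} disjoint pair of events, a check that rests entirely on axiom (P3).
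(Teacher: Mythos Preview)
The paper does not give its own proof of this proposition: it is stated with a bare citation to \cite[Theorem 4.8]{F08} and used as a black box. So there is no in-paper argument to compare against. That said, your outline is the standard route to strong nonstandard completeness for \luk-based modal logics and is essentially what the cited reference does: build the modal Lindenbaum MV-algebra, use the local deduction theorem to separate $[\Psi]$ from the filter generated by $[\Phi]$, pass to a prime quotient (an MV-chain), and then invoke Di Nola's representation to embed that chain into an ultrapower $[0,1]^*$, reading off $s^*(f_\varphi)=h([P(\varphi)])$.

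One point deserves a line of extra care. From (P3) and the syntactic derivation in the excerpt you only get $P(\varphi\oplus\psi)\leftrightarrow P(\varphi)\oplus P(\psi)$ when $\varphi\odot\psi$ is refutable, hence $s^*(f_{\varphi\oplus\psi})=s^*(f_\varphi)\oplus s^*(f_\psi)$ in $[0,1]^*_{MV}$, not yet the genuine sum required by (s2). To finish, note that $\varphi\odot\psi\leftrightarrow\bot$ yields $\vdash_{\lu}\psi\to\neg\varphi$, whence by (N), (P2) and (P1) one has $\vdash_{FP}P(\psi)\to\neg P(\varphi)$, so $s^*(f_\psi)\leq 1-s^*(f_\varphi)$ and the truncated sum coincides with the real sum. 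With that small addition your argument is complete.
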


Also, recall from \cite{CDM} that every MV-chain, i.e., every totally ordered MV-algebra, partially embeds into the standard MV-algebra $[0,1]_{MV}$. This means that for every MV-chain ${\bf A}$ and for every finite subset $X$ of $A$, there exists an injective map $\iota: X\to [0,1]_{MV}$ that preserves all the operations appearing in $X$. That is, for instance, if $x, y, x\oplus y\in X$, then $\iota(x\oplus y)=\iota(x)\oplus\iota(y)$.

The next technical lemma is extracted from the proof of \cite[Theorem 4.2]{F21}.
\begin{lemma}\label{LemmaKey1}
For every finite collection of McNaughton functions $f_1,\ldots, f_t$ of $\free(n)$ and for every hyperstate $s^*: \free(n)\to[0,1]^*$, there exist a finite subset $X$ of $[0,1]^*$, a partial embedding $\lambda$ of $X$ to $[0,1]_{MV}$ and a state $s:\free(n)\to[0,1]$ such that for all $j=1,\ldots, t$, 
$$
s(f_j)=\lambda(s^*(f_j)).
$$
\end{lemma}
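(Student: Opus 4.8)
The plan is to prove Lemma~\ref{LemmaKey1} by exploiting the partial-embeddability of MV-chains into $[0,1]_{MV}$, together with the fact that a hyperstate takes values in an ultrapower $[0,1]^*$ which, being an ultrapower of a chain, is itself a (dense) totally ordered MV-algebra. The strategy is to isolate a finite fragment of $[0,1]^*$ that records all the arithmetic relations we need to reproduce, partially embed it into $[0,1]_{MV}$, and then check that the composite is again a state.

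First I would collect the relevant finite data. Given the finitely many McNaughton functions $f_1,\ldots,f_t$, consider their values $s^*(f_1),\ldots,s^*(f_t)$ under the hyperstate together with $\top$ and $\bot$. However, to guarantee that the composite map is a \emph{state} one must also capture the additivity relations: I would therefore take a finite refinement of the $f_j$'s, for instance a common set of ``components'' or a partition of unity witnessing how each $f_j$ decomposes, so that all the pairs $(a,b)$ with $a\odot b=\bot$ that are needed to verify (s2) for the chosen functions appear among finitely many elements. Let $X_0\subseteq A=\free(n)$ be this finite set (closed under the finitely many operations $\oplus,\neg,\bot,\top$ occurring among these elements), and let $X=s^*[X_0]\subseteq[0,1]^*$ be its image. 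By the partial-embeddability of MV-chains into $[0,1]_{MV}$ (quoted just before the lemma), there is an injective partial embedding $\lambda:X\to[0,1]_{MV}$ preserving all operations that are realized inside $X$; in particular $\lambda(s^*(\top))=1$ and $\lambda$ preserves the finitely many $\oplus$-relations recorded in $X$.

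Next I would define $s:=\lambda\circ s^*$ on the finite fragment and verify it is (the restriction of) a state, then extend. Normalization (s1) is immediate since $\lambda(s^*(\top))=\lambda(1)=1$. For finite additivity (s2), whenever $a,b$ are among the chosen elements with $a\odot b=\bot$, we have $s^*(a\oplus b)=s^*(a)+s^*(b)$ in $[0,1]^*$ because $s^*$ is a hyperstate; since $\lambda$ is a partial embedding preserving the $\oplus$ (equivalently the truncated-sum) relation witnessed in $X$, and since the relevant sums do not truncate (the values lie in $[0,1]$ and add to at most $1$), applying $\lambda$ yields $s(a\oplus b)=s(a)+s(b)$. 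The equality $s(f_j)=\lambda(s^*(f_j))$ for $j=1,\ldots,t$ then holds by construction. To obtain a genuine state defined on all of $\free(n)$ (and not merely on a finite fragment), I would invoke the standard fact that the finitely additive, normalized functional determined on this finite subalgebra extends to a state of $\free(n)$ — either by appealing to the convex-geometric description of the state space of $\free(n)$ (states form a closed convex set whose extreme points are the homomorphisms to $[0,1]$, so a partial assignment consistent with these constraints extends) or, more directly, by a Hahn--Banach / compactness argument on the state space.

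The main obstacle, and the step deserving the most care, is the \emph{choice of the finite set $X_0$}: it is not enough to take the raw values $s^*(f_j)$, because partial-embeddability only preserves operations that actually occur within the chosen finite set, whereas the state axiom (s2) is a constraint about \emph{all} additive decompositions. One must therefore enlarge the $f_j$'s to a finite, operation-closed family rich enough that every additivity relation needed to recognize $s$ as a state is internal to $X$, while keeping it finite so that partial-embeddability applies. A clean way to do this is to pass to a finite partition of the domain $[0,1]^n$ induced by the linear pieces of $f_1,\ldots,f_t$ and to work with the corresponding finitely generated subalgebra; this is precisely the kind of finitary reduction used in the proof of \cite[Theorem~4.2]{F21} from which the lemma is extracted. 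The second delicate point is ensuring the relevant truncated sums do not saturate at $1$, which follows from the constraint that the $s^*$-values of disjoint events sum to at most the value of their join; this guarantees $\lambda$ genuinely transports the numerical additivity rather than its truncated surrogate.
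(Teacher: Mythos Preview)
The paper does not actually supply a proof of this lemma: it is introduced with the sentence ``The next technical lemma is extracted from the proof of \cite[Theorem 4.2]{F21}'' and no argument is reproduced. So there is no in-paper proof to compare your attempt against; the paper simply defers to the external reference, which you yourself cite at the crucial point.

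Your outline is the correct one and matches the strategy of that reference: isolate a finite fragment of $[0,1]^*$ containing the values $s^*(f_j)$ together with enough auxiliary elements, apply partial embeddability of MV-chains into $[0,1]_{MV}$, and argue that the transported values come from an actual state of $\free(n)$. You also correctly diagnose the two delicate points (choosing the finite set $X_0$ large enough, and the non-truncation of the relevant sums).

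The one place where your sketch is still genuinely incomplete is the extension from the finite partial assignment to a global state on $\free(n)$. Checking (s1) and (s2) on a finite operation-closed fragment does \emph{not} by itself guarantee extendibility, and ``Hahn--Banach/compactness'' needs an input to act on. The concrete mechanism---which you gesture at but do not carry out---is to refine to a unimodular triangulation of $[0,1]^n$ linearizing all the $f_j$'s and pass to the associated Schauder hats $h_1,\ldots,h_m$. These are pairwise $\odot$-disjoint with $h_1\oplus\cdots\oplus h_m=\top$, so finite additivity of $s^*$ forces $\sum_i s^*(h_i)=1$ in $[0,1]^*$; including these elements and their partial sums in $X$, the partial embedding $\lambda$ yields nonnegative reals $\lambda(s^*(h_i))$ summing to $1$, and the convex combination $s=\sum_i \lambda(s^*(h_i))\cdot \mathrm{ev}_{v_i}$ of the vertex-evaluations is then a genuine state of $\free(n)$ with the required values on the $f_j$'s. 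Equivalently, one shows that the transported tuple lies in the coherent set $\mathscr{C}_{\{f_1,\ldots,f_t\}}$ and invokes Theorem~\ref{thm:Mund1}. Either way, this step must be made explicit for the argument to close.
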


We can hence now prove the claimed improvement of the standard completeness of $\FPL$. 
\begin{theorem}\label{thm:compl2}
Let $\Phi,\Psi$ be formulas in $\pfm$ such that $n=|Var(\Phi,\Psi)|$. Then $\Phi\vdash_{FP}\Psi$ iff for every state $s$ of $\free(n)$, $s\models \Phi$ implies $s\models \Psi$. 
\end{theorem}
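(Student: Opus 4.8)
The plan is to derive this state-completeness from the hyperstate-completeness already recorded in Proposition \ref{propHyper}, using Lemma \ref{LemmaKey1} as the bridge between hyperstates and genuine states. The first, easy, observation is that every state $s\colon\free(n)\to[0,1]$ is in particular a hyperstate: composing $s$ with the diagonal MV-embedding $[0,1]_{MV}\hookrightarrow[0,1]^{*}_{MV}$ yields a map still satisfying (s1) and (s2) but now with values in $[0,1]^{*}$, and since this embedding preserves every MV-term and the top element, $s\models\Phi$ computed in $[0,1]_{MV}$ holds if and only if it holds computed in $[0,1]^{*}_{MV}$. Consequently the left-to-right (soundness) direction is immediate: if $\Phi\vdash_{FP}\Psi$, then by Proposition \ref{propHyper} the implication $s^{*}\models\Phi\Rightarrow s^{*}\models\Psi$ holds for all hyperstates $s^{*}$, hence a fortiori for all states.

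For the converse I would argue again through Proposition \ref{propHyper}: assuming $s\models\Phi\Rightarrow s\models\Psi$ for all states $s$, it suffices to establish the same implication for an arbitrary hyperstate $s^{*}$ with $s^{*}\models\Phi$. Write $\Phi=t[P(\varphi_1),\ldots,P(\varphi_k)]$ and $\Psi=u[P(\psi_1),\ldots,P(\psi_l)]$, and let $f_{\varphi_1},\ldots,f_{\varphi_k},f_{\psi_1},\ldots,f_{\psi_l}$ be the associated McNaughton functions in $\free(n)$. Applying Lemma \ref{LemmaKey1} to this finite collection and to $s^{*}$ produces a finite $X\subseteq[0,1]^{*}$, a partial embedding $\lambda\colon X\to[0,1]_{MV}$, and a state $s\colon\free(n)\to[0,1]$ with $s(f_{\varphi_i})=\lambda(s^{*}(f_{\varphi_i}))$ and $s(f_{\psi_j})=\lambda(s^{*}(f_{\psi_j}))$. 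The heart of the matter is then to show that $\lambda$ intertwines the two evaluations, namely
\[
t^{[0,1]_{MV}}[s(f_{\varphi_1}),\ldots,s(f_{\varphi_k})]=\lambda\bigl(t^{[0,1]^{*}_{MV}}[s^{*}(f_{\varphi_1}),\ldots,s^{*}(f_{\varphi_k})]\bigr),
\]
and similarly for $u$. Granting this, the argument closes in one line: from $s^{*}\models\Phi$ we get $t^{[0,1]^{*}_{MV}}[\ldots]=1$, so the left-hand side equals $\lambda(1)=1$ and $s\models\Phi$; the hypothesis yields $s\models\Psi$, i.e.\ $u^{[0,1]_{MV}}[s(f_{\psi_j})]=1$; and since this equals $\lambda\bigl(u^{[0,1]^{*}_{MV}}[s^{*}(f_{\psi_j})]\bigr)$ while $\lambda$ is injective with $\lambda(1)=1$, we recover $u^{[0,1]^{*}_{MV}}[s^{*}(f_{\psi_j})]=1$, that is $s^{*}\models\Psi$.

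The main obstacle is precisely the intertwining identity above. A partial embedding preserves only those operations whose operands \emph{and} results already lie in $X$, whereas the intermediate values produced while evaluating the outer terms $t$ and $u$ on the state-values $s^{*}(f_{\varphi_i})$ are genuine elements of $[0,1]^{*}$ that need not be states of any McNaughton function; indeed, since states are not homomorphisms, in general $s^{*}(f)\oplus s^{*}(g)\neq s^{*}(f\oplus g)$, so these values cannot be forced into $X$ merely by enlarging the collection of McNaughton functions and must instead be placed in $X$ directly. I would therefore arrange, using the partial-embedding property of the MV-chain $[0,1]^{*}$ recalled before Lemma \ref{LemmaKey1}, for $\lambda$ to be defined on a finite operation-closed set containing, besides the $s^{*}(f_{\varphi_i})$ and $s^{*}(f_{\psi_j})$, all the finitely many intermediate values occurring in the evaluations of $t$ and $u$, together with $0$ and $1$. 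On such a set a routine induction on the structure of $t$ (respectively $u$) yields the intertwining identity, each inductive step being exactly the preservation of $\oplus$ and $\neg$ guaranteed by $\lambda$ being a partial embedding.
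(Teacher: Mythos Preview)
Your approach is essentially the same as the paper's: both directions go through Proposition~\ref{propHyper}, and the nontrivial direction is handled via Lemma~\ref{LemmaKey1} to pass from a hyperstate to a genuine state. The paper argues by contraposition (from $\Phi\not\vdash_{FP}\Psi$ obtain a hyperstate countermodel, then a state countermodel), while you argue directly, but this is only a cosmetic difference.

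You are in fact more careful than the paper on the one delicate point. The paper simply asserts that from $s(f_\gamma)=\lambda(s^{*}(f_\gamma))$ one gets $t^{[0,1]_{MV}}[s(f_{\varphi_i})]=1$ and $r^{[0,1]_{MV}}[s(f_{\psi_j})]<1$, without discussing whether the finite set $X$ on which $\lambda$ is a partial embedding actually contains the intermediate values produced when evaluating the outer terms $t$ and $r$. Your observation that these intermediate values are not of the form $s^{*}(f)$ for any McNaughton $f$ (states not being homomorphisms) is exactly right, and your proposed remedy---enlarging $X$ to a finite set closed under the finitely many subterm evaluations of $t$ and $r$ before invoking the partial-embedding property of the MV-chain $[0,1]^{*}$---is the standard way to make this step rigorous. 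This is implicitly what happens in the proof of \cite[Theorem~4.2]{F21} from which Lemma~\ref{LemmaKey1} is extracted; the lemma as stated here abstracts away that bookkeeping, and the paper's proof relies on the reader supplying it. Your induction on the structure of $t$ (resp.\ $u$) then gives the intertwining identity, and the argument closes as you indicate.
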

\begin{proof}
 By Proposition \ref{propHyper}, $\Phi\vdash_{FP}\Psi$ iff there exists a hyperstate $s^*$ of $\free(n)$ such that,  $s^*\models \Phi$ implies $s^*\models \Psi$.  
By Lemma \ref{LemmaKey1} there exist a finite subset $X$ of $[0,1]^*$, a partial embedding $\lambda$ of $X$ to $[0,1]_{MV}$ and a state $s:\free(n)\to[0,1]$ such that for all $\gamma\in \{\varphi_1,\ldots, \varphi_k, \psi_1,\ldots, \psi_l\}$, $s(f_\gamma)=\lambda(s^*(f_\gamma))$. 
Thus,
$$
t^{[0,1]_{MV}}[s(f_{\varphi_1}),\ldots, s(f_{\varphi_k})]=1\mbox{ and } r^{[0,1]_{MV}}[s(f_{\psi_1}),\ldots, s(f_{\psi_l})]<1.
$$
\end{proof}

In light of the above result, the requirements that we made in Proposition \ref{propSubstitution} and Proposition \ref{propSubstitution2} on the fact that the terms $t$ and $t'$ are written in the same number of variables can be shown to be not necessary. Indeed, every formula $\Phi=t[P(\varphi_1),\ldots, P(\varphi_k)]$ can be equivalently rewritten by allowing more atomic modal formulas. More precisely, let $\Phi$ be as above and let $P(\psi_1),\ldots, P(\psi_m)$ be atomic modal formulas not occurring in $\Phi$, and consider the formula 
$$
\Phi'=\Phi\wedge\left(\bigwedge_{j=1}^m [P(\psi_j)\to P(\psi_j)]\right).
$$
Then, one can easily prove that $\vdash_{FP}\Phi\leftrightarrow\Phi'$. Indeed, assuming w.l.o.g. that $n$ is the number of propositional variables occurring in $\Phi'$, by Theorem \ref{thm:compl2}, $\vdash_{FP}\Phi\leftrightarrow\Phi'$ iff, for all state $s$ of $\free(n)$, $s\models \Phi\leftrightarrow\Phi'$, that is to say, 
$$
t^{[0,1]_{MV}}[s(f_{\varphi_1}),\ldots, s(f_{\varphi_k})]=\min\left\{t^{[0,1]_{MV}}[s(f_{\varphi_1}),\ldots, s(f_{\varphi_k})], \bigwedge_{j=1}^m(s(f_{\psi_j})\to s(f_{\psi_j}))\right\}. 
$$
The latter equality is trivially true because, for all $j=1,\ldots, m$ and for all state $s$, $s(f_{\psi_j})\to s(f_{\psi_j})=1$.

Thus, in particular, we immediately get the following proposition that will be helpful in  the next results. 

\begin{proposition}\label{propNumVariables}
Let $\Phi=t[(P(\varphi_1),\ldots,P(\varphi_k)]$ and $\Psi=u[P(\psi_1),\ldots, P(\psi_m)]$ be formulas from $\pfm$. Then there exist $\Phi', \Psi'\in \pfm$ that contain all the atomic modal formulas $P(\varphi_1),\ldots,P(\varphi_k)$, $P(\psi_1),\ldots, P(\psi_m)$ as subformulas, such that $\vdash_{FP}\Phi\leftrightarrow\Phi'$ and $\vdash_{FP}\Psi\leftrightarrow\Psi'$.
\end{proposition}

\section{Coherence, coherent sets and their geometry}\label{sec:cohe}
States of MV-algebras capture the uncertainty quantification of events that are described within the language of \luk\ logic, as probability measures do in the realm of classical logic. Moreover, in analogy with the foundational aspects of classical probability theory, states are the  functions that characterize the natural generalization of de Finetti's no-Dutch-Book criterion \cite{deF1,deF2} to the MV-algebraic realm. 

To see this, recall that de Finetti's foundation of {\em subjective} probability theory is grounded on a betting game between two players (commonly called the {\em bookmaker} and the {\em gambler}) that wage money on the occurrence of some events whose occurrence is unknown. Providing a full detailed presentation of de Finetti's game and its generalizations is out of the scope of the present paper and we urge the interested reader to consult the rich literature on this subject (see e.g., \cite{AGM, deF1, deF2,Mu06,MuAPAL,Paris,Paris1,Weat} and references therein). However, what is important to recall is that, whenever the bookmaker fixes a set of events $\varphi_1,\ldots,\varphi_k$ and {\em selling prices} $\beta_1,\ldots, \beta_k\in [0,1]$ (that is to say, a {\em book} $\beta:\varphi_i\mapsto \beta_i$), those latter are {\em coherent} (and the book $\beta$ is {\em coherent}) if they bar any possible malicious gambler from elaborating a strategy of bets that would let the bookmaker to incur in a {\em sure-loss}. 

What is of key importance for what follows is that, according to de Finetti's Theorem \cite{deF2}, bookmaker's selling prices $\beta_1,\ldots, \beta_k$ are coherent iff they are {\em consistent} with Kolmogorov's axioms of finitely additive probabilities.

\begin{theorem}[de Finetti]
Let $\mathcal{E}=\{\varphi_1,\ldots, \varphi_k\}$ be any finite set of classical events and let $\beta:\varphi_i\mapsto\beta_i$ be a book on them. Then $\beta$ is coherent iff there exists a probability $\mu$ on the Boolean algebra  generated by the $\varphi_i$'s that extends $\beta$. That is to say, $\mu(\varphi_i)=\beta(\varphi_i)$ for all $i=1,\ldots, k$. 
\end{theorem}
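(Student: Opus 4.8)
The plan is to translate both sides of the biconditional into a single statement about convex geometry in $\mathbb{R}^k$ and then apply a separating hyperplane argument. First I would note that, since $\mathcal{E}$ is finite, the Boolean algebra $\mathbf{B}$ generated by $\varphi_1,\ldots,\varphi_k$ is finite, and its homomorphisms to the two-element Boolean algebra $\{0,1\}$ form a finite set of \emph{possible worlds} $v_1,\ldots,v_m$. Each world determines a point $P_j=(v_j(\varphi_1),\ldots,v_j(\varphi_k))\in\{0,1\}^k\subseteq\mathbb{R}^k$, and writing $\beta=(\beta_1,\ldots,\beta_k)$ I would record two dictionary entries. On the one hand, every finitely additive probability on the finite algebra $\mathbf{B}$ is a convex combination $\sum_{j}c_j\,v_j$ of its $0$--$1$ valued homomorphisms; hence a probability $\mu$ extending $\beta$ exists if and only if $\beta$ is a convex combination of $P_1,\ldots,P_m$, i.e. $\beta\in\mathrm{conv}\{P_1,\ldots,P_m\}$. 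On the other hand, fixing stakes $\lambda=(\lambda_1,\ldots,\lambda_k)\in\mathbb{R}^k$, the gambler's balance in world $v_j$ is $\sum_i\lambda_i(v_j(\varphi_i)-\beta_i)=\langle\lambda,P_j-\beta\rangle$, so by definition $\beta$ is coherent precisely when there is \emph{no} $\lambda$ with $\langle\lambda,P_j-\beta\rangle>0$ for every $j$.

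For the easy implication (existence of $\mu$ implies coherence), suppose $\beta=\sum_j c_j P_j$ with $c_j\ge 0$ and $\sum_j c_j=1$. Then for any stake vector $\lambda$ one computes $\sum_j c_j\langle\lambda,P_j-\beta\rangle=\langle\lambda,\sum_j c_jP_j-\beta\rangle=0$. A convex combination of the numbers $\langle\lambda,P_j-\beta\rangle$ therefore vanishes, so they cannot all be strictly positive; no Dutch book exists and $\beta$ is coherent.

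For the converse I would argue by contraposition using separation. If no extending probability exists then $\beta\notin\mathrm{conv}\{P_1,\ldots,P_m\}$, and since it is the convex hull of finitely many points it is a compact convex polytope. The finite-dimensional separating hyperplane theorem then yields a vector $\lambda\in\mathbb{R}^k$ and a scalar $\alpha$ with $\langle\lambda,\beta\rangle<\alpha\le\langle\lambda,P_j\rangle$ for all $j$. Consequently $\langle\lambda,P_j-\beta\rangle\ge\alpha-\langle\lambda,\beta\rangle>0$ for every world $v_j$, i.e. these stakes guarantee the gambler a strictly positive balance in all worlds---a Dutch book---so $\beta$ is incoherent.

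The genuinely substantive step is the converse, and its engine is strict separation of a point from a compact convex polytope; the finiteness of $\mathcal{E}$ (hence of $\mathbf{B}$ and of the world set) is exactly what makes $\mathrm{conv}\{P_1,\ldots,P_m\}$ a bounded polytope, so that the elementary finite-dimensional separation theorem applies without any closure or compactness hypotheses having to be imposed by hand. The remaining care is bookkeeping: fixing the sign convention for stakes so that ``sure loss for the bookmaker'' corresponds to ``$\langle\lambda,P_j-\beta\rangle>0$ in every world,'' and recalling that on a finite Boolean algebra the extreme probabilities are exactly the $0$--$1$ homomorphisms, which is what licenses the identification of ``$\mu$ extends $\beta$'' with membership of $\beta$ in the convex hull.
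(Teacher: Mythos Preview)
Your argument is correct and is essentially the standard proof of de Finetti's coherence theorem via a separating hyperplane. However, you should be aware that the paper does \emph{not} prove this statement: it is quoted as a classical result, attributed to de Finetti with a reference to \cite{deF2}, and serves only as background motivation for the MV-algebraic generalization (Theorem~\ref{thm:Mund1}) that the paper actually uses. So there is no ``paper's own proof'' to compare against.

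That said, your write-up is a faithful reconstruction of the classical argument. The two dictionary entries you set up---probabilities on a finite Boolean algebra as convex combinations of $0$--$1$ homomorphisms, and coherence as nonexistence of a strictly separating linear functional---are exactly what is needed, and the appeal to strict separation of a point from a compact polytope is the standard engine. One minor remark: you may wish to note explicitly that the atoms of the finite Boolean algebra $\mathbf{B}$ are in bijection with the homomorphisms $v_j$, so that the representation $\mu=\sum_j c_j v_j$ with $c_j=\mu(\text{atom}_j)$ is immediate; this is implicit in your phrase ``extreme probabilities are exactly the $0$--$1$ homomorphisms,'' but making it explicit removes any doubt about the direction ``$\mu$ exists $\Rightarrow$ $\beta\in\mathrm{conv}\{P_j\}$.''
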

De Finetti's coherence criterion is sufficiently robust to extend to the \luk\ realm with essentially no modification and states of MV-algebras characterize this extended notion of coherence \cite{Mu06}.  
\begin{theorem}[Mundici]\label{thm:Mund1}
Let $\mathcal{E}=\{\varphi_1,\ldots,\varphi_k\}$ be a finite set of \luk\ events on $n$ propositional variables. A book $\beta:\varphi_i\mapsto\beta_i$ is coherent iff there exists a state $s$ of $\free(n)$ that extends it.
\end{theorem}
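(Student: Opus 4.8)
The plan is to reformulate de Finetti's coherence as a geometric convexity condition and then close the loop with a separating-hyperplane argument in one direction and the barycentric description of states in the other. First I would spell out the betting game precisely: a gambler challenges the book by fixing stakes $\sigma_1,\ldots,\sigma_k\in\mathbb{R}$, and once a valuation $x\in[0,1]^n$ is revealed (equivalently, a homomorphism $\free(n)\to[0,1]_{MV}$, which by Theorem~\ref{Them:ISO1} is just evaluation $f\mapsto f(x)$ at a point of the cube), the bookmaker's balance is $\sum_{i=1}^k\sigma_i(\beta_i-f_{\varphi_i}(x))$. The book $\beta$ is coherent exactly when no choice of stakes forces this balance to be strictly negative for every $x$; since each $f_{\varphi_i}$ is continuous and $[0,1]^n$ is compact, coherence is equivalent to the condition that for every $\sigma\in\mathbb{R}^k$ there is some $x\in[0,1]^n$ with $\sum_i\sigma_i(\beta_i-f_{\varphi_i}(x))\geq 0$.

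For the implication from coherence to the existence of an extending state, I would introduce the continuous map $F\colon[0,1]^n\to[0,1]^k$, $F(x)=(f_{\varphi_1}(x),\ldots,f_{\varphi_k}(x))$, and consider $C=\mathrm{conv}(F([0,1]^n))$, a compact convex subset of $[0,1]^k$ (a convex hull of a compact set is compact in finite dimension). The heart of the argument is the equivalence $\beta\in C$ iff $\beta$ is coherent: rewriting the coherence condition via $\max_x$ turns it into $\sigma\cdot\beta\geq\min_{p\in C}\sigma\cdot p$ for all $\sigma$, and by the strict separating hyperplane theorem a point satisfies this for a compact convex $C$ precisely when it belongs to $C$. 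Assuming $\beta\in C$, Carathéodory's theorem expresses $\beta=\sum_j\lambda_j F(x_j)$ as a finite convex combination of points of $F([0,1]^n)$; then $s(f)=\sum_j\lambda_j f(x_j)$ is a convex combination of the homomorphisms $f\mapsto f(x_j)$, each of which is a state, hence $s$ is itself a state of $\free(n)$, and $s(f_{\varphi_i})=\beta_i$ by construction.

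For the converse I would use the fact recalled in Subsection~\ref{sec23} that every state lies in the topological closure of the convex hull of the homomorphisms $\free(n)\to[0,1]_{MV}$ (equivalently, $s$ is the barycenter of a regular Borel probability measure $\mu$ on $[0,1]^n$, so that $s(f)=\int_{[0,1]^n}f\,d\mu$). Given a state $s$ with $s(f_{\varphi_i})=\beta_i$, fix stakes $\sigma$ and set $g(x)=\sum_i\sigma_i(\beta_i-f_{\varphi_i}(x))$. Integrating, $\int g\,d\mu=\sum_i\sigma_i(\beta_i-s(f_{\varphi_i}))=0$; since $g$ is continuous on the compact cube it cannot be strictly negative everywhere (otherwise its maximum, and hence its integral, would be strictly negative), so some $x$ yields $g(x)\geq 0$. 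Thus no Dutch book exists and $\beta$ is coherent. If one prefers to avoid measures, the same conclusion follows by approximating $s$ by finite convex combinations of evaluations and noting that a uniform strict bound $g\leq-\varepsilon$ would survive in the limit.

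I expect the main obstacle to be the careful bookkeeping in the coherence-to-membership equivalence: keeping the signs consistent when passing from ``no uniformly negative balance'' to the support-function inequality $\sigma\cdot\beta\geq\min_{p\in C}\sigma\cdot p$, and invoking the separating hyperplane theorem in its correct strict form on the compact convex set $C$. The remaining ingredients --- compactness of $C$, Carathéodory's theorem, convexity of the set of states, and the barycentric representation --- are standard once this equivalence is established.
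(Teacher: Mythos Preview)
The paper does not supply its own proof of this statement: Theorem~\ref{thm:Mund1} is quoted from Mundici's paper \cite{Mu06} and used as a black box. So there is no in-paper argument to compare against; the only question is whether your proposal stands on its own.

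Your outline is correct and is essentially the standard route to this de Finetti--Mundici theorem. The key steps --- reducing coherence to the inequality $\sigma\cdot\beta\geq\min_{p\in C}\sigma\cdot p$ for all $\sigma$, identifying this with membership in the compact convex set $C=\mathrm{conv}(F([0,1]^n))$ via strict separation, and then extracting a state by Carath\'eodory in one direction and an averaging argument in the other --- are exactly the ingredients Mundici uses in \cite{Mu06} (and that the paper later repackages as Theorem~\ref{thmCoheGeo}). One small remark: for the converse you invoke the barycentric/integral representation of states, which is a nontrivial result (the Kroupa--Panti theorem); the paper alludes to it in Subsection~\ref{sec23} but does not prove it. Your measure-free alternative, approximating $s$ by convex combinations of evaluations and passing a uniform strict bound to the limit, is cleaner here and keeps the argument self-contained relative to what the paper actually states. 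The sign bookkeeping you flag as a concern is fine as written.
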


For every set $\mathcal{E}=\{\varphi_1,\ldots, \varphi_k\}$ of events (in $n$ variables), the set of all  coherent books $\beta: \mathcal{E}\to[0,1]$ has a clear geometric representation. Indeed,
 consider the McNaughton functions $f_{\varphi_1},\ldots,f_{\varphi_k}:[0,1]^n\to[0,1]$, the set $\{\langle f_1(x),\ldots, f_k(x)\rangle\mid x \in [0,1]^n\}$ and its convex hull
\begin{equation}\label{eqCoherentSet}
\mathscr{C}_\mathcal{E}=\overline{\rm co}\{\langle f_1(x),\ldots, f_k(x)\rangle\mid x \in [0,1]^n\} \subseteq [0,1]^{k}.
\end{equation}
 Since $\mathscr{C}_\mathcal{E}$ is a set of functions from $\mathcal{E}=\{\varphi_1,\ldots, \varphi_k\}$ to $[0,1]$, we will equivalently regard it as a subset of either $[0,1]^k$ or $[0,1]^{\mathcal{E}}$.

As shown in \cite[Corollary 3.2]{F19},  $\mathscr{C}_\mathcal{E}$ can be defined without considering all elements $x\in [0,1]^n$.  Indeed,  let $\Delta$ be a regular complex\footnote{Recall that a {\em simplicial comple}x $\Delta$ is a nonempty finite set of simplexes such that: the face of each simplex in $\Delta$ belongs to $\Delta$, and for each pair of simplexes $T_1,T_2\in \Delta$  their intersection is either empty, or it coincides with a common face of $T_1$ and $T_2$. A regular complex is a simplicial complex with regular  simplexes (consult \cite{Ewald} for the unexplained notions).} linearizing the McNaughton functions $f_1,\ldots, f_k$. If $\bv_1,\ldots, \bv_t$ are the vertices of $\Delta$,  the above (\ref{eqCoherentSet}) reduces to
$$
\mathscr{C}_\mathcal{E}={\rm co}\{\langle f_1(\bv_j),\ldots, f_k(\bv_j)\rangle\mid j=1,\ldots, t\}. 
$$
By \cite[Corollary 3.2]{F19}, the definition of $\mathscr{C}_\mathcal{E}$ given above does not depend on the specific $\Delta$ we choose to linearize the McNaughton functions $f_{\varphi_i}$'s.

The following example shows how to construct $\mathscr{C}_{\mathcal{E}}$. 
\begin{example}\label{ex:cConstruction}
Consider  two events $\varphi_1,\varphi_2$ whose corresponding McNaughton functions are 
$$
f_{\varphi_1}(x,y)=x\vee y\mbox{ and }f_{\varphi_2}(x,y)=x\oplus y.
$$
 Set the regular complex of $[0,1]^2$ as in Figure \ref{fig1} and notice that it linearizes both $x\vee y$ and $x\oplus y$. The vertices of $\Delta$ are $\bv_1=\langle 0,0\rangle$, $\bv_2=\langle1,0\rangle$, $\bv_3=\langle0,1\rangle$, $\bv_4=\langle1,1\rangle$ and $\bv_5=\langle1/2,1/2\rangle$. 
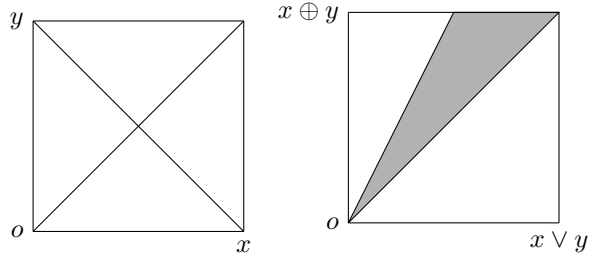
\begin{figure}[H]
\begin{center}
   \begin{tikzpicture}[scale=2.8] 
%
 \coordinate[label=left: {$o$}] (A) at (0,0) ; 
   \coordinate[label=below: {$x$}] (B) at (1,0);
       \coordinate (C) at (1,1);
   \coordinate[label=left: {$y$}](D) at (0,1) ; 

  \draw  (A) -- (B); 
  \draw  (A) -- (D);
  \draw  (B) -- (C);
  \draw  (D) -- (C);
  \draw (B)--(D);
  \draw (A)--(C);
  
  \end{tikzpicture}
    \begin{tikzpicture}[scale=2.8] 
    \filldraw[fill= black!30!white, draw=black]
 (0,0)--(1,1)--(1/2,1)--(0,0);
   
 \coordinate[label=left: {$o$}] (A) at (0,0) ; 
   \coordinate[label=below: {$x\vee y$}](B) at (1,0);
       \coordinate (C) at (1,1);
   \coordinate[label=left: {$x\oplus y$}](D) at (0,1) ; 

  \draw  (A) -- (B); 
  \draw  (A) -- (D);
  \draw  (B) -- (C);
  \draw  (D) -- (C);
  
  \end{tikzpicture}
\end{center}
  \caption{{\small  A triangulation of $[0,1]^2$ linearizing the events in $\mathcal{E}=\{x\vee y,x\oplus y\}$ (picture on the left) and the  set $\mathscr{C}_{\mathcal{E}}$ (on the right).}}\label{fig1}
\end{figure}
One hence obtains
\begin{center}
$
\langle f_{\varphi_1}\langle0,0\rangle, f_{\varphi_2}\langle0,0\rangle\rangle=\langle0 , 0\rangle; \langle f_{\varphi_1}\langle1,0\rangle, f_{\varphi_2}\langle1,0\rangle\rangle=\langle f_{\varphi_1}\langle0,1\rangle, f_{\varphi_2}\langle0,1\rangle\rangle=\langle f_{\varphi_1}\langle1,1\rangle, f_{\varphi_2}\langle1,1\rangle\rangle=\langle1 ,1\rangle,\mbox{ and }\langle f_{\varphi_1}\langle1/2,1/2\rangle, f_{\varphi_2}\langle1/2,1/2\rangle\rangle=\langle1/2 , 1\rangle.
$
\end{center}
Thus,  $\mathscr{C}_{\mathcal{E}}={\rm co}\{\langle0,0\rangle, \langle1,1\rangle, \langle1/2, 1\rangle\}$ 
as represented on the right-hand side of Figure \ref{fig1} in the space whose coordinates are labelled by the events $x\vee y$ and $x\oplus y$.

\end{example} 

Since every book $\beta$ on $\mathcal{E}$ can be regarded as a point $\langle \beta(\varphi_1),\ldots, \beta(\varphi_k)\rangle \in [0,1]^{k}$, the expression $\beta\in \mathscr{C}_{\mathcal{E}}$  makes sense. 
\begin{theorem}\label{thmCoheGeo}
For every set $\mathcal{E}=\{\varphi_1,\dots, \varphi_k\}$ of events in $n$ variables, the following conditions hold:
\begin{enumerate}
\item $\mathscr{C}_{\mathcal{E}}$ is a rational polytope of $[0,1]^k$ and it contains a Boolean point;
\item a book $\beta:\mathcal{E}\to[0,1]$ is coherent iff $\beta\in \mathscr{C}_\mathcal{E}$.
\end{enumerate}
\end{theorem}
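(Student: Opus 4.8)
The plan is to treat the two items separately, deriving (1) directly from the finite description of $\mathscr{C}_\mathcal{E}$ recalled above, and proving the equivalence in (2) by combining Mundici's Theorem \ref{thm:Mund1} with the structural characterization of states as limits of convex combinations of homomorphisms.

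For item (1), I would start from the reduced presentation $\mathscr{C}_\mathcal{E}={\rm co}\{\langle f_1(\bv_j),\ldots,f_k(\bv_j)\rangle\mid j=1,\ldots,t\}$, where $\bv_1,\ldots,\bv_t$ are the vertices of a regular complex $\Delta$ linearizing $f_{\varphi_1},\ldots,f_{\varphi_k}$. As the convex hull of finitely many points, $\mathscr{C}_\mathcal{E}$ is by definition a polytope. To see that it is \emph{rational}, I would use that the vertices of a regular complex are rational points of $[0,1]^n$ and that McNaughton functions are piecewise linear with integer coefficients; hence each coordinate $f_i(\bv_j)$ is rational, so the generating points, and therefore $\mathscr{C}_\mathcal{E}$, are rational. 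For the Boolean point, I would evaluate at any corner $c\in\{0,1\}^n$ of the cube: restricting the \luk\ operations to $\{0,1\}$ yields the Boolean operations, so $f_{\varphi_i}(c)\in\{0,1\}$ for every $i$, whence $\langle f_1(c),\ldots,f_k(c)\rangle$ is a Boolean point lying in $\{\langle f_1(x),\ldots,f_k(x)\rangle\mid x\in[0,1]^n\}\subseteq\mathscr{C}_\mathcal{E}$.

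For item (2), by Theorem \ref{thm:Mund1} it suffices to prove that a book $\beta$ extends to a state of $\free(n)$ if and only if $\beta\in\mathscr{C}_\mathcal{E}$. The direction from right to left is the easy one: if $\beta=\sum_j\lambda_j\langle f_1(\bv_j),\ldots,f_k(\bv_j)\rangle$ is a convex combination of the generating points, then the map $s=\sum_j\lambda_j\,\mathrm{ev}_{\bv_j}$, with $\mathrm{ev}_{\bv_j}(g)=g(\bv_j)$, is a convex combination of point-evaluation homomorphisms $\free(n)\to[0,1]$; since homomorphisms are states and the defining conditions (s1)--(s2) are preserved under convex combinations, $s$ is a state, and by construction $s(f_{\varphi_i})=\beta_i$, so $\beta$ is coherent. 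For the converse, I would invoke the recalled fact that every state of $\free(n)$ lies in the closure, inside $[0,1]^{\free(n)}$, of the convex hull of the homomorphisms $\free(n)\to[0,1]$, which by McNaughton's Theorem \ref{Them:ISO1} are exactly the point evaluations $\mathrm{ev}_x$, $x\in[0,1]^n$. Consider the continuous affine projection $\pi\colon[0,1]^{\free(n)}\to[0,1]^k$ reading off the coordinates indexed by $f_{\varphi_1},\ldots,f_{\varphi_k}$; it sends $\mathrm{ev}_x$ to $\langle f_1(x),\ldots,f_k(x)\rangle$, hence maps the set $H$ of homomorphisms onto the generating set of $\mathscr{C}_\mathcal{E}$. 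If $s$ extends $\beta$, then $\pi(s)=\beta$, and since $\pi$ is continuous and affine it carries the closed convex hull of $H$ into the closed convex hull of $\pi(H)$, giving $\beta=\pi(s)\in\overline{\rm co}\{\langle f_1(x),\ldots,f_k(x)\rangle\mid x\in[0,1]^n\}=\mathscr{C}_\mathcal{E}$.

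The main obstacle is precisely this left-to-right implication in (2): the argument hinges entirely on the structural description of states as the closed convex hull of homomorphisms, together with McNaughton's identification of homomorphisms with point evaluations. Everything else---rationality, the polytope structure, the Boolean point, and the easy implication---is a more or less direct computation. The one point demanding a little care is the passage of closures and convex hulls through $\pi$; here I would use that $\mathscr{C}_\mathcal{E}$ is compact (by item (1)), so that its closed and ordinary convex hulls coincide, making the inclusion $\pi(\overline{\rm co}\,H)\subseteq\overline{\rm co}\,\pi(H)=\mathscr{C}_\mathcal{E}$ immediate.
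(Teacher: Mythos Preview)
Your proof is correct. The paper's own proof is very terse: it simply cites \cite[Corollary 3.2]{F19} for the rational-polytope claim and \cite[Corollary 5.4]{Mu06} for the coherence characterization in (2), and spells out only the Boolean-point argument, which is identical to yours. What you do instead is unpack those citations using material already recalled in the paper: the finite description of $\mathscr{C}_\mathcal{E}$ via a linearizing regular complex (for rationality and the polytope structure), and the Kroupa--Panti-type fact that states lie in the closed convex hull of homomorphisms (for the hard direction of (2)). This buys a self-contained argument at the cost of a little extra work; the paper buys brevity by deferring to the literature.

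One small imprecision worth flagging: the identification of homomorphisms $\free(n)\to[0,1]$ with point evaluations $\mathrm{ev}_x$, $x\in[0,1]^n$, is not a consequence of McNaughton's Theorem~\ref{Them:ISO1} alone. McNaughton only gives the functional representation $\free(n)\cong\mathcal{M}(n)$; to conclude that every homomorphism to $[0,1]$ is evaluation at some point you also need that the maximal spectrum of $\free(n)$ is (homeomorphic to) $[0,1]^n$, which is the content of results such as \cite[Corollary~6.4]{MuAdvanced} invoked later in the paper. The argument still goes through, but the attribution should be adjusted.
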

\begin{proof}
The first claim of (1) and the claim (2) are \cite[Corollary 3.2]{F19} and \cite[Corollary 5.4]{Mu06} respectively. It is hence left to show that every $\mathscr{C}_\mathcal{E}$ contains a Boolean point, that is a to say a vertex of the cube $[0,1]^k$. This last claim directly follows from the definition of $\mathscr{C}_\mathcal{E}$ together with the fact that each McNaughton function $f:[0,1]^n\to[0,1]$ only takes value in $\{0,1\}$ once restricted to $\{0,1\}^n$. Thus, for every $x\in\{0,1\}^n$, the point of $\mathscr{C}_\mathcal{E}$ of the form $\langle f_{\varphi_1}(x),\ldots,f_{\varphi_k}(x)\rangle$ belongs to $ \{0,1\}^k$ which settles the claim.  
\end{proof}

The next result is hence a corollary of Theorem \ref{thm:compl2}, Theorem \ref{thm:Mund1} and the observation that every state determines coherent books once restricted on finite subsets of its domain, as highlighted in Theorem \ref{thmCoheGeo}. Furthermore, and in light of the above argument, if $\Phi=t[P(\varphi_1),\ldots, P(\varphi_k)]$ is a formula in $\pfm$ and $\beta$ is a coherent book on $\mathcal{E}=\{\varphi_1,\ldots, \varphi_k\}$, we will write 
$$\beta\models \Phi \;\mbox{ iff }\; t^{[0,1]_{MV}}[\beta(\varphi_1),\ldots, \beta(\varphi_k)]=1.$$ 
Clearly, if $\beta\models \Phi$, then for every state $s$ that extends $\beta$, it holds that $s\models \Phi$.

\begin{corollary}\label{ComplCoherBooks}
Let $\Phi=t[P(\varphi_1),\ldots, P(\varphi_k)]$ and $\Psi=r[P(\psi_1),\ldots, P(\psi_m)]$ be formulas from $\pfm$ and let $\mathcal{E}$ be the set of events occurring in $\Phi$ and $\Psi$. 
Then $\Phi\vdash_{FP}\Psi$ iff for all $\beta\in \mathscr{C}_{\mathcal{E}}$ such that $\beta\models \Phi$, then $\beta\models \Psi$. 
\end{corollary}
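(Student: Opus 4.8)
The plan is to take Theorem~\ref{thm:compl2} as the deductive engine and to bridge its state-based criterion with the coherent-book formulation through Mundici's Theorem~\ref{thm:Mund1} and the geometric characterisation of coherence in Theorem~\ref{thmCoheGeo}(2). First I would fix $n=|Var(\Phi,\Psi)|$, so that $\mathcal{E}=\{\varphi_1,\ldots,\varphi_k,\psi_1,\ldots,\psi_m\}$ is a finite set of events on $n$ variables and any book $\beta:\mathcal{E}\to[0,1]$ assigns a value to every atomic modal formula occurring in $\Phi$ or $\Psi$. The one elementary fact I would record at the outset is that $s\models\Phi$ (resp.\ $\beta\models\Phi$) depends only on the tuple $(s(f_{\varphi_1}),\ldots,s(f_{\varphi_k}))$ (resp.\ $(\beta(\varphi_1),\ldots,\beta(\varphi_k))$) through the interpretation $t^{[0,1]_{MV}}$; hence whenever a state $s$ extends a book $\beta$ on $\mathcal{E}$ one has $s\models\Phi\iff\beta\models\Phi$, and likewise for $\Psi$.

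For the forward implication I would assume $\Phi\vdash_{FP}\Psi$ and pick any $\beta\in\mathscr{C}_{\mathcal{E}}$ with $\beta\models\Phi$. By Theorem~\ref{thmCoheGeo}(2) such a $\beta$ is coherent, so Mundici's Theorem~\ref{thm:Mund1} yields a state $s$ of $\free(n)$ extending it; the opening observation gives $s\models\Phi$, Theorem~\ref{thm:compl2} gives $s\models\Psi$, and transporting back along the extension gives $\beta\models\Psi$, as required.

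For the converse I would aim to verify the hypothesis of Theorem~\ref{thm:compl2}. Given a state $s$ of $\free(n)$ with $s\models\Phi$, I would form the book $\beta:\chi\mapsto s(f_\chi)$ on $\mathcal{E}$. Since $\beta$ is the restriction of a state, it is coherent, so Theorem~\ref{thmCoheGeo}(2) places $\beta\in\mathscr{C}_{\mathcal{E}}$, and the observation gives $\beta\models\Phi$; the hypothesis then forces $\beta\models\Psi$, which transports back to $s\models\Psi$. Theorem~\ref{thm:compl2} now concludes $\Phi\vdash_{FP}\Psi$.

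The whole argument is essentially bookkeeping, and the only point I would treat with care is the two-way dictionary between states of $\free(n)$ and coherent books on $\mathcal{E}$: one direction is Mundici's extension theorem, the other is that every state restricts to a coherent book, so that $\mathscr{C}_{\mathcal{E}}$ is exactly the image of the state space under restriction to $\mathcal{E}$. Notably, beyond this identity of sets I do not expect to need the rational-polytope structure of Theorem~\ref{thmCoheGeo}(1); it serves only to describe the object $\mathscr{C}_{\mathcal{E}}$ over which the right-hand quantifier ranges.
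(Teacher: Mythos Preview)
Your proof is correct and follows exactly the approach the paper intends: the paper states this result as an immediate corollary of Theorem~\ref{thm:compl2}, Theorem~\ref{thm:Mund1}, and the observation that restricting a state to a finite set of events yields a coherent book (Theorem~\ref{thmCoheGeo}(2)), which is precisely the two-way dictionary you spell out. The only difference is that you supply the routine details the paper leaves implicit.
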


What we showed so far makes it clear that every finite set of $k$ (\luk) events $\mathcal{E}$ determines  the set $\mathscr{C}_\mathcal{E}$ of all possible coherent books that a bookmaker can define on them. Convex sets of this kind will be formally defined below and called {\em coherent sets}. In the remaining of this section we will present a geometric description of them and prove  some basic properties. 
\begin{definition} \label{defCoheSet}
A convex subset $\mathscr{C}$ of $[0,1]^k$ is said to be a {\em coherent set} if there exists a set of events $\mathcal{E}=\{\varphi_1,\ldots, \varphi_k\}$  such that $\mathscr{C}=\mathscr{C}_\mathcal{E}$.
\end{definition}
Coherent sets are not determined by a unique choice of $\mathcal{E}$. 
For instance, it is easy to see that the same coherent set corresponds to both $\mathcal{E}=\{x\wedge y, x\oplus y\}$ and $\mathcal{E}'=\{x\odot y, x\oplus y\}$. That is to say, $\mathscr{C}_{\mathcal{E}}=\mathscr{C}_{\mathcal{E}'}$. 

Notice also that coherent sets are not compositional. This means that, if $\mathcal{E}$ and $\mathcal{E}'$ are two sets of events, then there is no general geometric construction that allows one to define $\mathscr{C}_{\mathcal{E}\cup\mathcal{E}'}$ from $\mathscr{C}_\mathcal{E}$ and $\mathscr{C}_{\mathcal{E}'}$. However, the {\em projection} of coherent sets to lower dimensional spaces does yield a coherent set.

\begin{proposition}\label{prop:restrictC}
Let $\mathcal{E}=\{\varphi_1,\ldots, \varphi_k\}$ a set of events. Then for every subset $\mathcal{E'}$ of  $\mathcal{E}$, $\mathscr{C}_{\mathcal{E}'}$ coincides with the projection of $\mathscr{C}_\mathcal{E}$ to $[0,1]^{\mathcal{E}'}$. 
\end{proposition}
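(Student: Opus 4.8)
The plan is to reduce the statement to the elementary fact that a coordinate projection, being linear, commutes with the formation of convex hulls, and then to read the result off the finitary description of coherent sets. After relabelling the events I may assume $\mathcal{E}'=\{\varphi_1,\ldots,\varphi_j\}$ for some $j\leq k$, and I write $\pi\colon[0,1]^{\mathcal{E}}\to[0,1]^{\mathcal{E}'}$ for the projection that forgets the last $k-j$ coordinates; the goal becomes precisely $\pi(\mathscr{C}_\mathcal{E})=\mathscr{C}_{\mathcal{E}'}$. First I would fix one regular complex $\Delta$ of $[0,1]^n$ that simultaneously linearizes all of $f_{\varphi_1},\ldots,f_{\varphi_k}$. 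Since $\mathcal{E}'\subseteq\mathcal{E}$, this very same $\Delta$ also linearizes the subfamily $f_{\varphi_1},\ldots,f_{\varphi_j}$, so by the independence of the construction from the chosen complex (\cite[Corollary 3.2]{F19}) I may compute both coherent sets from the same vertices $\bv_1,\ldots,\bv_t$ of $\Delta$:
\[
\mathscr{C}_\mathcal{E}={\rm co}\{\langle f_{\varphi_1}(\bv_l),\ldots,f_{\varphi_k}(\bv_l)\rangle\mid l=1,\ldots,t\},\qquad \mathscr{C}_{\mathcal{E}'}={\rm co}\{\langle f_{\varphi_1}(\bv_l),\ldots,f_{\varphi_j}(\bv_l)\rangle\mid l=1,\ldots,t\}.
\]

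The key observation is then the pointwise identity $\pi\langle f_{\varphi_1}(\bv_l),\ldots,f_{\varphi_k}(\bv_l)\rangle=\langle f_{\varphi_1}(\bv_l),\ldots,f_{\varphi_j}(\bv_l)\rangle$, valid because both tuples evaluate the \emph{same} McNaughton functions at the \emph{same} point $\bv_l$. Thus $\pi$ carries the finite generating set of $\mathscr{C}_\mathcal{E}$ exactly onto the finite generating set of $\mathscr{C}_{\mathcal{E}'}$. Combining this with the standard fact that an affine map commutes with convex hulls, ${\rm co}(\pi(X))=\pi({\rm co}(X))$ for finite $X$, I would obtain $\pi(\mathscr{C}_\mathcal{E})=\mathscr{C}_{\mathcal{E}'}$ directly.

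The only points needing care---and the place where the argument could look shakier than it is---concern the closure operator in definition (\ref{eqCoherentSet}) and the legitimacy of the finite-vertex representation. Here I would note that $[0,1]^n$ is compact and each $f_{\varphi_i}$ is continuous, so the generating set is compact, its convex hull is already closed, and in fact $\mathscr{C}_\mathcal{E}$ is a rational polytope by Theorem \ref{thmCoheGeo}(1); hence $\overline{\rm co}={\rm co}$ on these sets and no closure issue arises, while the projection of a polytope is again a polytope. As a cross-check, and to sidestep the geometry entirely, one can argue semantically through Mundici's Theorem \ref{thm:Mund1}: a book $\beta'$ lies in $\mathscr{C}_{\mathcal{E}'}$ iff it is coherent iff it extends to a state $s$ of $\free(n)$; restricting $s$ to $\mathcal{E}$ yields a coherent book $\beta\in\mathscr{C}_\mathcal{E}$ with $\pi(\beta)=\beta'$, and conversely restricting an extending state of any $\beta\in\mathscr{C}_\mathcal{E}$ to $\mathcal{E}'$ produces $\pi(\beta)\in\mathscr{C}_{\mathcal{E}'}$, which gives the two inclusions.
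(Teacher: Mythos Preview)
Your proposal is correct. In fact you give two independent arguments, and your \emph{cross-check} via Mundici's Theorem~\ref{thm:Mund1} is precisely the route the paper takes: the paper's one-sentence proof amounts to ``restrictions of coherent books are coherent, and conversely every coherent book on $\mathcal{E}'$ extends to a state and hence to a coherent book on $\mathcal{E}$.'' Your semantic paragraph is, if anything, more explicit about the two inclusions than the paper itself.

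Your \emph{primary} argument is a genuinely different approach. Rather than invoking the state-theoretic characterization of coherence, you work directly with the finitary polytope description $\mathscr{C}_\mathcal{E}={\rm co}\{\langle f_{\varphi_1}(\bv_l),\ldots,f_{\varphi_k}(\bv_l)\rangle\}$ from \cite[Corollary~3.2]{F19}, observe that a single regular complex linearizes both families, and then use that an affine map commutes with convex hulls of finite sets. This is more elementary in that it avoids Theorem~\ref{thm:Mund1} entirely and stays purely within convex geometry; the paper's route is shorter but leans on the equivalence between coherence and state-extendability. Both are perfectly sound; your geometric argument has the added virtue of making the polytope structure of the projection transparent without appealing to Theorem~\ref{thmCoheGeo}.
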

\begin{proof}
The claim immediately follows observing that the projection of $\mathscr{C}_\mathcal{E}$ to $[0,1]^{\mathcal{E}'}$ consists of all coherent books on $\mathcal{E}'$ and hence it coincides with $\mathscr{C}_{\mathcal{E}'}$. That is to say, for every coherent book $\beta:\mathcal{E}\to[0,1]$, its restriction to $\mathcal{E}'$ is coherent as well.
\end{proof}
The following example gives a geometric intuition of the above result and it also is meant to clarify the non-compositionality of coherent sets.

\begin{example}
Let us consider the set of events $\mathcal{E}=\{\varphi_1,\varphi_2,\varphi_3\}$ in two variables whose McNaughton functions respectively are
$$
f_{\varphi_1}(x,y)=x\oplus y,\; f_{\varphi_2}(x,y)= x\odot y, \;f_{\varphi_3}(x,y)=  x\wedge y. 
$$ In order to describe $\mathscr{C}_\mathcal{E}$ notice that the triangulation on the left-hand side of Figure \ref{fig1} linearizes the $f_{\varphi_i}$'s. Thus, a direct computation shows that 
$$
\mathscr{C}_\mathcal{E}={\rm co}\{\langle0,0,0\rangle, \langle1,0,0\rangle, \langle1,1,1\rangle, \langle1, 1/2, 0\rangle\}
$$
as in the top left of Figure \ref{fig2}. 

The projections of $\mathscr{C}_\mathcal{E}$ to the squares $[0,1]^{(x\oplus y, x\wedge y)}$, $[0,1]^{(x\odot y, x\wedge y)}$ and $[0,1]^{(x\oplus y, x\odot y)}$ are respectively as in the top-right, bottom-left and bottom-right of the same Figure \ref{fig2} and it is immediate to see that they correspond  to the coherent sets of the events $\mathcal{E}'=\{\varphi_1,\varphi_2\}$, $\mathcal{E}''=\{\varphi_2,\varphi_3\}$ and $\mathcal{E}'''=\{\varphi_2,\varphi_3\}$ respectively. 

Finally, notice what we remarked below Definition \ref{defCoheSet}: although $\mathcal{E}'\neq\mathcal{E}'''$, their coherent sets coincide as subset of $[0,1]^2$.   
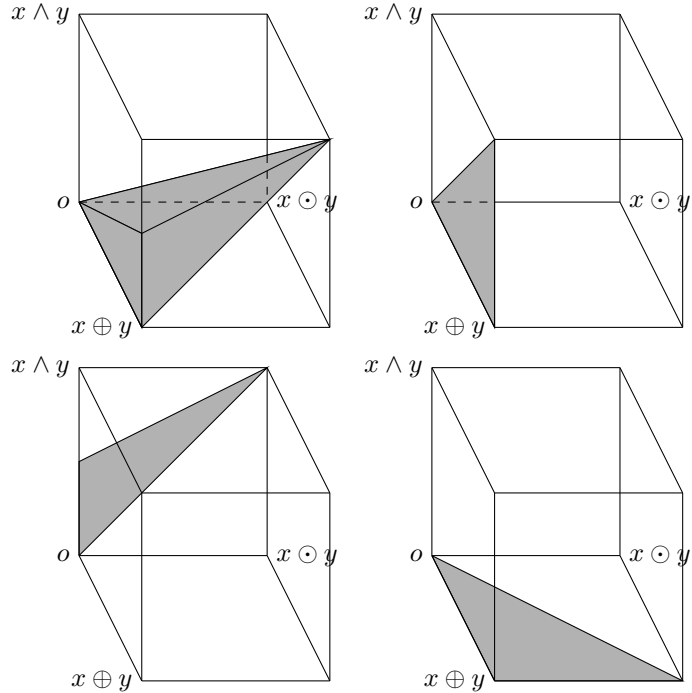
\begin{figure}[H]
\begin{center}
    \begin{tikzpicture}[scale=2.5]

 \coordinate[label=left: {$o$}] (A) at (0,0) ; 
   \coordinate[label=right: {$x\odot y$}](B) at (1,0);
       \coordinate (C) at (1,1);
   \coordinate[label=left: {$x\wedge y$}] (D) at (0,1) ; 
      \coordinate[label=left: {$x\oplus y$}] (E) at (1/3,-2/3);
      \coordinate (F) at (4/3,-2/3);
      \coordinate (G) at (4/3,1/3);
      \coordinate (H) at (1/3,1/3);
      \coordinate (I) at (1/3, -1/6); 
 \coordinate(L) at (1, 1/4);
 
       \filldraw[fill= black!30!white, draw=black]
 (A)--(E)--(G)--(A);
        \filldraw[fill= black!30!white, draw=black]
 (A)--(I)--(G)--(A);
       \filldraw[fill= black!30!white, draw=black]
 (A)--(E)--(I)--(A);

  \draw[dashed]  (A) -- (B); 
  \draw[dashed]  (L) -- (B);
  \draw  (C) -- (L);
  \draw  (D) -- (A);
    \draw  (D) -- (C);
  \draw  (E) -- (F); 
  \draw  (F) -- (G);
  \draw  (G) -- (H);
  \draw  (H) -- (E);
    \draw  (A) -- (E); 
  \draw  (B) -- (F);
  \draw  (C) -- (G);
  \draw  (D) -- (H);

  \end{tikzpicture}
    \begin{tikzpicture}[scale=2.5]

 \coordinate[label=left: {$o$}] (A) at (0,0) ; 
   \coordinate[label=right: {$x\odot y$}](B) at (1,0);
       \coordinate (C) at (1,1);
   \coordinate[label=left: {$x\wedge y$}] (D) at (0,1) ; 
      \coordinate[label=left: {$x\oplus y$}] (E) at (1/3,-2/3);
      \coordinate (F) at (4/3,-2/3);
      \coordinate (G) at (4/3,1/3);
      \coordinate (H) at (1/3,1/3);
      \coordinate (I) at (1/3, -1/6); 
 \coordinate(L) at (1, 1/4);
 \coordinate(N) at (1/3,0);
 
       \filldraw[fill= black!30!white, draw=black]
 (A)--(E)--(H)--(A);
   
  \draw[dashed]  (A) -- (N);
  \draw (N)--(B); 
  \draw  (L) -- (B);
  \draw  (C) -- (L);
  \draw  (D) -- (A);
    \draw  (D) -- (C);
  \draw  (E) -- (F); 
  \draw  (F) -- (G);
  \draw  (G) -- (H);
  \draw  (H) -- (E);
    \draw  (A) -- (E); 
  \draw  (B) -- (F);
  \draw  (C) -- (G);
  \draw  (D) -- (H);

  \end{tikzpicture}

    \begin{tikzpicture}[scale=2.5]

 \coordinate[label=left: {$o$}] (A) at (0,0) ; 
   \coordinate[label=right: {$x\odot y$}](B) at (1,0);
       \coordinate (C) at (1,1);
   \coordinate[label=left: {$x\wedge y$}] (D) at (0,1) ; 
      \coordinate[label=left: {$x\oplus y$}] (E) at (1/3,-2/3);
      \coordinate (F) at (4/3,-2/3);
      \coordinate (G) at (4/3,1/3);
      \coordinate (H) at (1/3,1/3);
      \coordinate (I) at (1/3, -1/6); 
 \coordinate(L) at (1, 1/4);
 \coordinate(M) at (0,1/2);
 
       \filldraw[fill= black!30!white, draw=black]
 (A)--(M)--(C)--(A);

  \draw (A) --(B);
  \draw  (C) -- (B);
  \draw  (D) -- (A);
    \draw  (D) -- (C);
  \draw  (E) -- (F); 
  \draw  (F) -- (G);
  \draw  (G) -- (H);
  \draw  (H) -- (E);
    \draw  (A) -- (E); 
  \draw  (B) -- (F);
  \draw  (C) -- (G);
  \draw  (D) -- (H);

  \end{tikzpicture}
    \begin{tikzpicture}[scale=2.5]

 \coordinate[label=left: {$o$}] (A) at (0,0) ; 
   \coordinate[label=right: {$x\odot y$}](B) at (1,0);
       \coordinate (C) at (1,1);
   \coordinate[label=left: {$x\wedge y$}] (D) at (0,1) ; 
      \coordinate[label=left: {$x\oplus y$}] (E) at (1/3,-2/3);
      \coordinate (F) at (4/3,-2/3);
      \coordinate (G) at (4/3,1/3);
      \coordinate (H) at (1/3,1/3);
      \coordinate (I) at (1/3, -1/6); 
 \coordinate(L) at (1, 1/4);
 
       \filldraw[fill= black!30!white, draw=black]
 (A)--(F)--(E)--(A);
  
  \draw (A) -- (B); 
  \draw  (C) -- (B);
  \draw  (D) -- (A);
    \draw  (D) -- (C);
  \draw  (E) -- (F); 
  \draw  (F) -- (G);
  \draw  (G) -- (H);
  \draw  (H) -- (E);
    \draw  (A) -- (E); 
  \draw  (B) -- (F);
  \draw  (C) -- (G);
  \draw  (D) -- (H);

  \end{tikzpicture}
\end{center}
  \caption{{\small  The coherent set $\mathscr{C}_{\mathcal{E}}$ for $\mathcal{E}=\{x\oplus y, x\odot y, x\wedge y\}$ and its three projections on $[0,1]^{(x\oplus y, x\wedge y)}$, $[0,1]^{(x\odot y, x\wedge y)}$ and $[0,1]^{(x\oplus y, x\odot y)}$ respectively.}}\label{fig2}
\end{figure}
\end{example}

 Let us end this section with the following useful observation.
\begin{remark}\label{rem:Algo} A direct consequence of Proposition \ref{PropPolyForm} is that, for every rational polyhedron $\mathscr{P}\subseteq [0,1]^k$, there exist equi-provable, yet not logically equivalent, formulas such that the onesets of their McNaughton functions are all $\mathscr{P}$.
However,  \cite[Proposition 5.3]{F19} provides an algorithm  that for every rational polyhedron $\mathscr{P}$ determines a specific formula $\chi_\mathscr{P}$ with the above property.  
This argument clearly applies also to coherent sets as the latter are convex rational polyhedra. 
Therefore, for every set of formulas $\mathcal{E}=\{\varphi_1,\ldots, \varphi_k\}$ we will henceforth denote by $\chi_\mathcal{E}$ {\em the} \luk\ formula picked by the above mentioned algorithm such that the oneset of $f_{\chi_\mathcal{E}}$ is $\mathscr{C}_\mathcal{E}$.
\end{remark}

\section{Local reduction of \FPL\ to \luk\ calculus}\label{sec4}
In this section we are going to show how to  encode the language, and locally reduce the deducibility relation, of the probability logic $\FPL$ to propositional \luk\ calculus. Such local reduction is essentially inspired by the previous Corollary \ref{ComplCoherBooks} and it is possible thanks to a translation map from the modal language of $\FPL$ to that of \L.

The idea of translating probability formulas to the propositional language of \luk\ logic is  not new and, in fact, it was the main tool used in \cite{HGE} to prove soundness and completeness for the probability logic on classical events w.r.t. probability spaces. Moreover it has been also adopted in \cite{FG07} for similar purposes (see also \cite{FGM11b, CN} for a more exhaustive discussion). In the more recent paper \cite{BCN} the same idea has been finally employed to present a hypersequent calculus for the probability logic introduced in \cite{HGE} of which $\FPL$ is a proper generalization.

Let us hence start defining the translation map $^\bullet$ from modal formulas of ${\bf PFm}$ to the propositional language ${\bf Fm}$ of \luk\ logic in the following inductive manner:
\begin{itemize}
\item[(T1)] $(\bot)^\bullet=\bot$ and $(\top)^\bullet=\top$;
\item[(T2)] For every atomic modal formula $P(\varphi)$, let $p_\varphi$ be a symbol for a fresh variable in \luk\ language and let $P(\varphi)^\bullet=p_\varphi$;
\item[(T3)] If $\Phi=t[P(\varphi_1),\ldots, P(\varphi_k)]$ is a compound modal formula, then $\Phi^\bullet=t[P(\varphi_1)^\bullet,\ldots, P(\varphi_k)^\bullet]=t[p_{\varphi_1},\ldots, p_{\varphi_k}]$. 
\end{itemize}

\begin{remark}[Probabilistic substitution]\label{remark:probsub}
The translation just introduced between modal formulas from $\pfm$ and propositional \luk\ formulas on variables $p_{\varphi_i}$'s might suggest to define a notion of substitution on modal formulas from $\pfm$ as any map 
\begin{equation}\label{eqSubsti}
\sigma:\{P(\varphi_1),\ldots, P(\varphi_k)\}\to \pfm.
\end{equation}
Any such $\sigma$ gives, modulo $^\bullet$, a typical \luk\ substitution. However, notice that $\vdash_{FP}$ does not satisfy the property of {\em substitution invariance}\footnote{Recall that  the entailment relation $\vdash_{\mathcal{L}}$ of a sentential logic $\mathcal{L}$ satisfies the property of substitution invariance (or {\em structurality}) if for every set of formulas $\Gamma\cup\{\varphi\}$ and for every substitutions $\sigma$, if $\Gamma\vdash_{\mathcal{L}}\varphi$, then $\sigma\Gamma\vdash_{\mathcal{L}}\sigma\varphi$ (see \cite[Definition 1.4]{FontBook} for further details).} under maps defined as in (\ref{eqSubsti}). Indeed, consider $\Phi=(P(x\vee \neg x))^2$ and $\Psi=(\neg P(x\vee \neg x))^2\vee (P(x\vee \neg x))^2$. Notice that $\mathscr{C}_{x\vee \neg x}=[1/2, 1]$ and, for all $\beta\in [1/2, 1]$, 
$$
\beta\models (P(x\vee \neg x))^2\leftrightarrow(\neg P(x\vee \neg x))^2\vee (P(x\vee \neg x))^2.
$$
Indeed, $\beta^2= (1-\beta)^2\vee\beta^2$ since for all $\beta\geq 1/2$, $(1-\beta)^2\leq\beta^2$. Therefore, by Corollary \ref{ComplCoherBooks}, 
$$
\vdash_{FP}\Phi\leftrightarrow\Psi.
$$
Consider the map $\sigma: \{P(x\vee\neg x)\}\to \pfm$ such that $\sigma(P(x\vee\neg x))=P(y)$. Then, $\not\vdash_{FP}\sigma(\Phi)\leftrightarrow\sigma(\Psi)$, where of course $\sigma(\Phi)=(P(y))^2$ and $\sigma(\Psi)=(\neg P(y))^2\vee P(y)^2$. Indeed, notice that  $\mathscr{C}_{y}=[0, 1]$. Thus, if we pick any $\beta \in [0, 1/2)$, we get that  $\beta^2 = 0$ and $ (1-\beta)^2\vee\beta^2 > 0$, since $1 - \beta > 1/2$. Therefore, again by Corollary \ref{ComplCoherBooks}, 
$$
\not\vdash_{FP}\sigma(\Phi)\leftrightarrow\sigma(\Psi).
$$
\end{remark}
The rationale behind the previous remark is that, since we want to regard atomic modal formulas of $\FPL$ as variables, in order for $\vdash_{FP}$ to satisfy substitution invariance, we need to make sure that such ``variables'' are evaluated in coherent sets. This fact leads to the following notion.
\begin{definition}[Probabilistic substitution]\label{defProbSub}
Let $\mathcal{E}=\{\varphi_1,\ldots, \varphi_k\}$ a set of events. A map $\sigma:\{P(\varphi_i)\mid i=1,\ldots, k\}\to \pfm$ is a {\em probabilistic substitution} if for all $\Phi,\Psi\in \pfm$ on atomic modal formulas $\{P(\varphi_i)\mid i=1,\ldots, k\}$, if $\vdash_{FP}\Phi\leftrightarrow\Psi$, then $\vdash_{FP}\sigma(\Phi)\leftrightarrow\sigma(\Psi)$. 
\end{definition}

The translation $^\bullet$ allows to translate deductions of $\FPL$ to \luk\ logic, by means of an infinite theory that interpretes all the instances of probability axioms. 
 Since standard completeness of \luk\ logic does not extend to deductions from infinite theories, the interaction between the syntax and the standard semantics of \luk\ logic fails when dealing with infinite theories.
A way to solve this issue is to locally reduces the deducibility relation of $\FPL$ to \luk\ calculus via de Finetti's coherence. The following theorem shows how to characterize deductions of $\FPL$  syntactically in \luk\ logic, from a geometrical viewpoint via coherent sets, and by an algebraic point of view in MV-algebras. For the next statement recall how $\chi_\mathcal{E}$ is defined in Remark \ref{rem:Algo}. 
\begin{theorem}\label{thm:red1}
Let $\Phi,\Psi\in \pfm$ and let $\mathcal{E}$ be the set of events occurring in them. Then, the following conditions are equivalent:
\begin{enumerate}
\item $\Phi\vdash_{FP}\Psi$;
\item $\chi_{\mathcal{E}},\Phi^\bullet\vdash_{\lu}\Psi^\bullet$;
\item $\mathscr{C}_{\mathcal{E}}\cap \mo(\Phi^\bullet)\subseteq\mo(\Psi^\bullet)$;
\item The quasiequation $((\chi_\mathcal{E}{\rm\; and\; }\Phi^\bullet) {\rm\; implies\; }\Psi^\bullet)$ holds in all MV-algebras. 
\end{enumerate}
\end{theorem}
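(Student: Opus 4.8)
The plan is to prove the cycle of implications $(1)\Rightarrow(2)\Rightarrow(3)\Rightarrow(4)\Rightarrow(1)$ (or a convenient reordering), exploiting the dictionary already established in the excerpt: Corollary \ref{ComplCoherBooks} connects $\vdash_{FP}$ to coherent books, Remark \ref{rem:Algo} pins down $\chi_\mathcal{E}$ as a formula whose oneset is exactly $\mathscr{C}_\mathcal{E}$, and Theorem \ref{Them:ISO1} together with Proposition \ref{PropPolyForm} provides the functional/geometric reading of \luk\ provability. The key conceptual point throughout is that the fresh variables $p_{\varphi_i}=P(\varphi_i)^\bullet$ introduced by the translation $^\bullet$ play the role, on the \luk\ side, of the coordinates $\beta(\varphi_i)$ of a book; a valuation of these variables into $[0,1]$ is literally a book $\beta$ on $\mathcal{E}$, and imposing $\chi_\mathcal{E}$ forces that book to lie in the coherent set $\mathscr{C}_\mathcal{E}$.

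First I would establish the equivalence of (2), (3) and (4), which are really three faces of the same \luk-logical fact. The equivalence $(2)\Leftrightarrow(3)$ is the semantic/geometric content of \luk\ deducibility: by Chang completeness (and the functional representation of $\free(k)$) a finite-premise deduction $\chi_\mathcal{E},\Phi^\bullet\vdash_{\lu}\Psi^\bullet$ holds iff every valuation $e$ into $[0,1]_{MV}$ that sends both $\chi_\mathcal{E}$ and $\Phi^\bullet$ to $1$ also sends $\Psi^\bullet$ to $1$; writing such a valuation as a point $\beta\in[0,1]^k$, the condition $e(\chi_\mathcal{E})=1$ means $\beta\in{\rm O}(f_{\chi_\mathcal{E}})=\mathscr{C}_\mathcal{E}$ by Remark \ref{rem:Algo}, while $e(\Phi^\bullet)=1$ means $\beta\in\mo(\Phi^\bullet)$, so the deduction is exactly the inclusion $\mathscr{C}_\mathcal{E}\cap\mo(\Phi^\bullet)\subseteq\mo(\Psi^\bullet)$. (Here I am reading $\mo(\cdot)$ as the set of models/satisfying valuations in the standard MV-algebra.) The equivalence $(2)\Leftrightarrow(4)$ is the standard algebraizability translation: since \luk\ is algebraizable with equivalent algebraic semantics $\mathsf{MV}$, a deduction $\alpha,\gamma\vdash_{\lu}\delta$ corresponds precisely to the quasi-identity $(\alpha\approx\top\ \&\ \gamma\approx\top)\Rightarrow(\delta\approx\top)$ holding in all MV-algebras, and $\alpha\approx\top$ can be absorbed into the term $\alpha\ \mathrm{and}\ \gamma$ (meet) so that the displayed quasiequation is exactly the algebraic rendering of (2). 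One subtlety worth a sentence is that here the relevant inclusion need only be checked over the standard algebra (giving (3)) yet (4) asserts it over all MV-algebras; this is legitimate precisely because the standard MV-algebra generates $\mathsf{MV}$ as a \emph{quasivariety}, a fact already recorded in the excerpt via \cite{GiMu}.

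The remaining and genuinely probabilistic step is to tie (1) to the \luk-side statements, and I would route it through Corollary \ref{ComplCoherBooks}. That corollary already says $\Phi\vdash_{FP}\Psi$ iff for all coherent books $\beta\in\mathscr{C}_\mathcal{E}$, $\beta\models\Phi$ implies $\beta\models\Psi$. The only thing to verify is that $\beta\models\Phi$ (evaluating the MV-term $t$ of $\Phi$ at the real coordinates $\beta(\varphi_i)$ and asking for value $1$) coincides with $\beta\in\mo(\Phi^\bullet)$, and likewise for $\Psi$. But this is immediate from the definition of $^\bullet$ in (T1)--(T3): the translation replaces each atomic $P(\varphi_i)$ by a fresh variable $p_{\varphi_i}$ and leaves the term structure $t$ untouched, so evaluating $\Phi^\bullet$ at the point $\beta$ (reading $p_{\varphi_i}\mapsto\beta(\varphi_i)$) gives $t^{[0,1]_{MV}}[\beta(\varphi_1),\dots,\beta(\varphi_k)]$, which is $1$ exactly when $\beta\models\Phi$. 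Hence $(1)$ is literally the condition in $(3)$, closing the loop.

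The main obstacle, and the point I would be most careful about, is bookkeeping on the variables: $\Phi$ and $\Psi$ may involve different atomic modal formulas, so before applying Corollary \ref{ComplCoherBooks} one must ensure that $\mathcal{E}$ collects \emph{all} events occurring in both formulas and that $\Phi^\bullet,\Psi^\bullet,\chi_\mathcal{E}$ are all read inside the same $\free(k)$ on the common variable set $\{p_{\varphi}\mid\varphi\in\mathcal{E}\}$. This is exactly the alignment enabled by Proposition \ref{propNumVariables}, which lets us rewrite $\Phi,\Psi$ (up to $\vdash_{FP}$-equivalence) so that they share all the atomic modal formulas $P(\varphi_i)$; I would invoke it at the outset so that $\mo(\Phi^\bullet)$, $\mo(\Psi^\bullet)$ and $\mathscr{C}_\mathcal{E}$ live in one ambient cube $[0,1]^k$ and the inclusion in (3) is unambiguous. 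A secondary check is that $\chi_\mathcal{E}$ indeed cuts out $\mathscr{C}_\mathcal{E}$ as an oneset rather than merely up to provable equivalence, which is precisely why the \emph{specific} formula furnished by the algorithm of Remark \ref{rem:Algo} is needed; with that fixed, all four conditions become restatements of a single inclusion of rational polyhedra and the proof is complete.
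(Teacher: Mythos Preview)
Your proposal is correct and follows essentially the same route as the paper: both arguments hinge on Corollary \ref{ComplCoherBooks} to identify $\vdash_{FP}$ with validity under all coherent books, then observe that a $[0,1]_{MV}$-valuation of the translated variables $p_{\varphi_i}$ satisfying $\chi_\mathcal{E}$ is precisely a point of $\mathscr{C}_\mathcal{E}$, while $(2)\Leftrightarrow(3)\Leftrightarrow(4)$ are the standard geometric and algebraic restatements of finite-premise \luk\ deducibility. The only cosmetic difference is that the paper proves $(1)\Leftrightarrow(2)$ directly whereas you pass through $(1)\Leftrightarrow(3)$, and your remark on quasivariety generation really belongs to the step $(3)\Leftrightarrow(4)$ rather than $(2)\Leftrightarrow(4)$ (the latter is immediate from algebraizability); neither point affects correctness.
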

\begin{proof}
(1)$\Rightarrow$(2) Let $\Phi=t[P(\varphi_1),\ldots, P(\varphi_k)]$ and $\Psi=r[P(\psi_1),\ldots, P(\psi_m)]$ be modal formulas in $\pfm$ and
assume that $\Phi\vdash_{FP}\Psi$. Moreover, let $e$ be a $[0,1]_{MV}$-model of $\chi_{\mathcal{E}}$ and $\Phi^\bullet$. Since $e(\chi_{\mathcal{E}})=1$, and by definition of the translation map $^\bullet:\pfm \to {\bf Fm}$,  it follows that  
$$
\langle e(p_{\varphi_1}),\ldots, e(p_{\varphi_k}), e(p_{\psi_1}),\ldots, e(p_{\psi_m})\rangle\in\oneset(\chi_{\mathcal{E}})= \mathscr{C}_{\mathcal{E}}.
$$
In other words, the assignment $\beta:\tau\mapsto e(p_{\tau})$ for $\tau\in \{\varphi_i, \psi_j\mid i=1,\ldots, k, j=1,\ldots,m\}$ is coherent. Moreover, the same $[0,1]_{MV}$-valuation $e$ is a model of $\Phi^\bullet$ and hence, 
$$
t^{[0,1]_{MV}}[e(p_{\varphi_1}),\ldots, e(p_{\varphi_k})]=t^{[0,1]_{MV}}[\beta(\varphi_1),\ldots, \beta(\varphi_k)]=1.
$$ 
By hypothesis, $\Phi\vdash_{FP}\Psi$ and $\beta$ is a model of $\Phi$. Therefore, by Corollary \ref{ComplCoherBooks}, $\beta$ is a model of $\Psi$ as well. That is to say, 
$$
r^{[0,1]_{MV}}[\beta(\psi_1),\ldots, \beta(\psi_m)]=r^{[0,1]_{MV}}[e(p_{\psi_1}),\ldots, e(p_{\psi_m})]=1
$$
showing that $e$ is a model of $\Psi^\bullet$ as required.
\vspace{.1cm}

(2)$\Rightarrow$(1) The argument is similar to the previous one. Indeed, every $[0,1]_{MV}$-model $e$ of $\chi_\mathcal{E}$ is any coherent book $\beta$ satisfying $\Phi$. Thus, by hypothesis $e$ models $\Psi^\bullet$ and thus $\beta\models \Psi$ as well, and the claim follows from Corollary \ref{ComplCoherBooks}.  
\vspace{.1cm}

(2)$\Leftrightarrow$(3) directly follows from Proposition \ref{PropPolyForm} (3) while (2)$\Leftrightarrow$(4) is an immediate consequence of the fact that MV-algebras are the equivalent algebraic semantics of \luk\ logic as we recalled in Subsection \ref{sec21}.
\end{proof}
Observe that in the statement of the above result we  assumed the  formulas $\Phi$ and $\Psi$ to be on the same set of events, without loss of generality due to Proposition \ref{propNumVariables}. 

The previous theorem should have clarified the reason why we spoke, at the beginning of this section, of \emph{local} reduction. Indeed, as we will further develop in the next section, the provability of a modal formula $\Phi$ in $\FPL$ is encoded by the deducibility in \luk\ logic of the translated formula $\Phi^\bullet$ from another propositional formula that indeed depends on $\Phi$ itself. In this precise sense the encoding of probabilistic to propositional entailment is local.

We end this section with some consequences of the previous theorem. \begin{corollary}
The deducibility relation of $\FPL$ is decidable.
\end{corollary}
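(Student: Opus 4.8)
The plan is to read decidability directly off the equivalence $(1)\Leftrightarrow(3)$ of Theorem \ref{thm:red1}. Given an instance of the deducibility relation $\Phi\vdash_{FP}\Psi$, I would first apply Proposition \ref{propNumVariables} to rewrite $\Phi,\Psi$ over a common set of events $\mathcal{E}=\{\varphi_1,\ldots,\varphi_k\}$; the theorem then asserts that $\Phi\vdash_{FP}\Psi$ holds if and only if $\mathscr{C}_{\mathcal{E}}\cap\mo(\Phi^\bullet)\subseteq\mo(\Psi^\bullet)$. Since the logical content has already been discharged by Theorem \ref{thm:red1}, it suffices to show that this single geometric containment can be tested by a terminating procedure; the rest of the argument is purely a matter of effectivity.

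The first step is to verify that the three sets involved are \emph{effectively computable} rational polyhedra of $[0,1]^k$. The translations $\Phi^\bullet,\Psi^\bullet$ are produced syntactically by clauses (T1)--(T3) and are \luk\ formulas over the fresh variables $p_{\varphi_1},\ldots,p_{\varphi_k}$; hence $\mo(\Phi^\bullet)=\oneset(f_{\Phi^\bullet})$ and $\mo(\Psi^\bullet)=\oneset(f_{\Psi^\bullet})$ are rational polyhedra by Proposition \ref{PropPolyForm}(2), and both are computable from the formulas, since the associated McNaughton functions are computable and their $1$-regions can be extracted explicitly. The coherent set $\mathscr{C}_{\mathcal{E}}$ equals $\oneset(f_{\chi_{\mathcal{E}}})$, where $\chi_{\mathcal{E}}$ is the formula singled out by the algorithm of Remark \ref{rem:Algo}; equivalently, $\mathscr{C}_{\mathcal{E}}={\rm co}\{\langle f_{\varphi_1}(\bv_j),\ldots,f_{\varphi_k}(\bv_j)\rangle\mid j=1,\ldots,t\}$ over the vertices $\bv_j$ of any rational complex linearizing the $f_{\varphi_i}$'s, which is again computable. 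Consequently $\mathscr{C}_{\mathcal{E}}\cap\mo(\Phi^\bullet)$ is the intersection of two rational polyhedra, hence a rational polyhedron presentable explicitly.

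Finally I would invoke the classical fact that containment between effectively presented rational polyhedra is decidable: writing both $\mathscr{C}_{\mathcal{E}}\cap\mo(\Phi^\bullet)$ and $\mo(\Psi^\bullet)$ as finite unions of rational polytopes, equivalently as quantifier-free formulas built from rational linear (in)equalities, the statement $\forall x\,(x\in\mathscr{C}_{\mathcal{E}}\cap\mo(\Phi^\bullet)\Rightarrow x\in\mo(\Psi^\bullet))$ reduces to the satisfiability of finitely many systems of rational linear inequalities, decidable e.g.\ by linear programming (or, abstractly, by quantifier elimination for the ordered field of reals). Assembling these steps yields the decision procedure. Because everything rests on Theorem \ref{thm:red1}, the argument is almost entirely bookkeeping about effectivity, and the one point deserving care is the computability of $\chi_{\mathcal{E}}$, hence of $\mathscr{C}_{\mathcal{E}}$, from the event formulas; this is precisely what the algorithm cited in Remark \ref{rem:Algo} supplies. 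One could instead route the proof through condition $(2)$, using that finite-premise deducibility in \luk\ logic is decidable and collapsing the premises $\chi_{\mathcal{E}},\Phi^\bullet$ into their meet $\chi_{\mathcal{E}}\wedge\Phi^\bullet$ (finitely generated filters being principal); by Proposition \ref{PropPolyForm}(3) this again amounts to the same polyhedral containment, so the two routes coincide.
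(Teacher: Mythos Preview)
Your argument is correct. The paper's own proof, however, routes through condition~(2) of Theorem~\ref{thm:red1} rather than~(3): it observes that $\Phi^\bullet$, $\Psi^\bullet$, and $\chi_{\mathcal{E}}$ are all algorithmically computable (the last via Remark~\ref{rem:Algo}), and then simply appeals to the known decidability of the finite-premise deducibility relation of \luk\ logic \cite{W73}. You mention this alternative in your final sentence, and indeed it is exactly the paper's approach.

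The difference is one of packaging. Your primary route unpacks the decidability question into an explicit polyhedral containment test and argues its effectivity from first principles (linear programming / quantifier elimination), whereas the paper outsources that work to the cited result on \luk\ deducibility. Your version is more self-contained and makes the computational content visible; the paper's is shorter and relies on a standard reference. As you note via Proposition~\ref{PropPolyForm}(3), the two routes collapse to the same geometric check, so there is no substantive divergence.
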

\begin{proof}
By Theorem \ref{thm:red1}, each deduction $\Phi\vdash_{FP}\Psi$ holds in  $\FPL$  iff the corresponding translated deduction $\chi_{\mathcal{E}}, \Phi^\bullet\vdash \Psi^\bullet$ holds in \luk\ logic. $\Phi^\bullet$ and $\Psi^\bullet$ are obtained algorithmically from $\Phi$ and $\Psi$; moreover, $\chi_{\mathcal{E}}$ is computed as in Remark   \ref{rem:Algo}.  
The claim then follows from the fact that the deducibility relation of \luk\ logic is decidable \cite{W73}.
\end{proof}

\begin{corollary}
$\FPL$ has a local deduction theorem: for all formulas $\Phi,\Psi\in \pfm$, $\Phi\vdash_{FP}\Psi$ iff there exists $n \in \mathbb{N}$ such that $\vdash_{FP}\Phi^{n} \to\Psi$.
\end{corollary}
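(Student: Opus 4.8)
The plan is to transfer the local deduction theorem of \luk\ logic (property (LDT) in Proposition~\ref{propBasicPropLuk}) across the translation map $^\bullet$, using Theorem~\ref{thm:red1} as the bridge between provability in $\FPL$ and deducibility in \luk\ logic. First I would reduce the probabilistic deduction $\Phi\vdash_{FP}\Psi$ to a \luk\ deduction: by the equivalence (1)$\Leftrightarrow$(2) of Theorem~\ref{thm:red1}, $\Phi\vdash_{FP}\Psi$ holds iff $\chi_{\mathcal{E}},\Phi^\bullet\vdash_{\lu}\Psi^\bullet$, where $\mathcal{E}$ is the set of events occurring in $\Phi$ and $\Psi$. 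Since \luk\ deductions from a finite set of premises can be compressed into a single premise via conjunction, this is equivalent to $\chi_{\mathcal{E}}\odot\Phi^\bullet\vdash_{\lu}\Psi^\bullet$ (or, treating the two premises together, $\chi_{\mathcal{E}}\wedge\Phi^\bullet\vdash_{\lu}\Psi^\bullet$).

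Next I would apply (LDT) to this single-premise \luk\ deduction: there exists a positive integer $n$ such that $\vdash_{\lu}(\chi_{\mathcal{E}}\odot\Phi^\bullet)^n\to\Psi^\bullet$. The goal is to massage this into something of the form $\vdash_{\lu}\chi_{\mathcal{E}}\odot(\Phi^\bullet)^n\to\Psi^\bullet$, i.e.\ to move all the powers onto $\Phi^\bullet$ alone and keep only one copy of $\chi_{\mathcal{E}}$. The key fact enabling this is that $\chi_{\mathcal{E}}$ is idempotent on its oneset in the relevant sense: because $\chi_{\mathcal{E}}$ has oneset exactly $\mathscr{C}_{\mathcal{E}}$ and the deduction only matters on points where $\chi_{\mathcal{E}}$ evaluates to $1$, distributing the power and collapsing the repeated $\chi_{\mathcal{E}}$ factors changes nothing on $\mathscr{C}_{\mathcal{E}}$. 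Concretely, on the oneset of $\chi_{\mathcal{E}}$ we have $\chi_{\mathcal{E}}=1$, so $(\chi_{\mathcal{E}}\odot\Phi^\bullet)^n$ and $\chi_{\mathcal{E}}\odot(\Phi^\bullet)^n$ agree there; hence $\chi_{\mathcal{E}}\odot(\Phi^\bullet)^n\vdash_{\lu}\Psi^\bullet$ by Proposition~\ref{PropPolyForm}(3), comparing onesets. This is exactly the hypothesis (2) of Theorem~\ref{thm:red1} applied to the modal formula $\Phi^{n}$, whose translation is $(\Phi^n)^\bullet=(\Phi^\bullet)^n$ by the inductive clauses (T2)--(T3) of the translation and the definition of $\Phi^n$. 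Reading Theorem~\ref{thm:red1} in the direction (2)$\Rightarrow$(1) with $\Phi^n$ in place of the antecedent then yields $\vdash_{FP}\Phi^n\to\Psi$, after noting that a deduction with an empty event set reduces to theoremhood of the implication.

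For the converse, given $\vdash_{FP}\Phi^n\to\Psi$ for some $n$, I would use the fact (provable in $\FPL$ since all \luk\ axioms hold for modal formulas, Proposition~\ref{propBasicPropLuk} and the (M\L) clause) that $\Phi\vdash_{FP}\Phi^n$, so that chaining with $\Phi^n\to\Psi$ via modus ponens at the outer layer gives $\Phi\vdash_{FP}\Psi$; alternatively one reads this off directly from the completeness result Theorem~\ref{thm:compl2}.

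The main obstacle I anticipate is the bookkeeping in the forward direction when collapsing the side formula $\chi_{\mathcal{E}}$: one must be careful that the index set of events for $\Phi^n$ is still $\mathcal{E}$ (which holds, since raising $\Phi$ to a power introduces no new atomic modal subformulas), and that the equivalence $(\Phi^n)^\bullet=(\Phi^\bullet)^n$ is genuinely an identity of \luk\ formulas rather than merely a logical equivalence. The geometric/oneset argument that $(\chi_{\mathcal{E}}\odot\Phi^\bullet)^n$ and $\chi_{\mathcal{E}}\odot(\Phi^\bullet)^n$ have the same restriction to $\mathscr{C}_{\mathcal{E}}$ is routine once phrased via Proposition~\ref{PropPolyForm}(3), but it is the step where the locality of the reduction (the presence of $\chi_{\mathcal{E}}$ as a side hypothesis) must be handled with care rather than treated as a standard single-formula local deduction theorem.
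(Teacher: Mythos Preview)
Your overall strategy---transfer the \luk\ local deduction theorem across the translation using Theorem~\ref{thm:red1}---is correct and is exactly what the paper does. However, the paper's execution is cleaner and your forward direction contains a genuine gap at the last step.

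The paper never merges $\chi_{\mathcal E}$ with $\Phi^\bullet$ into a single premise. Instead it applies the \emph{contextual} form of the \luk\ local deduction theorem (with $\chi_{\mathcal E}$ kept as a side hypothesis): from $\chi_{\mathcal E},\Phi^\bullet\vdash_{\lu}\Psi^\bullet$ one directly obtains $\chi_{\mathcal E}\vdash_{\lu}(\Phi^\bullet)^n\to\Psi^\bullet$ for some $n$. Since $(\Phi^\bullet)^n\to\Psi^\bullet=(\Phi^n\to\Psi)^\bullet$, this is condition~(2) of Theorem~\ref{thm:red1} for the pair $(\top,\Phi^n\to\Psi)$, whence $\vdash_{FP}\Phi^n\to\Psi$. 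No geometric detour about collapsing powers of $\chi_{\mathcal E}$ is needed.

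Your route merges the premises, applies the single-premise (LDT), and then reaches $\chi_{\mathcal E},(\Phi^\bullet)^n\vdash_{\lu}\Psi^\bullet$. At this point you write that Theorem~\ref{thm:red1} ``with $\Phi^n$ in place of the antecedent then yields $\vdash_{FP}\Phi^n\to\Psi$''. It does not: condition~(2) with antecedent $\Phi^n$ is precisely $\chi_{\mathcal E},(\Phi^n)^\bullet\vdash_{\lu}\Psi^\bullet$, and (2)$\Rightarrow$(1) gives only $\Phi^n\vdash_{FP}\Psi$, not $\vdash_{FP}\Phi^n\to\Psi$. The parenthetical about ``a deduction with an empty event set'' does not repair this; the antecedent $\Phi^n$ is very much present. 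To close the gap you would have to invoke (LDT) once more at the \luk\ level, obtaining $\chi_{\mathcal E}\vdash_{\lu}((\Phi^\bullet)^n)^m\to\Psi^\bullet$ for some $m$, and then conclude $\vdash_{FP}\Phi^{nm}\to\Psi$. That works, but it is exactly the contextual application of (LDT) that the paper performs in a single step, so your merge--then--separate manoeuvre (and the accompanying oneset argument) buys nothing.
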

\begin{proof}
By Theorem \ref{thm:red1}, $\Phi\vdash_{FP}\Psi$ iff, given $\cc E$ the set of events in $\Phi$ and $\Psi$, it holds $\chi_{\mathcal{E}},\Phi^\bullet\vdash_{\lu}\Psi^\bullet$, iff $\chi_{\mathcal{E}} \vdash_{\lu} (\Phi^\bullet)^{n} \to\Psi^\bullet$ for some $n \in \mathbb{N}$, since \luk\ logic has a local deduction theorem (\cite[Theorem 1.7]{MuAdvanced}). Since $(\Phi^\bullet)^{n} \to\Psi^\bullet = (\Phi^{n} \to \Psi)^{\bullet}$, this happens iff $\chi_{\mathcal{E}} \vdash_{\lu} (\Phi^{n} \to \Psi)^{\bullet}$, and applying Theorem \ref{thm:red1} again, this is equivalent to $\vdash_{FP}\Phi^{n} \to\Psi$ and the proof is completed.
\end{proof}

\section{A local algebraic semantics for \FPL}\label{sec5}
In the sense of Theorem \ref{thm:red1}, MV-algebras constitute a semantics for $\FPL$. Indeed, checking validity of theorems and deductions of $\FPL$ corresponds to checking the validity of quasiequations in the variety of all MV-algebras. In this section we will show that we can actually restrict to a special class of {\em projective} MV-algebras.

\subsection{Coherent MV-algebras and projectivity}\label{sec51}

The class of MV-algebras, called {\em coherent MV-algebras}, that we define later in this section is meant to capture coherent books on events via a suitable quotient of a free MV-algebra. More precisely, if $\mathcal{E}$ is a set of events, say $\{\varphi_1,\ldots, \varphi_k\}$, and $\mathscr{C}_\mathcal{E}$ denotes as usual the set of all coherent assignments on $\mathcal{E}$, then $\free(\mathcal{E})/\mathscr{C}_\mathcal{E}$ is a prototypical example of a coherent MV-algebra. 
Thus, by Theorem \ref{thmCoheGeo}, for every set of events  $\mathcal{E}$, $\free(\mathcal{E})/\mathscr{C}_\mathcal{E}$ is finitely presented.

As the following result shows, $\free(\mathcal{E})/\mathscr{C}_\mathcal{E}$ encodes the probabilistically coherent books on events $\varphi_1,\ldots,\varphi_k$.
\begin{proposition}\label{propoHomoCohe}
For every finite set of events $\mathcal{E}=\{\varphi_1,\ldots,\varphi_k\}$, there exists a one-one correspondence between homomorphisms of  $\free(\mathcal{E})/\mathscr{C}_\mathcal{E}$ to $[0,1]_{MV}$ and coherent books on $\mathcal{E}$.
\end{proposition}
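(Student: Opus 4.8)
The plan is to establish the correspondence by combining the universal property of the free algebra $\free(\mathcal{E})$ with the homomorphism theorem, and then to identify precisely which homomorphisms out of $\free(\mathcal{E})$ factor through the quotient by $\mathscr{C}_\mathcal{E}$. First I would recall that, by Theorem~\ref{Them:ISO1} and the functional representation, a homomorphism $h:\free(\mathcal{E})\to[0,1]_{MV}$ is entirely determined by its values $h(\varphi_1),\ldots,h(\varphi_k)$ on the generators, and conversely, by the universal property (Remark~\ref{remFree}), any choice of values in $[0,1]$ for the generators extends uniquely to such a homomorphism. Thus homomorphisms $\free(\mathcal{E})\to[0,1]_{MV}$ are in bijection with points $\langle h(\varphi_1),\ldots,h(\varphi_k)\rangle\in[0,1]^k$, i.e.\ with arbitrary books $\beta:\mathcal{E}\to[0,1]$.

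Next I would use the standard correspondence between homomorphisms out of a quotient and homomorphisms out of the original algebra that annihilate the congruence. Concretely, a homomorphism $h:\free(\mathcal{E})\to[0,1]_{MV}$ factors through the quotient map $\free(\mathcal{E})\to\free(\mathcal{E})/\mathscr{C}_\mathcal{E}$ if and only if $h$ is compatible with the congruence $\theta$ associated to $\mathscr{C}_\mathcal{E}$; equivalently, if and only if $h(\chi_\mathcal{E})=1$, where $\chi_\mathcal{E}$ is the formula of Remark~\ref{rem:Algo} whose oneset is $\mathscr{C}_\mathcal{E}$. Since $\free(\mathcal{E})/\mathscr{C}_\mathcal{E}=\free(k)/\mathscr{C}_\mathcal{E}$ is $\mathcal{M}(\mathscr{C}_\mathcal{E})$ by the functional representation, this amounts to saying that the restriction of the associated McNaughton functions to $\mathscr{C}_\mathcal{E}$ determines the homomorphism, so homomorphisms $\free(\mathcal{E})/\mathscr{C}_\mathcal{E}\to[0,1]_{MV}$ correspond bijectively to homomorphisms $h$ of $\free(\mathcal{E})$ whose associated point $\langle h(\varphi_1),\ldots,h(\varphi_k)\rangle$ lies in $\mathscr{C}_\mathcal{E}$.

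The final step is to invoke Theorem~\ref{thmCoheGeo}(2), which states that a book $\beta:\mathcal{E}\to[0,1]$ is coherent precisely when $\beta\in\mathscr{C}_\mathcal{E}$. Chaining the bijections: homomorphisms $\free(\mathcal{E})/\mathscr{C}_\mathcal{E}\to[0,1]_{MV}$ correspond to homomorphisms $h:\free(\mathcal{E})\to[0,1]_{MV}$ with $\langle h(\varphi_1),\ldots,h(\varphi_k)\rangle\in\mathscr{C}_\mathcal{E}$, which correspond to books $\beta$ with $\beta\in\mathscr{C}_\mathcal{E}$, which are exactly the coherent books on $\mathcal{E}$. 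This yields the claimed one-one correspondence.

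I expect the main subtlety to lie in the middle step, namely verifying cleanly that a homomorphism factors through the quotient exactly when its associated point lands in $\mathscr{C}_\mathcal{E}$. One must check that respecting the single generator $\chi_\mathcal{E}$ (equivalently, the principal congruence it generates, which exists since finitely generated congruences are principal) is genuinely equivalent to landing in the oneset $\mathscr{C}_\mathcal{E}=\oneset(f_{\chi_\mathcal{E}})$; this uses that $h$ is a homomorphism into a chain together with Proposition~\ref{PropPolyForm}. The first and third steps are essentially bookkeeping via the universal property and Theorem~\ref{thmCoheGeo}, so the factorization criterion is where the real content sits.
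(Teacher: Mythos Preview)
Your proposal is correct and follows essentially the same approach as the paper: both establish the chain of bijections homomorphisms of $\free(\mathcal{E})/\mathscr{C}_\mathcal{E}$ into $[0,1]_{MV}$ $\leftrightarrow$ points of $\mathscr{C}_\mathcal{E}$ $\leftrightarrow$ coherent books on $\mathcal{E}$, with the last step via Theorem~\ref{thmCoheGeo}. The only difference is that the paper compresses your first two steps into a single citation (\cite[Corollary 6.4]{MuAdvanced}, stating that homomorphisms $\free(k)/\mathscr{P}\to[0,1]_{MV}$ correspond bijectively to points of $\mathscr{P}$), whereas you unpack this from the universal property and the factorization criterion for quotients.
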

\begin{proof}
The proof is based on the general fact that, for every finitely presented MV-algebra ${\bf A}\cong\free(k)/\mathscr{P}$, the set of homomorphisms of ${\bf A}$ to $[0,1]_{MV}$ is in one-one relation with the points of $\mathscr{P}$, see \cite[Corollary 6.4]{MuAdvanced}. Let us call $\lambda$ the bijection between homomorphisms of $\free(\mathcal{E})/\mathscr{C}_\mathcal{E}$ to $[0,1]_{MV}$ and points of $\mathscr{C}_\mathcal{E}$. Thus, for every homomorphism $h:\free(\mathcal{E})/\mathscr{C}_\mathcal{E}\to[0,1]_{MV}$, let $\lambda(h)={\bf x}_h=\langle x_1,\ldots, x_k\rangle$. By the very definition of $\mathscr{C}_\mathcal{E}$ and Theorem \ref{thmCoheGeo}, the book $\beta:\varphi_i\mapsto x_i$ is coherent and this map associating points of $\mathscr{C}_\mathcal{E}$ to coherent books on $\mathcal{E}$ is clearly a bijection.  
\end{proof}

We previously observed that for every set $\mathcal{E}$ of events, $\free(\mathcal{E})/\mathscr{C}_\mathcal{E}$ is finitely presented. Our next result  shows that every such algebra is actually {\em projective} in the variety $\mathsf{MV}$ of MV-algebras. 

Before proving it, recall that an algebra ${\bf A}$ is {\em projective} in a class $\mathsf{K}$ of algebras in the same signature if for any $\alg B, \alg C \in \mathsf{K}$ and homomorphisms $f: \alg A \to \alg B$ and $g: \alg C \to \alg B$, with $g$ surjective, there exists an homomorphism $h: \alg A \to \alg C$ such that $g \circ h = f$.
If $\alg A$ is a finitely generated algebra and $\vv V$ is a variety, equivalently, $\alg A$ is projective iff it is a {\em retract} of a finitely generated free algebra. That is to say, if and only if 
for a free MV-algebra $\free(n)$, there are homomorphisms $i: {\bf A}\to \free(n)$ and $j:\free(n)\to{\bf A}$ 
such that $j\circ i$ is the identity homomorphism $id_{\bf A}$ of ${\bf A}$. Clearly, $i$ is an embedding, while $j$ is surjective. 

The following result from \cite{CabMu1} characterizes and shows properties of projective MV-algebras. For that, recall the following:
\vspace{.1cm}

(a) A map $\eta:[0,1]^k\to[0,1]^k$ is called a {\em $\mathbb{Z}$-retraction} if $\eta\circ\eta=\eta$ and $\eta$ is continuous, piecewise (affine) linear, and each of its pieces has integer coefficients. As observed in \cite[Lemma 2.4]{MS12} a map $\eta$ as above is a $\mathbb{Z}$-retraction iff there exist McNaughton functions $f_1,\ldots, f_k:[0,1]^k\to[0,1]$ such that  $\eta=\langle f_1,\ldots, f_k\rangle$, that is to say, for every $x\in[0,1]^k$, $\eta(x)=\langle f_1(x),\ldots, f_k(x)\rangle$. If there exists a $\mathbb{Z}$-retraction $\eta$ of $[0,1]^k$ onto $\mathscr{P}$, we say that $\mathscr{P}$ is a {\em $\mathbb{Z}$-retract} of $[0,1]^k$.
\vspace{.1cm}

(b) A map $\tau$ between two rational polyhedra $\mathscr{P}$ and $\mathscr{Q}$ of $[0,1]^k$ is a {\em $\mathbb{Z}$-homeomorphism} if $\tau$ is a homeomorphisms and there exist McNaughton functions $f_1,\ldots, f_k$ such that $\tau=\langle f_1,\ldots, f_k\rangle$. For a later use, observe that $\mathbb{Z}$-homeomorphisms may exist between a convex polyhedron $\mathscr{P}$ and a non-convex polyhedron $\mathscr{Q}$. That is to say, convexity is not preserved by $\mathbb{Z}$-homeomorphisms.

\vspace{.1cm}

(c) A set $X\subseteq[0,1]^k$ is said to be {\em star-shaped} if there exists an element $p\in X$ (called a {\em pole} of $X$) such that, for every $y\in X$, the linear segment $[p, y]$ is contained in $X$, see \cite{Holmes}.

\begin{theorem}[{\cite[Theorems 1.2, 1.4]{CabMu1}}]\label{thm:CabMund}
(1) A finitely generated MV-algebra ${\bf A}$ is projective iff ${\bf A}$ is isomorphic to $\free(n)/\mathscr{P}$ for some $\mathbb{Z}$-retract $\mathscr{P}$ of $[0,1]^n$.

(2) If $\mathscr{P}\subseteq[0,1]^n$ is a star-shaped rational polyhedron  with a pole $p\in \{0,1\}^n$, then $\free(n)/\mathscr{P}$ is projective. 
\end{theorem}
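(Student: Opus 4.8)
The plan is to handle the two parts separately while exploiting throughout the duality recalled above, under which a finitely presented MV-algebra is $\free(n)/\mathscr{P}\cong\mathcal{M}(\mathscr{P})$ and, as in item (a), an MV-endomorphism of $\free(n)$ is dual to a $\mathbb{Z}$-self-map $\langle f_1,\dots,f_n\rangle$ of $[0,1]^n$ (with composition reversed).

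\textbf{Part (1).} Since ${\bf A}$ is finitely generated, by the characterization recalled just before the statement it is projective iff it is a retract of some $\free(n)$, i.e.\ there are homomorphisms $j:\free(n)\to{\bf A}$ (onto) and $i:{\bf A}\to\free(n)$ (an embedding) with $j\circ i=id_{\bf A}$. First I would form the idempotent endomorphism $\epsilon=i\circ j$ of $\free(n)$. Reading $\epsilon$ off the $n$ free generators yields McNaughton functions $f_1,\dots,f_n$, and by item (a) the dual self-map $\eta=\langle f_1,\dots,f_n\rangle$ inherits idempotence $\eta\circ\eta=\eta$ from $\epsilon$, so $\eta$ is a $\mathbb{Z}$-retraction and $\mathscr{P}:=\eta([0,1]^n)$ is a $\mathbb{Z}$-retract of $[0,1]^n$. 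It then remains to identify the image algebra: because $\epsilon$ is idempotent its image equals its fixed-point set $\{g\mid g\circ\eta=g\}$, which consists exactly of the McNaughton functions determined by their restriction to $\mathscr{P}$; restriction therefore gives an isomorphism of this image onto $\mathcal{M}(\mathscr{P})\cong\free(n)/\mathscr{P}$. Since $i$ is injective, ${\bf A}\cong i({\bf A})=\epsilon(\free(n))\cong\free(n)/\mathscr{P}$. The converse is the mirror image: a $\mathbb{Z}$-retraction factors as $\eta=\iota\circ\rho$ with $\rho:[0,1]^n\to\mathscr{P}$ the corestriction and $\iota:\mathscr{P}\hookrightarrow[0,1]^n$ the inclusion, so $\rho\circ\iota=id_{\mathscr{P}}$; dualizing these two $\mathbb{Z}$-maps exhibits $\free(n)/\mathscr{P}$ as a retract of $\free(n)$, hence projective.

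\textbf{Part (2).} By part (1) it suffices to produce a $\mathbb{Z}$-retraction of $[0,1]^n$ onto $\mathscr{P}$. Flipping coordinates by the $\mathbb{Z}$-homeomorphisms $x_i\mapsto 1-x_i$ of item (b) — which preserve both the $\mathbb{Z}$-structure and star-shapedness — I may assume the pole is $p=\langle 0,\dots,0\rangle$, so that $t\cdot y\in\mathscr{P}$ whenever $y\in\mathscr{P}$ and $t\in[0,1]$. The naive candidate is the radial retraction sending $x$ to the point where $[0,x]$ leaves $\mathscr{P}$, and the crucial warning — the reason the hypotheses are delicate — is that this map is in general \emph{not} a $\mathbb{Z}$-map: along a ray meeting a facet $\{\langle a,z\rangle=c\}$ it has the fractional-linear form $x\mapsto \frac{c}{\langle a,x\rangle}\,x$, which is not piecewise affine with integer coefficients. (Already for $n=1$ and $\mathscr{P}=[0,1/2]$ the radial map $x\mapsto\min(x,1/2)$ fails to be McNaughton, whereas the folding map $x\mapsto x\wedge\neg x$ succeeds.) The fix I propose is combinatorial rather than radial: take a regular (unimodular) triangulation $\Delta$ of $[0,1]^n$ in which $\mathscr{P}$ is a subcomplex and $p$ is a vertex — available by the standard triangulation theorems for rational polyhedra — and define $\eta$ to fix $\mathscr{P}$ pointwise while collapsing each simplex lying outside $\mathscr{P}$ onto $\mathscr{P}$ along the star of $p$, extending affinely on each simplex.

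The main obstacle is exactly the integer-coefficient (McNaughton) requirement on the glued map, together with idempotence; everything else is bookkeeping. This is where $p\in\{0,1\}^n$ is indispensable: being an integer point of the cube, the linear parts of the collapsing maps, which are computed relative to $p$, can be kept integral and made to agree on shared faces, producing a globally continuous $\mathbb{Z}$-map. I would then read off idempotence from the construction (points of $\mathscr{P}$ are fixed, and every other point lands in $\mathscr{P}$ after a single application) and check that the image is precisely $\mathscr{P}$. Concretely, I expect the cleanest route is to realise $\eta$ as a meet of the McNaughton functions defining $\mathscr{P}$ with suitable hat functions subordinate to $\Delta$ — generalising the one-dimensional $x\wedge\neg(x\oplus x)$ that retracts onto $[0,1/3]$ — so that the integer-coefficient verification reduces to the known fact that hat functions on a unimodular triangulation are McNaughton.
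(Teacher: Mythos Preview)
The paper does not prove this theorem: it is stated with the citation \cite[Theorems 1.2, 1.4]{CabMu1} and used as a black box, with no proof given in the text. There is therefore no ``paper's own proof'' to compare your attempt against.

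On the substance of your proposal: Part~(1) is correct and is exactly the standard argument --- projectivity for finitely generated algebras is retraction off a free algebra, idempotent endomorphisms of $\free(n)$ dualize to $\mathbb{Z}$-retractions of $[0,1]^n$, and the image of the idempotent is the algebra of McNaughton functions on the retract. This is how Cabrer and Mundici do it as well.

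Part~(2) correctly locates the real difficulty (the radial retraction onto a star-shaped body is not a $\mathbb{Z}$-map) and points in the right direction (unimodular triangulation with $\mathscr{P}$ as a subcomplex, then a simplicial collapse toward $p$), but the actual collapsing step is where your sketch becomes hand-wavy. ``Send the vertices outside $\mathscr{P}$ to $p$ and extend affinely'' does not in general produce a retraction onto $\mathscr{P}$: for a simplex with some vertices in $\mathscr{P}$ and some outside, the affine image is the convex hull of $p$ together with the $\mathscr{P}$-vertices, and star-shapedness at $p$ guarantees only that \emph{segments} from $p$ to points of $\mathscr{P}$ lie in $\mathscr{P}$, not arbitrary convex hulls. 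The Cabrer--Mundici proof handles this with a more careful inductive stellar-subdivision/collapse argument (pushing vertices one at a time along rays through $p$ to the nearest point of $\mathscr{P}$, re-triangulating at each stage so that integrality is preserved), rather than a single global affine-on-simplices map. Your closing idea of realizing $\eta$ via Schauder hats is closer to what actually works, but you would still need to supply the inductive bookkeeping that keeps the intermediate images rational polyhedra containing $\mathscr{P}$.
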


The next result provides a characterization of coherent sets through projective MV-algebras.

\begin{theorem}\label{propZRet}
Let 
$\mathscr{C}\subseteq[0,1]^{k}$ be convex. Then the following conditions are equivalent:
\begin{enumerate}
\item $\mathscr{C}$ is a coherent set, i.e.,  $\mathscr{C}=\mathscr{C}_\mathcal{E}$ for some set $\mathcal{E}=\{\varphi_1,\ldots, \varphi_k\}$ of events;
\item $\mathscr{C}$ is a $\mathbb{Z}$-retract of $[0,1]^{k}$;
\item $\free(k)/\mathscr{C}$ is  a projective MV-algebra.
\end{enumerate}
\end{theorem}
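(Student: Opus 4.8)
The plan is to prove the cycle of implications $(1)\Rightarrow(2)\Rightarrow(3)\Rightarrow(1)$, leveraging the geometric characterization of coherent sets from Theorem~\ref{thmCoheGeo} together with the projectivity criterion of Theorem~\ref{thm:CabMund}. The implication $(2)\Leftrightarrow(3)$ is essentially immediate from Theorem~\ref{thm:CabMund}(1), since $\mathscr{C}$ is convex (hence a rational polyhedron by Theorem~\ref{thmCoheGeo}(1)) and $\free(k)/\mathscr{C}$ is finitely generated; so the real content lies in connecting coherence to $\mathbb{Z}$-retracts.

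For $(1)\Rightarrow(2)$, suppose $\mathscr{C}=\mathscr{C}_\mathcal{E}$ for $\mathcal{E}=\{\varphi_1,\ldots,\varphi_k\}$ with the $\varphi_i$ in $n$ variables. The natural candidate for a $\mathbb{Z}$-retraction is built directly from the McNaughton functions defining $\mathscr{C}_\mathcal{E}$. Recall from~(\ref{eqCoherentSet}) that $\mathscr{C}_\mathcal{E}$ is the convex hull of the image of the map $F=\langle f_{\varphi_1},\ldots,f_{\varphi_k}\rangle\colon[0,1]^n\to[0,1]^k$. The idea I would pursue is to exhibit a $\mathbb{Z}$-map $\eta\colon[0,1]^k\to[0,1]^k$ whose image is exactly $\mathscr{C}$ and which fixes $\mathscr{C}$ pointwise. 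By Theorem~\ref{thm:CabMund}(2), it would suffice to show $\mathscr{C}$ is star-shaped with a pole in $\{0,1\}^k$: here the convexity of $\mathscr{C}$ makes \emph{every} point a pole, and by Theorem~\ref{thmCoheGeo}(1) $\mathscr{C}$ contains a Boolean point, i.e.\ a vertex of $[0,1]^k$, which lies in $\{0,1\}^k$. Thus convexity plus the existence of a Boolean point immediately yields that $\mathscr{C}$ is star-shaped with a pole in $\{0,1\}^k$, and Theorem~\ref{thm:CabMund}(2) gives that $\free(k)/\mathscr{C}$ is projective, which by Theorem~\ref{thm:CabMund}(1) means $\mathscr{C}$ is a $\mathbb{Z}$-retract. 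This elegant shortcut in fact lets me combine $(1)\Rightarrow(2)$ and $(1)\Rightarrow(3)$ into one step.

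For the remaining implication $(3)\Rightarrow(1)$ (equivalently $(2)\Rightarrow(1)$), I would start from a convex $\mathbb{Z}$-retract $\mathscr{C}$ of $[0,1]^k$ and must manufacture a set of events $\mathcal{E}$ with $\mathscr{C}=\mathscr{C}_\mathcal{E}$. Let $\eta=\langle f_1,\ldots,f_k\rangle\colon[0,1]^k\to[0,1]^k$ be the $\mathbb{Z}$-retraction onto $\mathscr{C}$, where each $f_i$ is a McNaughton function on $[0,1]^k$; by Theorem~\ref{Them:ISO1} pick events $\varphi_i$ with $f_{\varphi_i}=f_i$ and set $\mathcal{E}=\{\varphi_1,\ldots,\varphi_k\}$. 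The claim is then $\mathscr{C}_\mathcal{E}=\mathscr{C}$. One inclusion is routine: the image $\eta([0,1]^k)=\mathscr{C}$ is already convex, so $\mathscr{C}_\mathcal{E}=\overline{\rm co}\,(\eta([0,1]^k))=\overline{\rm co}\,\mathscr{C}=\mathscr{C}$, using that $\mathscr{C}$ is closed and convex. The key point making this work is that $\eta$ restricts to the identity on $\mathscr{C}$, so $\mathscr{C}\subseteq\eta([0,1]^k)$, while $\eta([0,1]^k)\subseteq\mathscr{C}$ since $\eta$ is a retraction onto $\mathscr{C}$; hence $\eta([0,1]^k)=\mathscr{C}$ exactly.

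I expect the main subtlety to be in $(3)\Rightarrow(1)$, namely verifying cleanly that the image of the $\mathbb{Z}$-retraction is already convex so that no genuine convex-hull enlargement occurs when passing to $\mathscr{C}_\mathcal{E}$. This is where the hypothesis that $\mathscr{C}$ is convex is used essentially: a $\mathbb{Z}$-retract need not be convex in general (indeed, as the excerpt notes in remark~(b), $\mathbb{Z}$-homeomorphisms do not preserve convexity), so the convexity assumption on $\mathscr{C}$ is exactly what guarantees $\overline{\rm co}\,(\eta([0,1]^k))=\mathscr{C}$ rather than something strictly larger. Once that identification is secured, recognizing the resulting McNaughton functions as bona fide events via Theorem~\ref{Them:ISO1} and matching definitions with~(\ref{eqCoherentSet}) is routine.
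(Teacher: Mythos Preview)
Your approach is essentially the same as the paper's: you use convexity plus the Boolean point (Theorem~\ref{thmCoheGeo}(1)) to invoke the star-shaped criterion of Theorem~\ref{thm:CabMund}(2) for $(1)\Rightarrow(3)$, and you read off events from the components of a $\mathbb{Z}$-retraction for $(2)\Rightarrow(1)$, exactly as the paper does.

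There is, however, a small gap in your treatment of $(3)\Rightarrow(2)$. You call it ``essentially immediate'' from Theorem~\ref{thm:CabMund}(1), but as stated in the paper that theorem only gives that $\free(k)/\mathscr{C}$ is \emph{isomorphic} to $\free(n)/\mathscr{P}$ for \emph{some} $\mathbb{Z}$-retract $\mathscr{P}$; it does not directly say that $\mathscr{C}$ itself is a $\mathbb{Z}$-retract of $[0,1]^k$. The paper closes this gap by observing that the isomorphism forces $\mathscr{C}$ and $\mathscr{P}$ to be $\mathbb{Z}$-homeomorphic (\cite[Corollary~3.10]{MuAdvanced}) and that $\mathbb{Z}$-homeomorphisms preserve $\mathbb{Z}$-retracts (\cite[Lemma~17.6]{MuAdvanced}). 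Since your cycle $(1)\Rightarrow(3)\Rightarrow(2)\Rightarrow(1)$ hinges on $(3)\Rightarrow(2)$, you should include this step explicitly. A related minor sloppiness: your parenthetical ``hence a rational polyhedron by Theorem~\ref{thmCoheGeo}(1)'' is circular in the $(2)\Leftrightarrow(3)$ context, since that theorem concerns coherent sets and you do not yet know $\mathscr{C}$ is coherent; rationality of $\mathscr{C}$ in case $(2)$ follows instead from it being the image of a $\mathbb{Z}$-map.
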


\begin{proof}
(1)$\Rightarrow$(3) Since $\mathscr{C}$ is convex, it is star-shaped and every point is a pole. Moreover,
if $\mathscr{C}$ is a coherent set, there exists a set of events  $\mathcal{E}=\{\varphi_1,\ldots, \varphi_k\}$ such that $\mathscr{C}=\mathscr{C}_\mathcal{E}$. By Theorem \ref{thmCoheGeo}, $\mathscr{C}$ contains a Boolean point which clearly is a pole. Therefore  $\free(k)/\mathscr{C}$ is projective from Theorem \ref{thm:CabMund} (2). 
\vspace{.1cm}

(3)$\Rightarrow$(2) Assume that $\free(k)/\mathscr{C}$ is projective. By Theorem \ref{thm:CabMund} (1), $\free(k)/\mathscr{C}$ is isomorphic to $\free(k)/\mathscr{P}$ for some rational polyhedron $\mathscr{P}$ which is a $\mathbb{Z}$-retract of $[0,1]^k$. By \cite[Corollary 3.10]{MuAdvanced}, it then follows that $\mathscr{C}$  and $\mathscr{P}$ are $\mathbb{Z}$-homeomorphic and  \cite[Lemma 17.6]{MuAdvanced} proves that $\mathbb{Z}$-homeomorphisms preserve $\mathbb{Z}$-retracts. Thus, $\mathscr{C}$ is a $\mathbb{Z}$-retract of $[0,1]^{k}$. 
\vspace{.1cm}

(2)$\Rightarrow$(1) Assume that $\mathscr{C}$ is a $\mathbb{Z}$-retract of $[0,1]^{k}$ 
by a $\mathbb{Z}$-retraction  $\eta:[0,1]^{k}\to\mathscr{C}$. Then, there are McNaughton functions $f_1,\ldots, f_k:[0,1]^k\to[0,1]$ such that, for all $x=\langle x_1,\ldots, x_k\rangle\in [0,1]^k$,
$
\eta(x)=\langle f_1(x),\ldots,f_k(x)  \rangle\in \mathscr{C}
$
and $\eta\circ\eta=\eta$. Indeed, 
\begin{equation}\label{eqZRet}
\mathscr{C}=\{\langle f_{1}(x),\ldots, f_{k}(x)\rangle\mid x\in [0,1]^k\}.
\end{equation}
Let $\mathcal{E}=\{\varphi_1,\ldots,\varphi_k\}$ such that for all $i=1,\ldots, k$, $f_i=f_{\varphi_i}$. Then, by definition of coherent set and (\ref{eqZRet}), one has that 
$$
\mathscr{C}_\mathcal{E}=\overline{\rm co}\{\langle f_{\varphi_1}(x),\ldots, f_{\varphi_k}(x)\rangle\mid x\in [0,1]^k\}=\overline{\rm co}(\mathscr{C})=\mathscr{C},
$$
where the last equality holds because $\mathscr{C}$ is convex by hypothesis. 
\end{proof}

Now, let $k$ be any positive integer and let $\mathscr{C}$ be a coherent subset of $[0,1]^k$, that is to say, let $\mathcal{E}=\{\varphi_1,\ldots,\varphi_k\}$ be a set of events such that $\mathscr{C}=\mathscr{C}_\mathcal{E}$. Then, it is easy to see that the map $\lambda: \free(\mathcal{E})/\mathscr{C}_\mathcal{E}\to \free(k)/\mathscr{C}$ sending, for every $i=1,\ldots, k$, the generator $[\varphi_i]_{\mathscr{C}_\mathcal{E}}$ of $\free(\mathcal{E})/\mathscr{C}_\mathcal{E}$ to the generator $[x_i]_\mathscr{C}$ of $\free(k)/\mathscr{C}$ determines a isomorphism between the two MV-algebras. Thus, we define coherent MV-algebras as follows.

\begin{definition}\label{defCoheMV}
An MV-algebra is said to be {\em coherent} if it is isomorphic to  $\free(k)/\mathscr{C}$ where $\mathscr{C}$ is a coherent subset of $[0,1]^k$, for some $k \in \mathbb{N}$. We denote this class of algebras by $\mathsf{coMV}$.
\end{definition}
Notice that all finitely generated free MV-algebras are coherent by Theorem  \ref{propZRet}.
 Indeed, $\free(k)\cong\free(k)/[0,1]^k$ and $\free(k)$ is projective.
Moreover notice that $[0,1]^k$ is a coherent set and indeed $[0,1]^k=\mathscr{C}_{\mathcal{E}}$ for $\mathcal{E}$ being the set of propositional variables $x_1,\ldots, x_k$.

By definition, the class $\CMV$ is closed under isomorphisms. Thus, an algebra ${\bf A} \in \CMV$ might be of the form $\free(k)/\mathscr{C}$ where $\mathscr{C}$ is not necessarily convex. Indeed, a direct consequence of the duality put forward in \cite{MS13} is that two finitely presented MV-algebras are isomorphic if and only if their respective polyhedra are $\mathbb{Z}$-homeomorphic  and, as we recalled in the above point (b), convexity is not preserved under $\mathbb{Z}$-homeomorphism. However, isomorphisms preserves coherent assignments.
\begin{remark}\label{remHomoCohe}
By Definition \ref{defCoheMV}, for every ${\bf A}\in \CMV$ there exists a (not necessarily unique) set of events $\mathcal{E}$ such that ${\bf A}$ is isomorphic to $\free(\mathcal{E})/\mathscr{C}_{\mathcal{E}}$. Every homomorphism $h: {\bf A}\to [0,1]_{MV}$ determines a homomorphism $h':\free(\mathcal{E})/\mathscr{C}_\mathcal{E}\to[0,1]_{MV}$  composing $h$ and the  isomorphism $\lambda: \free(\mathcal{E})/\mathscr{C}_\mathcal{E}\to {\bf A}$. Therefore,  by Proposition \ref{propoHomoCohe} the book $\beta:\varphi_i\mapsto h(\lambda([\varphi_i)])$ is coherent. 
\end{remark}

Notice also that coherent MV-algebras are not closed under the universal algebraic operators of homomorphic images, subalgebras and direct products (indeed not even projective MV-algebras are), thus they neither are a variety nor a quasivariety. 

The following is a direct consequence of Definition \ref{defCoheMV}, Theorem \ref{propZRet} and the fact that projective algebras are closed under isomorphic images.

\begin{corollary}\label{corProjProp}
Every coherent MV-algebra is projective in the variety of MV-algebras, and thus also in the class $\CMV$. In particular, for every ${\bf A}\in \CMV$ there exists a free MV-algebra $\free(k)$ and homomorphisms $i: {\bf A}\to\free(k)$ and $j:\free(k)\to{\bf A}$ such that $j\circ i=id_{\bf A}$.
\end{corollary}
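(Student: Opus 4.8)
The plan is to assemble the statement directly from Definition~\ref{defCoheMV}, the equivalence (1)$\Leftrightarrow$(3) of Theorem~\ref{propZRet}, and the fact that the class of projective algebras is closed under isomorphic images. First I would fix ${\bf A}\in\CMV$ and invoke Definition~\ref{defCoheMV} to write ${\bf A}\cong\free(k)/\mathscr{C}$ for \emph{some} coherent subset $\mathscr{C}\subseteq[0,1]^k$ (note that it is essential to use the convex presentation guaranteed by the definition, since an isomorphic copy might also be presented over a non-convex polyhedron). Theorem~\ref{propZRet} then gives that $\free(k)/\mathscr{C}$ is projective in $\mathsf{MV}$, and since projectivity is preserved under isomorphic images, ${\bf A}$ itself is projective in the variety $\mathsf{MV}$.

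Next I would deduce projectivity in the subclass $\CMV$. This follows at once from the definition of projectivity recalled before Theorem~\ref{thm:CabMund}: being projective in $\mathsf{MV}$ means the lifting $g\circ h=f$ exists for \emph{every} choice of $\alg B,\alg C\in\mathsf{MV}$ and every surjection $g\colon\alg C\to\alg B$. Since $\CMV\subseteq\mathsf{MV}$, restricting the admissible codomains $\alg B,\alg C$ to lie in $\CMV$ only shrinks the family of diagrams that must be completed, so the lifting property is inherited and ${\bf A}$ is projective in $\CMV$ as well.

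For the concluding ``in particular'' clause I would appeal to the retract characterization of projectivity for finitely generated algebras over a variety, also recalled just before Theorem~\ref{thm:CabMund}: as ${\bf A}\cong\free(k)/\mathscr{C}$ is finitely generated and projective in $\mathsf{MV}$, it is a retract of the finitely generated free algebra $\free(k)$. Concretely, using the functional representation $\free(k)\cong\mathcal{M}(k)$ together with the $\mathbb{Z}$-retraction $\eta\colon[0,1]^k\to\mathscr{C}$ supplied by Theorem~\ref{propZRet}(2), one takes $j\colon\free(k)\to{\bf A}$ to be the restriction (quotient) map $g\mapsto g{\upharpoonright_\mathscr{C}}$ and $i\colon{\bf A}\to\free(k)$ to be precomposition with $\eta$, i.e.\ $h\mapsto h\circ\eta$. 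Because $\eta$ is idempotent with image $\mathscr{C}$, it fixes $\mathscr{C}$ pointwise, whence $j\circ i=id_{\bf A}$, and $i$ is an embedding while $j$ is surjective.

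Since every ingredient is already established in the excerpt, I do not anticipate a genuine obstacle; the two points deserving a moment of care are the direction of the quantifier restriction when descending from projectivity in $\mathsf{MV}$ to projectivity in $\CMV$ (one must verify that fewer diagrams, not more, need to be solved), and the bookkeeping required to exhibit the explicit section $i$ from the geometric retraction $\eta$ and to check $j\circ i=id_{\bf A}$.
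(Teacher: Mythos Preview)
Your proposal is correct and follows exactly the route the paper takes: the paper states the corollary as a direct consequence of Definition~\ref{defCoheMV}, Theorem~\ref{propZRet}, and closure of projectivity under isomorphic images, without spelling out any further details. Your expansion, including the explicit construction of the section $i$ via precomposition with the $\mathbb{Z}$-retraction $\eta$, goes beyond what the paper provides but is entirely sound and a helpful addition.
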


A direct inspection on  the proof of Theorem \ref{propZRet} shows a further property of coherent MV-algebras and coherent sets. Indeed, consider a set of events $\mathcal{E}=\{\varphi_1,\ldots, \varphi_k\}$ where the $\varphi_i$'s are written in, say, $n$ propositional variables. Then, the equivalence between (1) and (2) in Theorem \ref{propZRet} tells us that $\mathscr{C}_\mathcal{E}=\eta([0,1]^\mathcal{E})$ where $\eta$ is a $\mathbb{Z}$-retraction. Thus, there are McNaughton functions $f_1,\ldots, f_k:[0,1]^k\to[0,1]$ such that $\eta=\langle f_1,\ldots, f_k\rangle$. Call $\varphi_1',\ldots, \varphi_k'$ the formulas such that $f_i=f_{\varphi'_i}$ and let $\mathcal{E}'=\{\varphi_1',\ldots,\varphi_k'\}$. Notice that each $\varphi'_i$ is written in $k$ variables and $\mathscr{C}_\mathcal{E}=\mathscr{C}_{\mathcal{E}'}$. In other words, 
$$
\mathscr{C}_\mathcal{E}=\overline{\rm co}\{\langle f_{\varphi_1}(x),\ldots, f_{\varphi_k}(x)\rangle\mid x\in [0,1]^n\}=\{\langle f_{\varphi_1'}(\by),\ldots, f_{\varphi_k'}(\by)\rangle\mid \by\in [0,1]^k\}.
$$
Therefore, the following holds.
\begin{corollary}\label{corNtoK}
For every set of events $\mathcal{E}=\{\varphi_1,\ldots,\varphi_k\}$ in $n$ variables, there exists a set of events $\mathcal{E}'=\{\varphi'_1,\ldots,\varphi'_k\}$ in $k$ variables such that $\{\langle f_{\varphi_1'}(\by),\ldots, f_{\varphi_k'}(\by)\rangle\mid \by\in [0,1]^k\}$ is convex and it coincides with $\mathscr{C}_{\mathcal{E}}$. 
\end{corollary}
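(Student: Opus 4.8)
The plan is to read the statement off Theorem \ref{propZRet}, which already furnishes all the geometric content; what remains is to unwind the definition of a $\mathbb{Z}$-retraction and translate McNaughton functions back into formulas. The crucial observation is that the passage from $n$ variables to $k$ variables is exactly the passage from ``taking a convex hull of an image in $[0,1]^n$'' to ``realizing the set as a plain image from $[0,1]^k$''. A $\mathbb{Z}$-retraction $\eta\colon[0,1]^k\to[0,1]^k$ with image $\mathscr{C}_\mathcal{E}$ is precisely the device that accomplishes this, since its image is a direct image (no closed convex hull is needed) and is automatically convex because $\mathscr{C}_\mathcal{E}$ is a coherent set.

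Concretely, I would proceed as follows. First, since $\mathscr{C}_\mathcal{E}$ is by hypothesis a coherent set, the implication (1)$\Rightarrow$(2) of Theorem \ref{propZRet} applies and yields that $\mathscr{C}_\mathcal{E}$ is a $\mathbb{Z}$-retract of $[0,1]^k$; that is, there exists a $\mathbb{Z}$-retraction $\eta\colon[0,1]^k\to[0,1]^k$ with $\eta\circ\eta=\eta$ and $\eta([0,1]^k)=\mathscr{C}_\mathcal{E}$. Second, I invoke the characterization recalled in point (a) preceding that theorem (from \cite[Lemma 2.4]{MS12}): being a $\mathbb{Z}$-retraction, $\eta$ decomposes as $\eta=\langle f_1,\ldots,f_k\rangle$ for suitable McNaughton functions $f_i\colon[0,1]^k\to[0,1]$. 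Third, by McNaughton's Theorem \ref{Them:ISO1} each $f_i$ equals $f_{\varphi'_i}$ for some formula $\varphi'_i$, and since the domain of $\eta$ is the cube $[0,1]^k$, each such $\varphi'_i$ is a formula on $k$ propositional variables. Setting $\mathcal{E}'=\{\varphi'_1,\ldots,\varphi'_k\}$, the set in the statement is exactly the image of $\eta$:
$$
\{\langle f_{\varphi'_1}(\by),\ldots,f_{\varphi'_k}(\by)\rangle\mid \by\in[0,1]^k\}=\eta([0,1]^k)=\mathscr{C}_\mathcal{E},
$$
and convexity is inherited from $\mathscr{C}_\mathcal{E}$, which is convex by the definition of coherent set.

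I do not expect a genuine obstacle here: the corollary is essentially a restatement of the equivalence (1)$\Leftrightarrow$(2) of Theorem \ref{propZRet} made explicit at the level of formulas. The only point demanding care is bookkeeping on the number of variables. One must resist conflating the original arity $n$ of the $\varphi_i$ with the arity $k$ of the new $\varphi'_i$: the new events are forced to have arity $k$ because that is the dimension of the ambient cube on which the retraction $\eta$ acts, matching the number of events rather than the number of original propositional variables. This is the whole substance of the result, namely that replacing the events by the components of the $\mathbb{Z}$-retraction lets one dispense with the closed convex hull in~\eqref{eqCoherentSet} and obtain $\mathscr{C}_\mathcal{E}$ directly as a convex image from $[0,1]^k$.
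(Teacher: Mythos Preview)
Your proposal is correct and follows essentially the same route as the paper: apply (1)$\Rightarrow$(2) of Theorem~\ref{propZRet} to obtain a $\mathbb{Z}$-retraction $\eta$ onto $\mathscr{C}_\mathcal{E}$, decompose $\eta=\langle f_1,\ldots,f_k\rangle$ into McNaughton functions on $[0,1]^k$, and then pull back each $f_i$ to a formula $\varphi'_i$ in $k$ variables via McNaughton's theorem. Your remark on the arity bookkeeping (that the new events live in $k$ variables because $\eta$ acts on $[0,1]^k$) is exactly the point the paper emphasizes as well.
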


Now, we turn our attention to a further property that coherent MV-algebras enjoy and that will be used in the next section. To this end, recall that a class of algebras $\mathsf{K}$ in the same signature has the {\em joint embedding property} if for all ${\bf A}, {\bf B}\in \mathsf{K}$, there exists a ${\bf C}\in \mathsf{K}$ and embeddings $h_{\bf A}: {\bf A}\to {\bf C}$ and $h_{\bf B}:{\bf B}\to{\bf C}$. 
\begin{proposition}\label{JEP}
The class $\CMV$ of coherent MV-algebras has the joint embedding property.
\end{proposition}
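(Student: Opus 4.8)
The plan is to deduce the joint embedding property from the elementary fact that finitely generated free MV-algebras jointly embed, using that every coherent MV-algebra is a retract of a free one. First I would take ${\bf A},{\bf B}\in\CMV$ and apply Corollary \ref{corProjProp}: since both are projective, there are free MV-algebras $\free(n)$ and $\free(m)$ together with homomorphisms $i_{\bf A}\colon{\bf A}\to\free(n)$ and $i_{\bf B}\colon{\bf B}\to\free(m)$ (the embeddings arising from the respective retractions). As recalled right after the statement of Theorem \ref{thm:CabMund}, each such $i$ is an embedding, so ${\bf A}$ embeds into $\free(n)$ and ${\bf B}$ into $\free(m)$.

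Next I would funnel both free algebras into a single one. Writing $\free(n)$ on variables $x_1,\ldots,x_n$ and $\free(m)$ on variables $y_1,\ldots,y_m$, the universal property of free algebras (Remark \ref{remFree}) yields homomorphisms $\iota_1\colon\free(n)\to\free(n+m)$ and $\iota_2\colon\free(m)\to\free(n+m)$ sending the generators respectively to the first $n$ and the last $m$ generators of $\free(n+m)$. Both are embeddings: via the McNaughton representation of Theorem \ref{Them:ISO1}, $\iota_1$ realizes a McNaughton function $f\colon[0,1]^n\to[0,1]$ as the function on $[0,1]^{n+m}$ that ignores the extra coordinates, and distinct functions of the first $n$ coordinates remain distinct, so $\iota_1$ is injective; symmetrically for $\iota_2$. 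Composing, $\iota_1\circ i_{\bf A}\colon{\bf A}\to\free(n+m)$ and $\iota_2\circ i_{\bf B}\colon{\bf B}\to\free(n+m)$ are embeddings. Since every finitely generated free MV-algebra is coherent (as noted after Definition \ref{defCoheMV}, $\free(n+m)\cong\free(n+m)/[0,1]^{n+m}\in\CMV$), the algebra ${\bf C}=\free(n+m)$ witnesses the joint embedding property.

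The only step requiring care is the injectivity of $\iota_1$ and $\iota_2$, but this is a standard property of free algebras and is immediate from the functional representation, so I do not expect a genuine obstacle. As a robustness check, I note an alternative purely geometric route that avoids projectivity: if ${\bf A}\cong\free(\mathcal{E})/\mathscr{C}_\mathcal{E}$ and ${\bf B}\cong\free(\mathcal{F})/\mathscr{C}_\mathcal{F}$ with $\mathcal{E}$ and $\mathcal{F}$ on disjoint variable sets, then $\mathscr{C}_{\mathcal{E}\cup\mathcal{F}}=\mathscr{C}_\mathcal{E}\times\mathscr{C}_\mathcal{F}$ (the convex hull of a Cartesian product is the product of the convex hulls), which is again convex and hence coherent; pulling back along the two coordinate projections gives embeddings of $\free(\mathcal{E})/\mathscr{C}_\mathcal{E}$ and $\free(\mathcal{F})/\mathscr{C}_\mathcal{F}$ into $\free(\mathcal{E}\cup\mathcal{F})/\mathscr{C}_{\mathcal{E}\cup\mathcal{F}}$. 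The free-algebra argument above is shorter, so I would present that one as the main proof.
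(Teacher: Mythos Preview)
Your main argument is correct, but it takes a genuinely different route from the paper. You exploit Corollary \ref{corProjProp} to embed each coherent algebra into a finitely generated free algebra, then push both into a common $\free(n+m)$, which is itself coherent. This is an elegant shortcut: it really only uses that coherent algebras are projective and that free algebras sit inside larger free algebras, and it would establish JEP for any class of finitely generated projective algebras that contains the finitely generated free ones.

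The paper instead works directly with the coherent presentations. Writing ${\bf A}\cong\free(\mathcal{E}_1)/\mathscr{C}_{\mathcal{E}_1}$ and ${\bf B}\cong\free(\mathcal{E}_2)/\mathscr{C}_{\mathcal{E}_2}$, it sets $\mathcal{E}_3=\mathcal{E}_1\cup\mathcal{E}_2$ and uses Proposition \ref{prop:restrictC} to see that each $\mathscr{C}_{\mathcal{E}_i}$ is the coordinate projection of $\mathscr{C}_{\mathcal{E}_3}$; the duality of \cite{MS13} then turns these surjective $\mathbb{Z}$-projections into embeddings $\free(\mathcal{E}_i)/\mathscr{C}_{\mathcal{E}_i}\hookrightarrow\free(\mathcal{E}_3)/\mathscr{C}_{\mathcal{E}_3}$ that send generators to generators. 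Your ``robustness check'' is essentially a special case of this (with disjoint variable sets, so that $\mathscr{C}_{\mathcal{E}_3}$ is a product), though the paper does not need disjointness and appeals to Proposition \ref{prop:restrictC} rather than to the product formula for convex hulls. What the paper's approach buys is a joint embedding target that is itself presented as a quotient by a coherent set, with explicit control on where the generators go; what your approach buys is brevity and freedom from the polyhedral duality.
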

\begin{proof}
Let ${\bf A}$ and ${\bf B}$ two coherent MV-algebras that without loss of generality, we will think, respectively, as $\free(\mathcal{E}_1)/\mathscr{C}_{\mathcal{E}_1}$ and $\free(\mathcal{E}_2)/\mathscr{C}_{\mathcal{E}_2}$. Let us call $\mathcal{E}_3=\mathcal{E}_1\cup\mathcal{E}_2$. Then, $\mathscr{C}_{\mathcal{E}_3}$ is a rational polytope of $[0,1]^{\mathcal{E}_3}=[0,1]^{\mathcal{E}_1\cup \mathcal{E}_2}$. Consider the projection maps $\pi_1:[0,1]^{\mathcal{E}_3}\to[0,1]^{\mathcal{E}_1}$ and $\pi_2:[0,1]^{\mathcal{E}_3}\to[0,1]^{\mathcal{E}_2}$. By Proposition \ref{prop:restrictC}, one  has that $\mathscr{C}_{\mathcal{E}_1}=\pi_1[\mathscr{C}_{\mathcal{E}_3}]$ and $\mathscr{C}_{\mathcal{E}_2}=\pi_2[\mathscr{C}_{\mathcal{E}_3}]$. 

Notice that, for $i=1,2$, each projection $\pi_{i}$ is a $\mathbb{Z}$-map. Thus, its  associated dual map $h_i: \free(\mathcal{E}_i)/\mathscr{C}_{\mathcal{E}_i}\to \free(\mathcal{E}_3)/\mathscr{C}_{\mathcal{E}_3}$ is a homomorphism by \cite[Lemma 3.3]{MS13}. Furthermore, $h_i$ is injective by the duality theorem of \cite{MS13} (by direct inspection on how the functor named $\mathscr{M}$ in \cite[\S3.2]{MS13}  acts on $\mathbb{Z}$-maps).
Thus, each $h_i$ is an embedding of $\free(\mathcal{E}_i)/\mathscr{C}_{\mathcal{E}_i}$ into $\free(\mathcal{E}_3)/\mathscr{C}_{\mathcal{E}_3}$. Moreover, the two maps are such that, for all $\alpha\in\mathcal{E}_i$, 
$
h_i([\alpha]_{\mathscr{C}_{\mathcal{E}_i}})=[\alpha]_{\mathscr{C}_{\mathcal{E}_3}}.
$ 
\end{proof}

\subsection{Validity in $\CMV$ and provability in \FPL}\label{sec62}

We are now going to define the notion of semantical consequence that we mean to use for $\CMV$-algebras. The idea is that, by Theorem \ref{ComplCoherBooks}, theorems and deductions in $\FPL$ ground on coherent assignments. In turn, the latter corresponds to  homomorphisms of a coherent MV-algebra to $[0,1]_{MV}$ by Proposition \ref{propoHomoCohe} and Remark \ref{remHomoCohe}.

Now recall that given an MV-term $t(x_{1}\ldots x_{k})$, and $\alg A$ an MV-algebra, by 
$
\models_{\alg A} t(x_{1},\ldots, x_{k})
$ 
one usually means that  any assignment $h$ of the variables in $X = \{x_{1},\ldots, x_{k}\}$ to elements of $\alg A$: $h(x_{1}) = a_{1}\ldots h(x_{k}) = a_k$, (uniquely) extends to a homomorphism $h$ from the term algebra $\term(X)$ to $\alg A$ (see \cite[\S 1.1]{GJKO} for details) such that, 
$
t^{\alg A}(a_{1}, \ldots, a_{k}) = \top_{\alg A}.
$
Since MV-algebras form a variety, one can equivalently say that $\models_{\alg A} t(x_{1}\ldots x_{k})$ if for all homomorphisms $h: \free(k) \to \alg A$ (where $\free(k)$ is generated by $x_{1} \ldots x_{k}$), $h(x_{i}) = a_{i}$ for $a_{1} \ldots a_{n}$, $t^{\alg A}(a_{1}, \ldots a_{k}) = \top_{\alg A}.$

For the case of $\FPL$, however, formulas have two layers: an outer MV-term, and inner MV-terms (recall Subsection \ref{sec23}). In order to take care of this, we shall define a notion of \emph{coherent valuation}, based on a coherent MV-algebra that acts as a ``local'' version of the free algebra.

\begin{notation}\label{not:Events}
	From now on, in order to avoid any possible confusion, given a set of events $\cc E = \{\varphi_1, \ldots, \varphi_k\}$ we will see the free MV-algebra $\free(\cc E)$ as generated by the variables $p_{\varphi_1}, \ldots, p_{\varphi_k}$ (instead of  using the events $\varphi_1, \ldots, \varphi_k$ to denote the variables). We shall also write $\term(\cc E)$ to denote the term algebra in the \luk\ language over the variables $p_{\varphi_1}, \ldots, p_{\varphi_k}$. 
\end{notation}

Let now $\Phi=t[P(\varphi_1),\ldots, P(\varphi_k)]$ be a formula in $\FPL$, and call $\mathcal{E}=\{\varphi_1,\ldots,\varphi_k\}$. Let $\Phi^\bullet=t(p_{\varphi_1},\ldots, p_{\varphi_k})$ be the translation of $\Phi$ to \luk\ language as in Section \ref{sec4}, and let $\mu_{\cc E}$ be the natural epimorphism $\mu_\mathcal{E}: \term(\cc E) \to \free(\cc E)$ (where we use epimorphism in the universal algebraic sense, that is, to mean a surjective homomorphism). Consider the valuation  $v_{\mathcal{E}}$ of $\Phi^\bullet$ to $\free(\cc{E})/\scr C_{\cc E}$ to be the homomorphism from the term algebra $\term(\cc E)$ to $\free(\mathcal{E})/\mathscr{C}_\mathcal{E}$ that extends the map 
$$
v_{\mathcal{E}}(p_{\varphi_i}) = [\mu_{\cc E}(p_{\varphi_{i}})]_{\mathscr{C}_\mathcal{E}}, \; \mbox{ for } i = 1, \ldots, k.
$$

The above is hence a fixed interpretation for each formula $\Phi\in \pfm$, and it evaluates $\Phi$  in the  coherent algebra $\free(\mathcal{E})/\mathscr{C}_\mathcal{E}$. 
We can extend such an evaluation to coherent MV-algebras isomorphic to $\free(\mathcal{E})/\mathscr{C}_\mathcal{E}$ in the following way.
\begin{definition}
	Consider a set of events $\cc E$, and let $\alg A \in \CMV$ isomorphic to $\free(\mathcal{E})/\mathscr{C}_\mathcal{E}$ via a map $f: \free(\mathcal{E})/\mathscr{C}_\mathcal{E} \to \alg A$. The composition $f \circ v_{\cc E}: \term(\cc E) \to \alg A$ is said to be a {\em coherent valuation} of the formulas in $\pfm$ over the set of events $\cc E$. 
\end{definition}

In accordance to what we showed in Proposition \ref{propoHomoCohe}, we introduce the following notion of semantic derivability that uses homomorphisms of coherent MV-algebras in $[0,1]_{MV}$.
\begin{definition}
Let $\Phi$ and $\Psi$ from $\pfm$ be over the set of events $\mathcal{E}$. 
We write $\Phi\models_{\mathcal{E}}\Psi$ if for every coherent MV-algebra $\alg A$ isomorphic to $\free(\mathcal{E})/\mathscr{C}_\mathcal{E}$, every coherent valuation $e: \term(\cc E) \to \alg A$, and every
homomorphism $h: \alg A \to[0,1]_{MV}$,  it holds that $h(e(\Phi^\bullet))=1$ implies  $h(e(\Psi^\bullet))=1$.
\end{definition}
 We are now in a position to show that coherent MV-algebras can be regarded as algebraic models for the probability logic $\FPL$.
\begin{theorem}\label{thm:coMVcomp}
Let $\Phi,\Psi\in \pfm$ and let $\mathcal{E}$ be the set of events occurring in them. Then, the following  are equivalent:
\begin{enumerate}
\item $\Phi\vdash_{FP}\Psi$;
\item $\Phi\models_{\mathcal{E}}\Psi$;
\end{enumerate} 
\end{theorem}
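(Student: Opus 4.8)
The plan is to recognize that $\Phi\models_{\mathcal{E}}\Psi$ is, after unwinding the definition of coherent valuation, nothing but a restatement of condition (3) of Theorem \ref{thm:red1} (equivalently of Corollary \ref{ComplCoherBooks}), so that the equivalence with $\Phi\vdash_{FP}\Psi$ comes essentially for free. The only genuine work is to verify that the apparent extra generality in the definition of $\models_{\mathcal{E}}$---quantifying over all $\alg A\in\CMV$ isomorphic to $\free(\mathcal{E})/\mathscr{C}_\mathcal{E}$ and over all coherent valuations---collapses to a single canonical situation.

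First I would argue that we may take $\alg A=\free(\mathcal{E})/\mathscr{C}_\mathcal{E}$ and $e=v_{\mathcal{E}}$. Indeed, by definition a coherent valuation has the form $e=f\circ v_{\mathcal{E}}$ for an isomorphism $f:\free(\mathcal{E})/\mathscr{C}_\mathcal{E}\to\alg A$; hence for any homomorphism $h:\alg A\to[0,1]_{MV}$ the composite satisfies $h\circ e=(h\circ f)\circ v_{\mathcal{E}}$, and since $f$ is an isomorphism the assignment $h\mapsto h\circ f$ is a bijection between the homomorphisms $\alg A\to[0,1]_{MV}$ and those $\free(\mathcal{E})/\mathscr{C}_\mathcal{E}\to[0,1]_{MV}$ (this is the content of Remark \ref{remHomoCohe}). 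Therefore $\Phi\models_{\mathcal{E}}\Psi$ holds iff for every homomorphism $g:\free(\mathcal{E})/\mathscr{C}_\mathcal{E}\to[0,1]_{MV}$ one has $g(v_{\mathcal{E}}(\Phi^\bullet))=1$ implies $g(v_{\mathcal{E}}(\Psi^\bullet))=1$.

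Next I would compute these values explicitly. Writing $\Phi^\bullet=t(p_{\varphi_1},\ldots,p_{\varphi_k})$, the valuation sends $p_{\varphi_i}$ to the generator $[\varphi_i]_{\mathscr{C}_\mathcal{E}}$, so that $g(v_{\mathcal{E}}(\Phi^\bullet))=t^{[0,1]_{MV}}(g([\varphi_1]_{\mathscr{C}_\mathcal{E}}),\ldots,g([\varphi_k]_{\mathscr{C}_\mathcal{E}}))$. By Proposition \ref{propoHomoCohe}, each such $g$ corresponds bijectively to a coherent book $\beta\in\mathscr{C}_\mathcal{E}$ via $\beta(\varphi_i)=g([\varphi_i]_{\mathscr{C}_\mathcal{E}})$. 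Hence $g(v_{\mathcal{E}}(\Phi^\bullet))=1$ iff $\beta\models\Phi$, and likewise for $\Psi$. Consequently $\Phi\models_{\mathcal{E}}\Psi$ is equivalent to requiring that for all $\beta\in\mathscr{C}_\mathcal{E}$, $\beta\models\Phi$ implies $\beta\models\Psi$. By Corollary \ref{ComplCoherBooks} this last condition is precisely $\Phi\vdash_{FP}\Psi$, which closes the cycle and proves the theorem.

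The proof is largely definitional, so I do not expect a deep obstacle; the point requiring care is the reduction in the second and third steps. One must check that the bijections $h\mapsto h\circ f$ and $g\mapsto\beta$ interact correctly with the fixed interpretation $v_{\mathcal{E}}$, namely that evaluating the translated term $\Phi^\bullet$ along $v_{\mathcal{E}}$ and then pushing forward by a homomorphism into $[0,1]_{MV}$ really reproduces $t^{[0,1]_{MV}}[\beta(\varphi_1),\ldots,\beta(\varphi_k)]$. This is where the role of the term algebra $\term(\mathcal{E})$ and of the canonical epimorphism $\mu_{\mathcal{E}}$ in the definition of $v_{\mathcal{E}}$ has to be tracked with some attention, but no further difficulty arises.
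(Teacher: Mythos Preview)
Your proposal is correct and follows essentially the same approach as the paper's own proof: reduce the quantification over all coherent valuations to the canonical one $v_{\mathcal{E}}$ into $\free(\mathcal{E})/\mathscr{C}_\mathcal{E}$, invoke Proposition \ref{propoHomoCohe} to identify homomorphisms into $[0,1]_{MV}$ with coherent books, and conclude via Corollary \ref{ComplCoherBooks}. The paper states the first reduction as ``a straightforward consequence of the definition'' while you spell it out via the bijection $h\mapsto h\circ f$, but the argument is the same.
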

\begin{proof}
Let $\Phi,\Psi\in \pfm$ over events in $\mathcal{E}$. Notice that it is a straightforward consequence of the definition that $\Phi\models_{\mathcal{E}}\Psi$ if and only if for every homomorphism $h: \free(\mathcal{E})/\mathscr{C}_\mathcal{E} \to [0,1]_{MV}$, $h(v_{\cc E}(\Phi^\bullet)) = 1$ implies $h(v_{\cc E}(\Psi^\bullet)) = 1$.

Assume now that $\Phi\not\models_\mathcal{E} \Psi$. Hence, that is true if and only if there is a homomorphism $h: \free(\mathcal{E})/\mathscr{C}_\mathcal{E}\to[0,1]_{MV}$ such that $ h(v_{\cc E}(\Phi^\bullet))=1$  and  $h( v_{\cc E}(\Psi^\bullet))<1$. By Proposition \ref{propoHomoCohe}, homomorphism from $\free(\mathcal{E})/\mathscr{C}_\mathcal{E}$ to $[0,1]_{MV}$ are in one-one correspondence to coherent books on $\cc E$. In particular, the map $\beta_h:\varphi_i\in \mathcal{E}\mapsto h([\varphi_i])\in [0,1]$ is a coherent book on $\mathcal{E}$  (see also Remark \ref{remHomoCohe}). Then, $ h(v_{\cc E}(\Phi^\bullet))=1$  and  $h( v_{\cc E}(\Psi^\bullet))<1$ holds if and only if $\beta_h\models\Phi$, but $\beta_h\not\models \Psi$. The existence of a coherent book $\beta$ such that $\beta \models\Phi$, but $\beta \not\models \Psi$ is in turn equivalent to the fact that $\Phi\not\vdash_{FP}\Psi$ by Corollary \ref{ComplCoherBooks}. Thus the claim is settled.
\end{proof}

In light of the above proof, we notice that the algebras of the kind $\free(\cc E)/\scr{C}_{\cc E}$ are in some sense the \emph{standard models} of the deductions over the events in $\cc E$.

Let us end this section by  remarking that, although Theorem \ref{thm:coMVcomp} shows that the class of algebras $\CMV$ provides a semantics for $\FPL$, it is not its equivalent algebraic semantics neither in the sense of Lindenbaum-Tarski nor Blok-Pigozzi. Indeed, such a semantics would likely need to have two-sorted algebras as recently done in \cite{KM}.

\section{Probabilistic unification and its unification type}\label{sec6}
We now show how a probabilistic version of the unification problem for $\FPL$ has an algebraic equivalent in similar terms to the approach developed by Ghilardi  \cite{G97}. 

The usual way of formulating a symbolic unification problem for an algebraizable logic $\mathcal{L}$ is to consider a (finite) set of pairs of terms $\{(s_{i}, t_{i}) \mid i \in I\}$ over the language of $\mathcal{L}$, and to solve the unification problem means to find a substitution $\sigma$, called \emph{unifier}, of the variables occurring in the terms $s_{i}, t_{i}$ for $i \in I$ that makes the identities $\{\sigma(s_{i}) = \sigma(t_{i}) \mid i \in I\}$ valid in the equivalent algebraic semantics of $\mathcal{L}$. 
Unifiers can be ordered by generality in the following way: a substitution $\sigma_{1}$ is more general than a substitution $\sigma_{2}$ if there is another substitution $\tau$ such that $\tau \circ \sigma_{1} = \sigma_{2}$. This gives a preorder on the set of unifiers for a problem, thus we can consider the associated partial order (where the equivalence classes correspond to unifiers that are \emph{equally general}).
The \emph{unification type} of a problem is said to be: {\em unitary}, if the partial order of the unifiers has a maximum; {\em finitary}, if it does not have a maximum but it has finitely many maximal elements; {\em infinitary} if it instead has infinitely many maximal elements; {\em nullary} otherwise. The unification type of a logic $\mathcal{L}$ is the worst unification type occurring for a unification problem in $\mathcal{L}$.

Ghilardi shows that for an algebraizable logic with algebraic semantics $\vv V$, unification can also be studied by algebraic means. In particular, a unification problem corresponds to a finitely presented algebra $\alg A \in \vv V$, seen as generated by a finite set $X$ and presented by a finite set of identities $S$, so we write $\alg A = \alg F(X, S)$.
A solution (or unifier) is a homomorphism $u : {\bf A }\to {\bf P}$, where $\alg P$ is a projective algebra in $\vv V$. Algebraic unifiers can also be ordered by generality, by saying that a unifier $u_{1}: {\bf A }\to {\bf P}_{1}$ is more general than $u_{2}: {\bf A} \to {\bf P}_{2}$ if there is an homomorphism $p: {\bf P}_{1} \to {\bf P}_{2}$ such that $p \circ u_{1} = u_{2}$. This gives a preorder on algebraic unifiers, and thus considering the associated partial order, one can define a notion of algebraic unification type for the algebraic semantics of a logic. Ghilardi shows that symbolic and algebraic unification type coincide for algebraizable logics \cite{G97}.

 Marra and Spada applied  Ghilardi result in \cite{MS13} to the case of \luk\ logic and showed that the unification type of \luk\ logic is nullary. More precisely, they constructed a co-final chain of unifiers of order type $\omega$ for a specific MV-algebraic unification problem. Their proof uses the duality between finitely presented MV-algebras and  rational polyhedra, and indeed their argument is purely geometrical. 

 As we remarked at the end of Subsection \ref{sec62}, we are not in the presence of an equivalent algebraic semantics for $\FPL$ and hence Ghilardi's theorem does not apply to this case straightforwardly. However, an
analogous result for $\FPL$ can be proved. 

In particular, as we have seen, $\FPL$ is a logic that reasons about uncertainty measures of \luk\ events, and that essentially (in the sense of Theorem \ref{thm:red1}) treats atomic modal formulas as its variables. Therefore
we define a probabilistic version of a unification problem following this intution,
where MV-algebraic terms are unified considering probabilistic formulas as arguments.

\subsection{The symbolic and algebraic probabilistic unification problems}

We first introduce the main definitions for this section. Namely, the symbolic and the algebraic ways to regard a unification problem for the probability logic $\FPL$.

\begin{definition}\label{def:SPUI}
A {\em (symbolic) probabilistic unification problem} for $\FPL$ is a  set of $m$ identities 
\begin{equation}\label{eq:uniProb}
\mathcal{I}=\{t_i[P(\varphi_1),\ldots, P(\varphi_k)] = u_i[P(\varphi_1),\ldots, P(\varphi_k)]\mid i=1,\ldots, m\}.
\end{equation}
\end{definition}
Notice that the identities are assumed to be on the same set of events $\mathcal{E}=\{\varphi_1,\ldots, \varphi_k\}$, without loss of generality by Proposition \ref{propNumVariables}. 

Given a probabilistic unification problem $\mathcal{I}$ as above,   a {\em probabilistic unifier} for $\mathcal{I}$ is a probabilistic substitution as in Definition \ref{defProbSub} (and as motivated by Remark \ref{remark:probsub}), such that
  for all $i = 1, \ldots, m$:
\begin{equation}\label{eq:defprobunifier}
\vdash_{FP} t_{i}[\sigma(P(\varphi_{1})), \ldots, \sigma(P(\varphi_{k}))] \leftrightarrow u_{i}[ \sigma(P(\varphi_{1})), \ldots,\sigma(P(\varphi_{k}))]
\end{equation}
As in the propositional case,  also probabilistic unifiers can be ordered by generality,  in the following sense. Consider two probabilistic unifiers $\sigma, \tau$ for the above problem, say $\sigma(P(\varphi_{i})) =  r_{i} [P(\gamma_{1}),\ldots, P(\gamma_{t})]$ and $\tau(P(\varphi_{i})) =  r'_{i} [P(\gamma'_{1}),\ldots ,P(\gamma'_{l})]$ for $i = 1, \ldots, m$.  
Then we say that \emph{$\tau$ is more general that $\sigma$}, and write $\sigma \sqsubseteq \tau$, if there exists a probabilistic substitution  $\delta$ mapping each $P(\gamma'_{j})$ to a term $r''_{i} [P(\gamma_{1}),\ldots, P(\gamma_{t})]$ for $j = 1,\ldots, l$, such that $\sigma = \delta \circ \tau$ is provable in $\FPL$, that is to say
\begin{equation}\label{eq:composition}
\vdash_{FP} t[\sigma(P(\varphi_{1})), \ldots, \sigma(P(\varphi_{k}))] \leftrightarrow t[(\delta \circ \tau)(P(\varphi_{1})), \ldots,(\delta \circ \tau)(P(\varphi_{k}))].
\end{equation}
Notice that, in the expression above, the unifier $\sigma$ can be lifted out of the outer terms and hence it can be equivalently written as
$$\vdash_{FP} \sigma(t[\sigma(P(\varphi_{1})), \ldots, P(\varphi_{k})]) \leftrightarrow (\delta \circ \tau)(t[P(\varphi_{1}), \ldots,P(\varphi_{k})]).
$$
\begin{lemma}
Given a probabilistic unification problem $\cc I$, $\sqsubseteq$ is a preorder on its set of probabilistic unifiers.
\end{lemma}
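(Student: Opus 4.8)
The plan is to verify the two defining properties of a preorder—reflexivity and transitivity—for the relation $\sqsubseteq$ on the set of probabilistic unifiers of a fixed problem $\cc I$. Throughout, I keep the set of events $\mathcal{E}=\{\varphi_1,\ldots,\varphi_k\}$ fixed, and I rely on the fact (established in Proposition~\ref{propSubstitution} and the subsequent discussion) that $\FPL$ satisfies substitution of equivalents, so that provable equivalence $\vdash_{FP}\Phi\leftrightarrow\Psi$ is a congruence compatible with composing terms and with probabilistic substitutions.

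For reflexivity, I must show $\sigma\sqsubseteq\sigma$ for every probabilistic unifier $\sigma$. The natural witness is the identity substitution $\delta=\mathrm{id}$, which sends each $P(\gamma_j)$ to itself and is trivially a probabilistic substitution. Then $\delta\circ\sigma=\sigma$ on the nose, so $(\ref{eq:composition})$ reduces to $\vdash_{FP} t[\sigma(P(\varphi_1)),\ldots,\sigma(P(\varphi_k))]\leftrightarrow t[\sigma(P(\varphi_1)),\ldots,\sigma(P(\varphi_k))]$, which holds because $\leftrightarrow$ is reflexive in $\FPL$ (a consequence of the \luk\ axioms for modal formulas, rule group (M\L)).

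For transitivity, suppose $\sigma\sqsubseteq\tau$ and $\tau\sqsubseteq\rho$. By definition there are probabilistic substitutions $\delta_1,\delta_2$ with $\vdash_{FP}\sigma=\delta_1\circ\tau$ and $\vdash_{FP}\tau=\delta_2\circ\rho$ (understood as provable equivalences after plugging into the outer terms). I would take the composite $\delta=\delta_1\circ\delta_2$ as the witness for $\sigma\sqsubseteq\rho$. Two things need checking: first, that $\delta$ is again a probabilistic substitution, which follows since probabilistic substitutions are closed under composition—if $\vdash_{FP}\Phi\leftrightarrow\Psi$ then $\vdash_{FP}\delta_2(\Phi)\leftrightarrow\delta_2(\Psi)$ and hence $\vdash_{FP}\delta_1(\delta_2(\Phi))\leftrightarrow\delta_1(\delta_2(\Psi))$, using Definition~\ref{defProbSub} twice. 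Second, that $\vdash_{FP}\sigma=\delta\circ\rho$ after substituting into the outer terms; this I obtain by chaining the two hypotheses, replacing $\tau$ by $\delta_2\circ\rho$ inside the provable equivalence witnessing $\sigma\sqsubseteq\tau$, and invoking substitution of equivalents together with transitivity of $\leftrightarrow$ in $\FPL$.

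The main obstacle I anticipate is purely bookkeeping rather than conceptual: one must be careful that composition of probabilistic substitutions lands again in the class of probabilistic substitutions, and that the provable equivalences compose correctly when inserted into the outer MV-term $t$. The key lever for both points is that $\vdash_{FP}$ respects substitution of provably equivalent formulas (Proposition~\ref{propSubstitution} and Proposition~\ref{propSubstitution2}), so replacing a subformula by a provably equivalent one inside $t$ preserves provable equivalence; all the steps then reduce to reflexivity, symmetry, and transitivity of $\leftrightarrow$, which are available because the outer layer of $\FPL$ is governed by the full \luk\ calculus. No appeal to the semantic completeness results is strictly needed, though one could alternatively phrase the whole argument semantically via Theorem~\ref{thm:coMVcomp} if a more transparent proof is desired.
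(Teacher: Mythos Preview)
Your proposal is correct and follows essentially the same approach as the paper's proof: reflexivity via the identity substitution, and transitivity by composing the two witnessing probabilistic substitutions $\delta_1\circ\delta_2$, checking closure of probabilistic substitutions under composition via Definition~\ref{defProbSub}, and then chaining the provable equivalences. The only cosmetic difference is that the paper phrases the chaining step as applying $\delta_1$ to both sides of $\tau=\delta_2\circ\rho$ rather than as ``replacing $\tau$ inside the equivalence for $\sigma$'', but the underlying argument is identical.
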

\begin{proof}
The fact that $\sqsubseteq$ is reflexive follows from the fact that clearly the identity map is a probabilistic substitution over any set of atomic modal formulas. Let us then show that $\sqsubseteq$ is transitive. Suppose that $\sigma, \tau, \rho$ are probabilistic substitutions such that $\sigma \sqsubseteq \tau$ and $\tau \sqsubseteq \rho$. Then there exist $\delta, \delta'$ probabilistic substitutions defined over the appropriate sets of atomic modal formulas such that $\sigma = \delta \circ \tau$ and $\tau = \delta' \circ \rho$. Thus, consider $\delta'' = \delta \circ \delta'$. Since $\vdash_{FP}$ satisfies the property of substitution invariance with respect to $\delta$ and $\delta'$, this will also hold for their composition. Now, by $\tau = \delta' \circ \rho$ we mean that 
$
\vdash_{FP} \tau (t) \leftrightarrow \delta' \circ \rho(t)
$
for any term $t$ written over the appropriate set of atomic modal formulas. Then by the definition of a probabilistic substitution we derive that 
$
\vdash_{FP} \delta \circ \tau (t) \leftrightarrow \delta\circ (\delta' \circ \rho)(t)
$
and therefore 
$
\vdash_{FP} \delta \circ \tau (t) \leftrightarrow (\delta'' \circ \rho)(t).
$
Since by $\sigma = \delta \circ \tau$ we get 
$
\vdash_{FP} \sigma (t) \leftrightarrow \delta \circ \tau(t),
$
it follows, as desired, that 
$
\vdash_{FP} \sigma (t) \leftrightarrow \delta'' \circ \rho(t)
$
which means that $\sigma = \delta'' \circ \rho$ and then $\sqsubseteq$ is transitive.
\end{proof}
 Every preorder $\leq$ on a set $X$ induces a poset on the quotient defined by the equivalence relations $x\sim y$ iff $x\leq y$ and $y\leq x$. With an abuse of notation, but without danger of confusion, in what follows we will denote by $X$ the quotient $X/{\sim}$. 
\begin{definition}
For every probabilistic unification problem ${\cc I}$, we denote by $U_{\cc I}$ the set of unifiers for ${\cc I}$. By $\mathscr{S}_\mathcal{I}=(U_{\cc I}, \leq)$ we denote the poset induced by the preorder $\leq$ of equally general symbolic unifiers. 
\end{definition}

Clearly, the translation map $^\bullet$ from modal to propositional \luk\ formulas used in the previous sections allows to translate each probabilistic unification problem $\mathcal{I}$ as in (\ref{eq:uniProb}) on events $\mathcal{E}=\{\varphi_1,\ldots, \varphi_k\}$,  to the propositional \luk\ unification problem 
\begin{equation}\label{eqipallino}
\begin{array}{lll}
\mathcal{I}^\bullet&=&\{t_i[P(\varphi_1)^\bullet,\ldots, P(\varphi_k)^\bullet] = u_i[P(\varphi_1)^\bullet,\ldots, P(\varphi_k)^\bullet]\mid i=1,\ldots, m\}\\
&=&\{t_i[p_{\varphi_1},\ldots, p_{\varphi_k}] = u_i[p_{\varphi_1},\ldots, p_{\varphi_k}]\mid i=1,\ldots, m\}.
\end{array}
\end{equation}
Given a solution $\sigma$ for $\mathcal{I}$, its translation $\sigma^\bullet:p_{\varphi_i} \mapsto (\sigma(P(\varphi_i)))^\bullet$ is  such that, for all $i=1,\ldots, m$,
 
\begin{equation}\label{eq:uniftranslated}
\chi_{\sigma(\mathcal{E})}\vdash_{\lu} \sigma^\bullet(t_i[p_{\varphi_1},\ldots, p_{\varphi_k}])\leftrightarrow \sigma^\bullet(u_i[p_{\varphi_1},\ldots, p_{\varphi_k}]),
\end{equation}
where, with an abuse of notation, $\sigma(\mathcal{E})$ denotes, here and henceforth, the set of  events occurring in $\sigma(P(\varphi_1)), \ldots, \sigma(P(\varphi_k))$ and thus $\chi_{\sigma(\mathcal{E})}$ is the \luk\ formula as in Theorem \ref{thm:red1} corresponding to the coherent set of the events in $\sigma(\mathcal{E})$. 

Now we can notice that solving the probabilistic unification problem does not simply reduce to the usual unification in \luk\ logic, for the presence of the formula $\chi_{\sigma(\mathcal{E})}$ that takes care of the coherence of \luk\ valuations on the new events appearing after the substitution. However, we can rephrase the algebraic approach in this context, translating the problem to a finitely presented MV-algebra, and the solution to a homomorphism to a coherent MV-algebra, that we know to be projective by Corollary \ref{corProjProp}. 

It is now convenient to recall Notation \ref{not:Events}: given a set of events $\cc E = \{\varphi_1, \ldots, \varphi_k\}$, we  see the free MV-algebra $\free(\cc E)$ as generated by the variables $p_{\varphi_1}, \ldots, p_{\varphi_k}$.

\begin{definition}
An {\em algebraic probabilistic unification problem} is an MV-algebra determined by a set of events $\mathcal{E}= \{\varphi_1, \ldots, \varphi_k\}$ and a presentation (a finite set of identities) $\mathcal{P}$ over variables $p_{\varphi_1}, \ldots, p_{\varphi_k}$. In symbols ${\bf F}(\cc E, \cc P) = \free(\cc E)/(\scr{C}_{\cc E} \cap \scr{P}_{\cc P})$ where $\scr{P}_{\cc P}$ is the polyhedron associated to the congruence on $\free(\cc E)$ generated by $\cc P$. Given an algebraic probabilistic unification problem ${\bf F}(\cc E, \cc P)$, an {\em algebraic unifier for ${\bf F}(\cc E, \cc P)$} is a homomorphism $h: {\bf F}(\cc E, \cc P)\to {\bf C}$ where ${\bf C}$ is a coherent MV-algebra.
\end{definition}
Notice that each ${\bf F}(\cc E, \cc P)$ is a finitely presented MV-algebra, since both $\scr C_{\cc E}$ and $\scr P_{\cc P}$ are rational polyhedra, and thus so is their intersection. 

The definition we choose for an algebraic probabilistic unification problem, despite seeming ad hoc, is not restrictive: indeed, every finitely presented MV-algebra is an algebraic probabilistic unification problem. It suffices to take as $\cc E$ any set of \luk\ variables $X$. In this case,  for every presentation $\mathcal{P}$, we obtain the algebras ${\bf F}(X,\mathcal{P})$ that are algebraic unification problems as in the propositional case, since $\scr C_{X} = [0,1]^{n}$ where $n = |X|$. 

\begin{remark}
Notice that for some algebraic probabilistic unification problem ${\bf F}(\cc E, \cc P)$ it might happen that  $\scr{C}_{\cc E}$ and $\scr{P}_{\cc P}$ have void intersection. Since $\scr{C}_{\cc E}\neq \emptyset$ for all $\mathcal{E}$, this can happen either if $\scr{P}_{\cc P}=\emptyset$, meaning that the pairs appearing in the presentation ${\cc P}$ have no solution even in \luk\ logic, or if in fact  $\scr{P}_{\cc P}$ is not void but it does not intersect with $\scr{C}_{\cc E}$. The latter case intuitively means that, although the identities presented by ${\cc P}$ have solution  in \luk\ logic, the solutions are not coherent in the sense specified in the above sections. Also notice that, if $\scr{C}_{\cc E}\cap \scr{P}_{\cc P}=\emptyset$, then ${\bf F}(\cc E, \cc P)$ still is finitely presented and it coincides with the one-point, trivial, algebra. 
\end{remark}

Algebraic probabilistic unifiers can be ordered by generality by $\preceq$ as in the propositional case. Moreover, $\preceq$ is a preoder on the set $U_{{\bf F}(\cc E, \cc P) }$ of algebraic unifiers for an algebraic probabilistic unification problem ${\bf F}(\cc E, \cc P) $.

\begin{definition}\label{defAMP}
We denote by $\mathscr{A}_{{\bf F}(\cc E, \cc P) } =(U_{{\bf F}(\cc E, \cc P) }, \preceq)$ the poset of algebraic unifiers for an algebraic probabilistic unification problem  ${\bf F}(\cc E, \cc P) $ and  whose elements are equivalence classes of equally general unifiers.
\end{definition}

 \subsection{Ghilardi-like theorem for probabilistic unification}

Having a natural notion of both symbolic and algebraic unification for $\FPL$, we now  prove that the two approaches are equivalent. In order to do so, 
we will show that, given a symbolic probabilistic unification problem, we can find an algebraic problem with the same unification type, and viceversa. Let us first define  two maps that translate probabilistic unification problems and their unifiers to their algebraic counterpart and viceversa.

Take any symbolic probabilistic unification problem 
$$
\mathcal{I}=\{t_i[P(\varphi_1),\ldots, P(\varphi_k)] = u_i[P(\varphi_1),\ldots, P(\varphi_k)]\mid i=1,\ldots, m\},
$$
and let $\mathcal{E} =\{\varphi_1, \ldots, \varphi_k\}$, $\mathscr{C}_{\mathcal{E}}$ be the coherent set of $\varphi_1,\ldots, \varphi_k$, and $\mathscr{P}_{\mathcal{I}^\bullet}$ be the polyhedron determined by the set of equations in (\ref{eqipallino}). Finally, let 
\begin{equation}\label{eq:defAI}
{\sf A}(\mathcal{I}) = {\bf F}(\cc E, \cc I^{\bullet}) = \free(\mathcal{E})/(\mathscr{C}_{\mathcal{E}}\cap\mathscr{P}_{\mathcal{I}^\bullet}).
\end{equation}

\begin{notation}
In what follows, since substitutions are defined over \emph{terms} (and not elements of free algebras), it is relevant to make the distinction between a term in a term algebra $\term(n)$ and its equivalence class in $\free(n)$. Thus, given a term $t$ over a set of \luk\ variables, we shall write $\overline{t}$ to mean the equivalence class of $t$ in the appropriate free algebra, whenever there is no danger of confusion. Moreover,  to simplify the notation, in an expression such as $[\,\overline{t}\,]_{\mathscr{C}_{\mathcal{E}}\cap\mathscr{P}_{\mathcal{I}^\bullet}}$,  we will substitute the subscript $\mathscr{C}_{\mathcal{E}}\cap\mathscr{P}_{\mathcal{I}^\bullet}$ with simply $\mathcal{I}$. Hence, we write $[\,\overline{t}\,]_\mathcal{I}$ for $[\,\overline{t}\,]_{\mathscr{C}_{\mathcal{E}}\cap\mathscr{P}_{\mathcal{I}^\bullet}}$.
\end{notation}

Given any probabilistic unifier $\sigma$ for $\cc I$, consider ${\sf A}(\sigma) = h_\sigma$ defined as 
\begin{equation}\label{eq:defhsigma}
h_\sigma ([\,\overline{t}\,]_\mathcal{I})= [\,\overline{\sigma^\bullet(t)}\,]_{\mathscr{C}_{\sigma(\mathcal{E})}}
\end{equation}
mapping ${\sf A}(\mathcal{I})$ to the coherent MV-algebra $\free(\sigma(\mathcal{E}))/\mathscr{C}_{\sigma(\mathcal{E})}$.

 \begin{lemma}\label{lemma:alguniff}
 Given a probabilistic unification problem $\cc I$ with a unifier $\sigma$, ${\sf A}(\sigma) = h_\sigma$ is an algebraic unifier for $\sf A(\cc I)$.
 \end{lemma}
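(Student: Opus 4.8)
The plan is to check the three conditions that make $h_\sigma$ an algebraic unifier for ${\sf A}(\mathcal{I})$: that its codomain is coherent, that it is well defined, and that it is a homomorphism. The codomain $\free(\sigma(\mathcal{E}))/\mathscr{C}_{\sigma(\mathcal{E})}$ is coherent by Definition \ref{defCoheMV}, since $\mathscr{C}_{\sigma(\mathcal{E})}$ is the coherent set of the events in $\sigma(\mathcal{E})$. For the other two points I would realize $h_\sigma$ as a homomorphism induced on a quotient. Writing $\sigma(P(\varphi_i))=r_i[P(\gamma_1),\ldots,P(\gamma_t)]$, the translation $\sigma^\bullet$ sends $p_{\varphi_i}$ to $r_i[p_{\gamma_1},\ldots,p_{\gamma_t}]$, so by the universal property of free algebras (Remark \ref{remFree}) it extends to a homomorphism $\tilde\sigma:\free(\mathcal{E})\to\free(\sigma(\mathcal{E}))$ with $\tilde\sigma(\overline t)=\overline{\sigma^\bullet(t)}$. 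Composing with the quotient map $q:\free(\sigma(\mathcal{E}))\to\free(\sigma(\mathcal{E}))/\mathscr{C}_{\sigma(\mathcal{E})}$ yields a homomorphism $q\circ\tilde\sigma$ sending $\overline t\mapsto[\overline{\sigma^\bullet(t)}]_{\mathscr{C}_{\sigma(\mathcal{E})}}$, and $h_\sigma$ is precisely the map induced by $q\circ\tilde\sigma$ on ${\sf A}(\mathcal{I})=\free(\mathcal{E})/(\mathscr{C}_{\mathcal{E}}\cap\mathscr{P}_{\mathcal{I}^\bullet})$. By the universal property of the quotient, $h_\sigma$ is then well defined and automatically a homomorphism as soon as the congruence associated with $\mathscr{C}_{\mathcal{E}}\cap\mathscr{P}_{\mathcal{I}^\bullet}$ is contained in the kernel of $q\circ\tilde\sigma$. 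Thus both outstanding points reduce to a single kernel inclusion.

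Using Proposition \ref{PropPolyForm} and the polyhedron/congruence correspondence, I would recast this kernel inclusion as the geometric statement $\hat\sigma(\mathscr{C}_{\sigma(\mathcal{E})})\subseteq\mathscr{C}_{\mathcal{E}}\cap\mathscr{P}_{\mathcal{I}^\bullet}$, where $\hat\sigma=\langle r_1,\ldots,r_k\rangle:[0,1]^{\sigma(\mathcal{E})}\to[0,1]^{\mathcal{E}}$ is the $\mathbb{Z}$-map dual to $\tilde\sigma$, so that $\sigma^\bullet(t)=t\circ\hat\sigma$ as McNaughton functions. One direction of this reformulation is immediate; for the converse I would separate a putative point of $\hat\sigma(\mathscr{C}_{\sigma(\mathcal{E})})$ lying outside the target polyhedron from it by the defining formula $\chi$ of that polyhedron (Remark \ref{rem:Algo}), taking the pair $t=\chi$, $t'=\top$, which agree on the polyhedron but disagree at the offending point. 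I would then establish the inclusion in two halves.

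For the inclusion into $\mathscr{C}_{\mathcal{E}}$ I would exploit that $\sigma$ is a \emph{probabilistic} substitution (Definition \ref{defProbSub}, cf. Remark \ref{remark:probsub}). Let $\Phi$ be the modal formula over $\{P(\varphi_i)\}$ whose translation $\Phi^\bullet$ is $\chi_{\mathcal{E}}$; since $\oneset(\chi_{\mathcal{E}})=\mathscr{C}_{\mathcal{E}}$, Theorem \ref{thm:red1} gives $\vdash_{FP}\Phi\leftrightarrow\top$. Probabilistic substitution then yields $\vdash_{FP}\sigma(\Phi)\leftrightarrow\top$, and translating back via Theorem \ref{thm:red1} this says exactly that $\sigma^\bullet(\chi_{\mathcal{E}})=\chi_{\mathcal{E}}\circ\hat\sigma$ is identically $1$ on $\mathscr{C}_{\sigma(\mathcal{E})}$, i.e. $\hat\sigma(\mathscr{C}_{\sigma(\mathcal{E})})\subseteq\oneset(\chi_{\mathcal{E}})=\mathscr{C}_{\mathcal{E}}$. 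For the inclusion into $\mathscr{P}_{\mathcal{I}^\bullet}$ I would use that $\sigma$ is a unifier: from (\ref{eq:defprobunifier}) and its translated form (\ref{eq:uniftranslated}) we get $\chi_{\sigma(\mathcal{E})}\vdash_{\lu}\sigma^\bullet(t_i)\leftrightarrow\sigma^\bullet(u_i)$ for every $i$, so $t_i\circ\hat\sigma$ and $u_i\circ\hat\sigma$ agree on $\mathscr{C}_{\sigma(\mathcal{E})}$, placing $\hat\sigma(\mathscr{C}_{\sigma(\mathcal{E})})$ inside the solution polyhedron $\mathscr{P}_{\mathcal{I}^\bullet}=\{x\mid t_i(x)=u_i(x)\ \forall i\}$. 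Intersecting the two halves gives the required inclusion.

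The step I expect to be the crux is the inclusion $\hat\sigma(\mathscr{C}_{\sigma(\mathcal{E})})\subseteq\mathscr{C}_{\mathcal{E}}$: this is exactly the place where the notion of probabilistic substitution, rather than an arbitrary one, is indispensable, as it is what guarantees that applying $\sigma$ keeps values inside the coherent region and hence that $h_\sigma$ genuinely lands in a coherent algebra. Everything else, namely the homomorphism property and well-definedness, is then purely formal, being packaged into the single kernel-inclusion observation of the first paragraph.
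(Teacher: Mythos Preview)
Your proof is correct and uses the same two key ingredients as the paper—probabilistic substitution for the $\mathscr{C}_{\mathcal{E}}$ part and the unifier property for the $\mathscr{P}_{\mathcal{I}^\bullet}$ part—but organizes them differently. The paper works purely algebraically and in two stages: it first passes through $\free(\mathcal{E})/\mathscr{C}_{\mathcal{E}}$, showing that $\sigma_{\mathcal{E}}:[\,\overline t\,]_{\mathscr{C}_{\mathcal{E}}}\mapsto\overline{\sigma^\bullet(t)}$ is well defined precisely because $\sigma$ is a probabilistic substitution, and then applies Proposition~\ref{propFOIntersection} to identify ${\sf A}(\mathcal{I})$ with $(\free(\mathcal{E})/\mathscr{C}_{\mathcal{E}})/\hat{\mathcal{I}}$ and the Second Homomorphism Theorem to factor $\mu_2\circ\sigma_{\mathcal{E}}$ through this further quotient, checking on generators that the pairs $([\,\overline{t_i}\,]_{\mathscr{C}_{\mathcal{E}}},[\,\overline{u_i}\,]_{\mathscr{C}_{\mathcal{E}}})$ lie in the kernel via~(\ref{eq:uniftranslated}). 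You instead collapse everything into a single kernel inclusion for $q\circ\tilde\sigma$ and verify it geometrically via the dual $\mathbb{Z}$-map $\hat\sigma$, which lets you bypass Proposition~\ref{propFOIntersection} altogether. Your route is arguably more transparent about \emph{where} each hypothesis is spent (the inclusion into $\mathscr{C}_{\mathcal{E}}$ isolates exactly the role of ``probabilistic'' in Definition~\ref{defProbSub}), at the cost of invoking the polyhedral duality; the paper's two-step factoring stays entirely on the algebraic side and makes the later identity $h_\sigma=h\circ\iota$ explicit, which is convenient for the subsequent Lemma~\ref{lemma:probcomp}. A minor remark: for your argument you only need the forward direction ``$\hat\sigma(\mathscr{C}_{\sigma(\mathcal{E})})\subseteq\mathscr{C}_{\mathcal{E}}\cap\mathscr{P}_{\mathcal{I}^\bullet}$ implies the kernel inclusion'', which is immediate; the separating-formula converse you sketch is correct but not needed.
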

 \begin{proof}
 Let $\mathcal{I}$ be as in (\ref{eq:uniProb}) and let  us start considering the maps $\sigma^\bullet : \free(\mathcal{E}) \to \free(\sigma(\mathcal{E}))$ and the natural epimorphism $\mu: \free(\mathcal{E})\to \free(\mathcal{E})/\mathscr{C}_{\mathcal{E}}$. We can then define the homomorphism $\sigma_{\mathcal E}: \free(\mathcal{E})/\mathscr{C}_{\mathcal{E}} \to  \free(\sigma(\mathcal{E}))$ as:
 \begin{equation}\label{eq:sigmae}
 \sigma_{\mathcal E}[\,\overline{t}\,]_{\mathscr{C}_{\mathcal{E}}} =\overline{\sigma^\bullet(t)}.
 \end{equation}
 The map is well-defined because, if $[\,\overline{t}\,]_{\mathscr{C}_{\mathcal{E}}} = [\,\overline{t'}\,]_{\mathscr{C}_{\mathcal{E}}}$,  since $\sigma$ is in particular a probabilistic substitution, $\overline{\sigma^\bullet (t)} = \overline{\sigma^\bullet(t')}$. Thus,  $\sigma_{\mathcal{E}}$ is an homomorphism by the Second Homomorphism Theorem (see \cite[Theorem 6.15]{BS}). 
By Proposition \ref{propFOIntersection}, 
 $$
 {\sf A}(\mathcal{I}) = \free(\mathcal{E})/(\mathscr{C}_{\mathcal{E}}\cap\mathscr{P}_{\mathcal{I}^\bullet}) \cong (\free(\mathcal{E})/\mathscr{C}_{\mathcal{E}})/ \hat{\mathcal{I}},
 $$ 
 where $\hat{\mathcal{I}}$ is the congruence on $\free(\mathcal{E})/\mathscr{C}_{\mathcal{E}}$ generated by the pairs $([\,\overline{t_i}\,]_{\mathscr{C}_{\mathcal{E}}}, [\,\overline{u_i}\,]_{\mathscr{C}_{\mathcal{E}}})$ where $t_i, u_i$ are the terms defining the unification problem as in (\ref{eqipallino}), for $i = 1, \ldots, m$.

 Let us then consider the two natural epimorphisms $\mu_1: (\free(\mathcal{E})/\mathscr{C}_{\mathcal{E}})\to (\free(\mathcal{E})/\mathscr{C}_{\mathcal{E}})/ \hat{\mathcal{I}}$ and $\mu_2:\free(\sigma(\mathcal{E}))\to\free(\sigma(\mathcal{E}))/\mathscr{C}_{\sigma(\mathcal{E})}$ as in the following diagram.

 $$
\xymatrix{
\free(\mathcal{E})/\mathscr{C}_{\mathcal{E}} \ar[r]^{\sigma_\mathcal{E}}\ar[d]_{\mu_1}& \free(\sigma(\mathcal{E}))\ar[r]^{\mu_2}&\free(\sigma(\mathcal{E}))/\mathscr{C}_{\sigma(\mathcal{E})}\\
 (\free(\mathcal{E})/\mathscr{C}_{\mathcal{E}})/ \hat{\mathcal{I}}\ar@{-->}[urr]_{h}& &
}
$$
We now show that $\ker\mu_1\subseteq\ker(\mu_2\circ\sigma_\mathcal{E})$ so that (again by the Second Homomorphism Theorem \cite[Theorem 6.15]{BS}) there exists a homomorphism $h$ closing the diagram. It suffices to show that the generators of $\hat{\mathcal{I}}$ are in $\ker(\mu_2\circ\sigma_\mathcal{E})$. Take then $([\,\overline{t_i}\,]_{\mathscr{C}_{\mathcal{E}}}, [\,\overline{u_i}\,]_{\mathscr{C}_{\mathcal{E}}})$ with $t_i, u_i$ from (\ref{eqipallino}), for any $i = 1 ,\ldots ,m$. It follows directly from (\ref{eq:uniftranslated}) that $[\,\overline{\sigma^\bullet(t_i)}\,]_{\mathscr{C}_{\sigma(\mathcal{E})}} = [\,\overline{\sigma^\bullet(u_i)}\,]_{\mathscr{C}_{\sigma(\mathcal{E})}}$. By the definition of $\sigma_{\mathcal E}$ in (\ref{eq:sigmae}), this yields $\mu_2\circ\sigma_\mathcal{E} ([\,\overline{t_i}\,]_{\mathscr{C}_{\mathcal{E}}})= \mu_2\circ\sigma_\mathcal{E}([\,\overline{u_i}\,]_{\mathscr{C}_{\mathcal{E}}})$. Therefore the generators of $\hat{\mathcal{I}}$ are in $\ker(\mu_2\circ\sigma_\mathcal{E})$ and we can close the diagram.   

Finally, let us call $\iota$ the isomorphism given by Proposition \ref{propFOIntersection}:
$$
\iota: {\sf A}(\mathcal{I}) = \free(\mathcal{E})/(\mathscr{C}_{\mathcal{E}}\cap \mathscr{P}_{\mathcal{I}^\bullet}) \to (\free(\mathcal{E})/\mathscr{C}_{\mathcal{E}})/ \hat{\mathcal{I}}
$$ 
defined as $\iota([\,\overline{t}\,]_{\mathcal I}) = [[\,\overline{t}\,]_{\mathscr{C}_{\mathcal{E}}}]_{\hat{\mathcal{I}}}$. Since $h_\sigma$ is exactly $h \circ \iota$, as it directly follows from the definition of $h_\sigma$ in (\ref{eq:defhsigma}), it is an homomorphism to a coherent MV-algebra and therefore a unifier for ${\sf A}(\mathcal{I})$.
 \end{proof}

In light of the result above, let us define, for every probabilistic unification problem $\mathcal{I}$, $\mathsf{A}(\mathscr{S}_\mathcal{I})$ to be the poset whose universe is $\{h_\sigma\mid \sigma\in \mathscr{S}_\mathcal{I}\}$ and the order is the generality order as in Definition \ref{defAMP}.

Now, consider an algebraic probabilistic unification problem ${\bf F}(\cc E, \cc P) = \free(\cc E)/(\scr{C}_{\cc E} \cap \scr{P}_{\cc P}) $, where
$
\mathcal{P}=\{(t_i[p_{\varphi_{1}},\ldots, p_{\varphi_{k}}], u_i[p_{\varphi_{1}},\ldots, p_{\varphi_{k}}])\mid i=1,\ldots, m\}.
$
We define in the obvious way 
\begin{equation}\label{eqFromAlgToSynt}
{\sf S}( {\bf F}(\cc E, \cc P))=\{t_i[P(\varphi_1),\ldots, P(\varphi_k)] = u_i[P(\varphi_1),\ldots, P(\varphi_k)]\mid i=1,\ldots, m\}. 
\end{equation}
Clearly, ${\sf S}( {\bf F}(\cc E, \cc P))$ is a probabilistic unification problem. We shall now show how to interpret every algebraic unifier for  ${\bf F}(\cc E, \cc P)$ as a probabilistic unifier for ${\sf S}({\bf F}(\cc E, \cc P)$. To do so, let $h$ be a homomorphism of ${\bf F}(\cc E, \cc P)$ to a coherent MV-algebra $\mathbf{C}$. By definition, there exists  a set of events $\mathcal{T}=\{\tau_1,\ldots, \tau_l\}$ such that ${\bf C}$ is isomorphic to $\free(\mathcal{T})/\mathscr{C}_\mathcal{T}$ via a map $\lambda$. Since $\free(\mathcal{T})/\mathscr{C}_\mathcal{T}$ is projective by Corollary \ref{corProjProp}, given the natural epimorphism $j: \free(\cc T) \to \free(\mathcal{T})/\mathscr{C}_\mathcal{T}$  there is an  embedding $i: \free(\mathcal{T})/\mathscr{C}_\mathcal{T} \to \free(\cc T)$  such that $j \circ i = id_{\free(\mathcal{T})/\mathscr{C}_\mathcal{T}}$. We call $s_h$ the homomorphism from $\free(\mathcal{E})$ to $\free(\cc T)$ that is the composition $s_h = i \circ \lambda \circ h \circ \mu$
as clarifed in the following diagram:
 $$
\xymatrix@1{
\free(\cc E) \ar[r]^-{\mu} \ar@/^3.0pc/[rrrr]^{s_{h}}& \free(\cc E) / (\scr C_{\cc E} \cap \scr P_{\cc P}) \ar[r]^-h &\alg C \ar[r]^-\lambda &\free(\cc T) /\scr{C}_{\cc T} \ar@/^/[r]^{i} &\free(\cc T)\ar@/^/[l]_{j}\\
}
$$
Thus, for each $p_{\varphi_{1}}, \ldots, p_{\varphi_{k}}$, consider a term $r_i$ such that $s_h[p_{\varphi_{i}}] = [r_i(p_{\tau_{1}}, \ldots, p_{\tau_{l}})]$, then we define ${\sf S}(h) = \sigma_h$ as:
\begin{equation}\label{eq:defsigmah}
\sigma_h(P(\varphi_{i})) = r_i[P(\tau_1), \ldots, P(\tau_l)]
\end{equation}

\begin{lemma}\label{lemma:symbunif}
Given any algebraic probabilistic unification problem ${\bf F}(\cc E, \cc P)$ with unifier $h$, ${\sf S}(h) = \sigma_{h}$ is a probabilistic unifier for ${\sf S}( {\bf F}(\cc E, \cc P))$.
\end{lemma}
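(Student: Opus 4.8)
The plan is to push every provability statement through the translation $^\bullet$ and de Finetti completeness, so that $\vdash_{FP}$-equivalence becomes an equality of congruence classes in a coherent quotient of a free MV-algebra, and then to read those equalities directly off the homomorphism $s_h$ built in the statement. First I would record the dictionary that does all the work: for a set of events $\mathcal{D}$ and terms $a,b$ over the variables $\{p_\delta\mid\delta\in\mathcal{D}\}$, I claim
\[
\vdash_{FP} a[P(\delta_1),\dots]\leftrightarrow b[P(\delta_1),\dots]\quad\Longleftrightarrow\quad [\,\overline a\,]_{\mathscr{C}_\mathcal{D}}=[\,\overline b\,]_{\mathscr{C}_\mathcal{D}}\ \text{ in }\ \free(\mathcal{D})/\mathscr{C}_\mathcal{D}.
\]
This follows by reading Corollary \ref{ComplCoherBooks} with the (always satisfied) premise $\top$: the left-hand side holds iff every coherent book $\beta\in\mathscr{C}_\mathcal{D}$ satisfies the biconditional, i.e.\ iff $a$ and $b$ take the same value at every point of $\mathscr{C}_\mathcal{D}$; and under the isomorphism $\free(\mathcal{D})/\mathscr{C}_\mathcal{D}\cong\mathcal{M}(\mathscr{C}_\mathcal{D})$ of \cite[Theorem 6.3]{MuAdvanced} the class $[\,\overline a\,]_{\mathscr{C}_\mathcal{D}}$ is exactly the McNaughton function of $a$ restricted to $\mathscr{C}_\mathcal{D}$, so this is precisely $[\,\overline a\,]_{\mathscr{C}_\mathcal{D}}=[\,\overline b\,]_{\mathscr{C}_\mathcal{D}}$.

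Two elementary facts about $s_h$ complete the toolkit. Since $j\circ i=id$, the defining diagram gives $j\circ s_h=\lambda\circ h\circ\mu$, where $\mu$ and $j$ are the quotient maps onto ${\bf F}(\cc E,\cc P)$ and onto $\free(\cc T)/\mathscr{C}_\mathcal{T}$ respectively; and since $s_h$ is a homomorphism sending each generator $p_{\varphi_i}$ to $\overline{r_i}=\overline{\sigma_h^\bullet(p_{\varphi_i})}$, one has $s_h(\overline a)=\overline{\sigma_h^\bullet(a)}$ for every term $a$ over $\{p_{\varphi_1},\dots,p_{\varphi_k}\}$. With this in place, showing that $\sigma_h$ is a probabilistic substitution is immediate: if $\vdash_{FP}\Phi\leftrightarrow\Psi$ for $\Phi=a[P(\varphi_i)]$ and $\Psi=b[P(\varphi_i)]$, the dictionary over $\mathcal{E}$ gives $[\,\overline a\,]_{\mathscr{C}_\mathcal{E}}=[\,\overline b\,]_{\mathscr{C}_\mathcal{E}}$; because $\mathscr{C}_\mathcal{E}\cap\mathscr{P}_{\cc P}\subseteq\mathscr{C}_\mathcal{E}$, the congruence associated with $\mathscr{C}_\mathcal{E}$ is contained in the one defining ${\bf F}(\cc E,\cc P)$, whence $\mu(\overline a)=\mu(\overline b)$. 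Applying $\lambda\circ h$ and using $\lambda\circ h\circ\mu=j\circ s_h$ together with $s_h(\overline a)=\overline{\sigma_h^\bullet(a)}$ yields $[\,\overline{\sigma_h^\bullet(a)}\,]_{\mathscr{C}_\mathcal{T}}=[\,\overline{\sigma_h^\bullet(b)}\,]_{\mathscr{C}_\mathcal{T}}$, which by the dictionary over $\mathcal{T}$ is exactly $\vdash_{FP}\sigma_h(\Phi)\leftrightarrow\sigma_h(\Psi)$.

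The unification identities are obtained by the same computation, started from the presentation. For each $i$ the pair $(\overline{t_i},\overline{u_i})$ generates the congruence of $\mathscr{P}_{\cc P}$, so $\overline{t_i}\equiv\overline{u_i}$ modulo $\mathscr{P}_{\cc P}$ and hence also modulo the coarser $\mathscr{C}_\mathcal{E}\cap\mathscr{P}_{\cc P}$, giving $\mu(\overline{t_i})=\mu(\overline{u_i})$. Pushing this through the identity $\lambda\circ h\circ\mu=j\circ s_h$ exactly as before produces $[\,\overline{\sigma_h^\bullet(t_i)}\,]_{\mathscr{C}_\mathcal{T}}=[\,\overline{\sigma_h^\bullet(u_i)}\,]_{\mathscr{C}_\mathcal{T}}$, i.e.\ $\vdash_{FP} t_i[\sigma_h(P(\varphi_1)),\dots]\leftrightarrow u_i[\sigma_h(P(\varphi_1)),\dots]$ for every $i$. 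Together with the previous paragraph this shows that $\sigma_h={\sf S}(h)$ is a probabilistic unifier for ${\sf S}({\bf F}(\cc E,\cc P))$.

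The step needing the most care is the dictionary itself: one must check that $\FPL$-provable equivalence corresponds to \emph{genuine} equality of classes in $\free(\mathcal{D})/\mathscr{C}_\mathcal{D}$ — and not merely to the two formulas having the same set of homomorphic images equal to $1$, which is the weaker condition one would get from treating $\Phi\leftrightarrow\Psi$ as the pair of deductions $\Phi\vdash_{FP}\Psi$ and $\Psi\vdash_{FP}\Phi$; this is why I invoke completeness over coherent books directly rather than Theorem \ref{thm:red1}. The only other thing to keep straight is the direction of refinement among the congruences attached to $\mathscr{C}_\mathcal{E}$, $\mathscr{P}_{\cc P}$ and $\mathscr{C}_\mathcal{E}\cap\mathscr{P}_{\cc P}$, which is what licenses passing each equality down to $\mu$. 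Everything else is routine diagram-chasing identical in spirit to the proof of Lemma \ref{lemma:alguniff}.
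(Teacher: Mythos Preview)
Your proof is correct and follows essentially the same route as the paper's: establish the dictionary between $\vdash_{FP}$-equivalence over events $\mathcal{D}$ and equality in $\free(\mathcal{D})/\mathscr{C}_\mathcal{D}$, then use the identity $j\circ s_h=\lambda\circ h\circ\mu$ together with the congruence inclusions coming from $\mathscr{C}_\mathcal{E}\cap\mathscr{P}_{\cc P}\subseteq\mathscr{C}_\mathcal{E}$ and $\mathscr{C}_\mathcal{E}\cap\mathscr{P}_{\cc P}\subseteq\mathscr{P}_{\cc P}$ to push equalities through to $\free(\mathcal{T})/\mathscr{C}_\mathcal{T}$. The paper cites Theorem~\ref{thm:red1} for the dictionary where you invoke Corollary~\ref{ComplCoherBooks}, but applied to $\top\vdash_{FP}\Phi\leftrightarrow\Psi$ these give the same thing, so your extra caution about ``genuine equality versus equal onesets'' is well-placed but not a point of divergence.
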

\begin{proof}
We first show that $\sigma_{h}$ as defined in (\ref{eq:defsigmah}) is a probabilistic substitution, that is to say, if 
$$
\vdash_{FP} t[P(\varphi_{1}), \ldots, P(\varphi_{k})] \leftrightarrow u[P(\varphi_{1}), \ldots, P(\varphi_{k})]
$$ 
then 
$$
\vdash_{FP} t[\sigma_{h}(P(\varphi_{1})), \ldots, \sigma_{h}(P(\varphi_{k}))] \leftrightarrow u[\sigma_{h}(P(\varphi_{1})), \ldots, \sigma_{h}(P(\varphi_{k}))].
$$
Suppose  that $\vdash_{FP} t[P(\varphi_{1}), \ldots, P(\varphi_{k})] \leftrightarrow u[P(\varphi_{1}), \ldots, P(\varphi_{k})]$. By Theorem \ref{thm:red1}, this happens if and only if $[\,\overline{t(p_{\varphi_{1}}, \ldots, p_{\varphi_{k}})}\,]_{\scr C_{\cc E}} = [\,\overline{u(p_{\varphi_{1}}, \ldots, p_{\varphi_{k}})}\,]_{\scr C_{\cc E}}$, thus 
$$
[\,\overline{t(p_{\varphi_{1}}, \ldots, p_{\varphi_{k}})}\,]_{\scr C_{\cc E} \cap \scr P_{\cc P}} = [\,\overline{u(p_{\varphi_{1}}, \ldots, p_{\varphi_{k}})}\,]_{\scr C_{\cc E} \cap \scr P_{\cc P}}
$$ 
which implies that $$s_{h}(\,\overline{t(p_{\varphi_{1}}, \ldots, p_{\varphi_{k}})}\,) = s_{h}(\,\overline{u(p_{\varphi_{1}}, \ldots, p_{\varphi_{k}})}\,).$$
Being $s_{h}$ a homomorphism, $t(s_{h}(\,\overline{p_{\varphi_{1}}}\,), \ldots, s_{h}(\,\overline{p_{\varphi_{k}}}\,)) = u(s_{h}(\,\overline{p_{\varphi_{1}}}\,), \ldots, s_{h}(\,\overline{p_{\varphi_{k}}}\,)),$ thus also 
$$
[t(s_{h}(\,\overline{p_{\varphi_{1}}}\,), \ldots, s_{h}(\,\overline{p_{\varphi_{k}}}\,))]_{\scr C_{\tau}} = [u(s_{h}(\,\overline{p_{\varphi_{1}}}\,), \ldots, s_{h}(\,\overline{p_{\varphi_{k}}}\,))]_{\scr C_{\tau}}$$
which is equivalent via Theorem \ref{thm:red1} to what we needed to show, that is  
$$
\vdash_{FP} t[\sigma_{h}(P(\varphi_{1})), \ldots, \sigma_{h}(P(\varphi_{k}))] \leftrightarrow u[\sigma_{h}(P(\varphi_{1})), \ldots, \sigma_{h}(P(\varphi_{k}))].
$$
Notice that the choice of the term $r$ in (\ref{eq:defsigmah}) does not matter because of Proposition \ref{propSubstitution2}.

We proved that $\sigma_{h}$ is a probabilistic substitution, we now prove that it is a unifier for ${\sf S}( {\bf F}(\cc E, \cc P)) = \{t_i[P(\varphi_1),\ldots, P(\varphi_k)] = u_i[P(\varphi_1),\ldots, P(\varphi_k)]\mid i=1,\ldots, m\}$. We need to show that for $i = 1 \ldots m$, 
$$
\vdash_{FP} t_{i}[\sigma_{h}(P(\varphi_{1})), \ldots, \sigma_{h}(P(\varphi_{k}))] \leftrightarrow u_{i}[\sigma_{h}(P(\varphi_{1})), \ldots, \sigma_{h}(P(\varphi_{k}))].
$$
This happens iff  
$$
[t_{i}(s_{h}(\,\overline{p_{\varphi_{1}}}\,), \ldots, s_{h}(\,\overline{p_{\varphi_{k}}}\,))]_{\scr C_{\tau}} = [u_{i}(s_{h}(\,\overline{p_{\varphi_{1}}}\,), \ldots, s_{h}(\,\overline{p_{\varphi_{k}}}\,))]_{\scr C_{\tau}}.
$$
In other words, iff $j \circ s_{h}(\,\overline{t_{i}(p_{\varphi_{1}}, \ldots, \varphi_{k})}\,) =j\circ s_{h} (\,\overline{u_{i}(p_{\varphi_{1}}, \ldots, \varphi_{k})}\,)$. This holds since $$j \circ s_{h} = j \circ i \circ \lambda \circ h \circ \mu = \lambda \circ h \circ \mu$$ and $\mu(\,\overline{t_{i}(p_{\varphi_{1}}, \ldots, \varphi_{k})}\,)= \mu(\,\overline{u_{i}(p_{\varphi_{1}}, \ldots, \varphi_{k})}\,)$ because $t_{i}(p_{\varphi_{1}}, \ldots, \varphi_{k}) = u_{i}(p_{\varphi_{1}}, \ldots, \varphi_{k}) \in \cc P$, thus the proof is completed. 
\end{proof}
We will now show that, given a probabilistic unification problem $$\mathcal{I}=\{t_i[P(\varphi_1),\ldots, P(\varphi_k)] = u_i[P(\varphi_1),\ldots, P(\varphi_k)]\mid i=1,\ldots, m\},$$ its poset of probabilistic unifiers $\scr S_{\cc I}$ is isomorphic to the poset of algebraic unifiers of the algebraic unification problem ${\sf A}(\cc I) = {\bf F}(\cc E, \cc I^{\bullet}) = \free(\mathcal{E})/(\mathscr{C}_{\mathcal{E}}\cap\mathscr{P}_{\mathcal{I}^\bullet}).$
In order to do so, we will prove that the mapping $\sf A$ is surjective on the algebraic unifiers of  ${\sf A}(\cc I)$ and it preserves the partial order. 
We first need the following technical lemma.
\begin{lemma}\label{lemma:probcomp}
Given a probabilistic unification problem $\cc I$, ${\sf S}({\sf A}(\cc I)) = \cc I$, and given any probabilistic unifier $\sigma$, $\sigma \sim \sigma_{h_{\sigma}} = {\sf S}({\sf A}(\sigma))$ in the preorder of unifiers and hence they coincide in the poset  $U_{\cc I}$. Similarly, given any algebraic unification problem $\alg F(\cc E, \cc P)$,  ${\sf A}({\sf S}(\alg F(\cc E, \cc P))) = \alg F(\cc E, \cc P)$ and given any algebraic unifier $k$, $k \sim h_{\sigma_{k}} = {\sf A}({\sf S}(k))$ in the preorder of unifiers and hence they coincide in the poset $U_{\alg F(\cc E, \cc P)}$.
\end{lemma}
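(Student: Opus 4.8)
The plan is to split the four assertions into two groups: the two identities on \emph{problems}, which are bookkeeping unwindings of the definitions of $\sf A$ and $\sf S$, and the two $\sim$-equivalences on \emph{unifiers}, which carry all the content. I would first dispatch ${\sf S}({\sf A}(\cc I))=\cc I$ and ${\sf A}({\sf S}(\alg F(\cc E,\cc P)))=\alg F(\cc E,\cc P)$ directly from the definitions. Writing $\cc I=\{t_i[P(\varphi_1),\ldots,P(\varphi_k)]=u_i[\ldots]\mid i=1,\ldots,m\}$, by (\ref{eq:defAI}) the problem ${\sf A}(\cc I)$ is presented over $p_{\varphi_1},\ldots,p_{\varphi_k}$ by $\cc I^\bullet=\{(t_i(p_{\varphi_\bullet}),u_i(p_{\varphi_\bullet}))\}$; applying ${\sf S}$ as in (\ref{eqFromAlgToSynt}) reinserts the modality $P$ and returns exactly $\cc I$, since $P(\varphi_i)^\bullet=p_{\varphi_i}$. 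The reverse composite is symmetric: ${\sf S}$ strips the modality from $\cc P=\{(t_i,u_i)\}$ and ${\sf A}$ restores it as $\cc I^\bullet=\cc P$. These two steps are purely definitional.

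For $\sigma\sim\sigma_{h_\sigma}$ I would compute $\sigma_{h_\sigma}={\sf S}(h_\sigma)$ explicitly. Here the coherent codomain of $h_\sigma$ is $\free(\sigma(\cc E))/\scr C_{\sigma(\cc E)}$ itself, so $\lambda=\mathrm{id}$ and, unwinding (\ref{eq:defsigmah}), $s_{h_\sigma}(p_{\varphi_i})=i([\,\overline{\sigma^\bullet(p_{\varphi_i})}\,]_{\scr C_{\sigma(\cc E)}})$ for the section $i$ with $j\circ i=\mathrm{id}$. Applying $j$ and using $j\circ i=\mathrm{id}$ yields $[\,\overline{\sigma_{h_\sigma}^\bullet(p_{\varphi_i})}\,]_{\scr C_{\sigma(\cc E)}}=[\,\overline{\sigma^\bullet(p_{\varphi_i})}\,]_{\scr C_{\sigma(\cc E)}}$, i.e. the two McNaughton functions agree pointwise on $\scr C_{\sigma(\cc E)}$. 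Applying Theorem \ref{thm:red1} to theoremhood ($\top\vdash_{FP}(-)$), this is exactly $\vdash_{FP}\sigma(P(\varphi_i))\leftrightarrow\sigma_{h_\sigma}(P(\varphi_i))$ for every $i$. Substitution of equivalents (Proposition \ref{propSubstitution}) lifts this from atoms to all outer terms, so taking $\delta=\mathrm{id}$ witnesses both $\sigma\sqsubseteq\sigma_{h_\sigma}$ and $\sigma_{h_\sigma}\sqsubseteq\sigma$, giving $\sigma\sim\sigma_{h_\sigma}$.

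For $k\sim h_{\sigma_k}$ I would argue at the algebraic level, where the genuine difficulty lives. Set $\mathbf C\cong\free(\cc T)/\scr C_{\cc T}$ via $\lambda$, $\sigma_k={\sf S}(k)$, and $h_{\sigma_k}={\sf A}(\sigma_k)$, whose codomain is $\mathbf P:=\free(\sigma_k(\cc E))/\scr C_{\sigma_k(\cc E)}$ with $\sigma_k(\cc E)\subseteq\cc T$. From $s_k=i\circ\lambda\circ k\circ\mu$ and $j\circ i=\mathrm{id}$ one checks $\lambda\circ k([\,\overline t\,]_{\cc I})=[\,\overline{\sigma_k^\bullet(t)}\,]_{\scr C_{\cc T}}$ while $h_{\sigma_k}([\,\overline t\,]_{\cc I})=[\,\overline{\sigma_k^\bullet(t)}\,]_{\scr C_{\sigma_k(\cc E)}}$. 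The embedding $\hat\pi$ dual to the projection $\scr C_{\cc T}\to\scr C_{\sigma_k(\cc E)}$ (Proposition \ref{prop:restrictC}, with injectivity as in the proof of Proposition \ref{JEP}) satisfies $\hat\pi\circ h_{\sigma_k}=\lambda\circ k$, which immediately gives $k\preceq h_{\sigma_k}$.

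The hard part is the converse $h_{\sigma_k}\preceq k$: a naive attempt to left-invert $\hat\pi$ fails, since $\mathbf P$ need not be a retract of $\mathbf C$ (a coordinate projection of a coherent polytope need not admit a $\mathbb Z$-section). The device that resolves this is to exploit that $\sigma_k^\bullet(t)$, by the very definition of $\sigma_k(\cc E)$, involves only the variables $p_{\tau_j}$ with $\tau_j\in\sigma_k(\cc E)$: letting $\mathrm{pr}\colon\free(\cc T)\to\free(\sigma_k(\cc E))$ be the homomorphism fixing these variables and sending the remaining ones to $\bot$, and $q\colon\free(\sigma_k(\cc E))\to\mathbf P$ the quotient map, the composite $\rho=q\circ\mathrm{pr}\circ i\circ\lambda\colon\mathbf C\to\mathbf P$ is a well-defined homomorphism and satisfies $\rho\circ k=h_{\sigma_k}$, because $\mathrm{pr}$ acts as the identity on $\overline{\sigma_k^\bullet(t)}$. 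This yields $h_{\sigma_k}\preceq k$, hence $k\sim h_{\sigma_k}$. I expect this $\mathrm{pr}$-trick, circumventing the missing $\mathbb Z$-section, to be the crux of the whole lemma; everything else reduces to unwinding the definitions of $\sf A$, $\sf S$ and the translation $(-)^\bullet$ together with Theorem \ref{thm:red1}.
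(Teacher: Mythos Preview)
Your argument is correct. For the two identities on problems and for $\sigma\sim\sigma_{h_\sigma}$ your treatment is essentially the paper's: both take $\delta=\mathrm{id}$ and reduce the equivalence to $[\,\overline{\sigma^\bullet(t)}\,]_{\scr C_{\sigma(\cc E)}}=[s_{h_\sigma}(\overline t)\,]_{\scr C_{\sigma(\cc E)}}$, obtained from $j\circ i=\mathrm{id}$ and Theorem~\ref{thm:red1}.

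For $k\sim h_{\sigma_k}$, however, you take a genuinely more laborious route than the paper. You treat the codomains $\free(\cc T)/\scr C_{\cc T}$ and $\free(\sigma_k(\cc E))/\scr C_{\sigma_k(\cc E)}$ as distinct and build two separate bridging maps: the embedding $\hat\pi$ (dual to the coordinate projection) for $k\preceq h_{\sigma_k}$, and the ``$\mathrm{pr}$-trick'' $\rho=q\circ\mathrm{pr}\circ i\circ\lambda$ for the converse. This works, but the paper avoids it entirely. Since $\sigma_k(P(\varphi_i))=r_i[P(\tau_1),\ldots,P(\tau_l)]$ is written over the full list $\tau_1,\ldots,\tau_l$ (one may always arrange this via Proposition~\ref{propNumVariables}), the paper simply takes $\sigma_k(\cc E)=\cc T$; then the codomain of $h_{\sigma_k}$ \emph{is} $\free(\cc T)/\scr C_{\cc T}$, and a one-line computation gives $h_{\sigma_k}=\lambda\circ k$. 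Both directions of $\sim$ are then witnessed by the isomorphism $\lambda$ and its inverse, with no need for $\hat\pi$ or $\mathrm{pr}$. So what you call ``the crux of the whole lemma'' is in fact a self-imposed complication; the paper's device is just to normalise the event set so that an isomorphism does all the work. Your version has the minor merit of not relying on that normalisation, but at the cost of the extra machinery.
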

\begin{proof}
Let $\cc I$ be as in (\ref{eq:uniProb}), 
$\mathcal{I}=\{t_i[P(\varphi_1),\ldots, P(\varphi_k)] = u_i[P(\varphi_1),\ldots, P(\varphi_k)]\mid i=1,\ldots, m\}.$ 
The facts that ${\sf S}({\sf A}(\cc I)) = \cc I$ and ${\sf A}({\sf S}(\alg F(\cc E, \cc P))) = \alg F(\cc E, \cc P)$ follow directly from the definitions of ${\sf A}$ and ${\sf S}$.

Let $\sigma$ be a probabilistic unifier for $\cc I$, then given the fact that ${\sf S}({\sf A}(\cc I)) = \cc I$ and Lemmas \ref{lemma:alguniff} and \ref{lemma:symbunif}, $\sigma_{h_{\sigma}} $ is also a probabilistic unifier for $\cc I$. We show that $\sigma \sqsubseteq \sigma_{h_{\sigma}}$ and $\sigma_{h_{\sigma}} \sqsubseteq \sigma$. This means that we need to find $\delta, \delta'$ probabilistic substitutions such that $\FPL$ proves that $\sigma = \delta \circ \sigma_{h_{\sigma}}$ and $\sigma_{h_{\sigma}} = \delta' \circ \sigma$ in the sense of (\ref{eq:composition}). It suffices to take $\delta$ and $\delta'$ to be the identity maps on the appropriate set of probabilistic formulas since
$$
\vdash_{FP} t[\sigma(P(\varphi_{1})), \ldots, \sigma(P(\varphi_{k}))] \leftrightarrow t[(\sigma_{h_{\sigma}})(P(\varphi_{1})), \ldots,(\sigma_{h_{\sigma}})(P(\varphi_{k}))].
$$
Indeed, by Theorem \ref{thm:red1} and the definition of $\sigma_{h_{\sigma}}$ (see in particular (\ref{eq:defsigmah})), this is equivalent to saying that for all $\overline{t} \in \free(\cc E)$, $[\,\overline{\sigma^{\bullet}(t)}\,]_{\scr C_{\sigma(\cc E)}} = [s_{h_{\sigma}}(\overline{t})]_{\scr C_{\sigma(\cc E)}}$.

In order to check that this holds, let us unpack the definition of $s_{h_{\sigma}}$. First, recall that $h_\sigma([\,\overline{t}\,]_{\cc I}) =[\,\overline{\sigma^\bullet (t)\,}]_{\scr C_{\sigma(\cc E)}}$. Then, referring to the notation yielding (\ref{eq:defsigmah}), since $h_{\sigma}$ has as codomain a coherent algebra, $\free(\sigma(\mathcal{E}))/\mathscr{C}_{\sigma(\mathcal{E})}$, we can take $\lambda$ to be the identity map. Moreover, we have $i: \free(\sigma(\mathcal{E}))/\mathscr{C}_{\sigma(\mathcal{E})} \to \free(\sigma(\mathcal{E})), j : \free(\sigma(\mathcal{E}))/\mathscr{C} \to \free(\sigma(\mathcal{E}))/\mathscr{C}_{\sigma(\mathcal{E})}$ such that $j \circ i = id$.
Thus, $s_{h_{\sigma}} = i \circ h_\sigma \circ \mu$. Hence:
$$
s_{h_{\sigma}}(\,\overline{t}\,) = i \circ h_{\sigma}\circ \mu(\,\overline{t}\,) = i \circ h_{\sigma}([\,\overline{t}\,]_{\cc I}) = i  ([\,\overline{\sigma^{\bullet}(t)}\,]_{\scr C_{\sigma(\cc E)}}) = i \circ j  (\,\overline{\sigma^{\bullet}(t)}\,).
$$
Therefore we can conclude:
$$[s_{h_{\sigma}}(\,\overline{t}\,)]_{\scr C_{\sigma(\cc E)}} = j \circ i \circ j (\,\overline{\sigma^\bullet(t)}\,) = [\,\overline{\sigma^{\bullet}(t)}\,]_{\scr C_{\sigma(\cc E)}}.$$

We now show that, given $h$ algebraic probabilistic unifier for $\alg A = \alg F(\cc E, \cc P)$, $h: \alg F(\cc E, \cc P) \to \alg C$, $h \sim h_{\sigma_{h}} = {\sf A}({\sf S}(k))$ in the poset of unifiers $U_{\alg F(\cc E, \cc P)}$. That is to say, there are homomorphisms $k, k'$ such that $h = k \circ h_{\sigma_{h}}$ and $h_{\sigma_{h}} = k' \circ h$. Let $\cc I = \vv S(\alg A)$.  
Notice that, following the definitions, if $C \cong \free(\cc T)/ \scr C_{\cc T}$ via a map $\lambda$, then $h_{\sigma_{h}}: \alg F(\cc E, \cc P) \to  \free(\cc T)/ \scr C_{\cc T}$,  and specifically $h_{\sigma_{h}}([\,\overline{t}\,]_{\cc I}) = [s_h(\,\overline{t}\,)]_{\scr C_{\cc T}}$. 
Let us denote again as in the diagram before (\ref{eq:defsigmah}), $s_h = i \circ \lambda \circ h \circ \mu$, and $j$ such that $j \circ i = id$.

We show first that $h_{\sigma_{h}} = \lambda \circ h$. Indeed, for any  $[\,\overline{t}\,]_{\cc I} \in \alg F(\cc E, \cc P)$: 
$$
\lambda \circ h ([\,\overline{t}\,]_{\cc I}) = j \circ i \circ \lambda \circ h ([\,\overline{t}\,]_{\cc I}) = j \circ s_{h} (\,\overline{t}\,)= h_{\sigma_{h}}([\,\overline{t}\,]_{\cc I}).
$$
Thus, since $\lambda$ is an isomorphism, from $h_{\sigma_{h}} = \lambda \circ h$ it also follows that $h = \lambda^{-1} \circ h_{\sigma_{h}}$ and then the proof is completed.

\end{proof}
Given a probabilistic unification problem $\cc I$, we now see $\sf A$ as a map from $(U_{\cc I}, \leq)$
to $(U_{F(\cc E, \cc I^{\bullet}) }, \preceq)$.
\begin{lemma}\label{lemma:posetunif}
Given a probabilistic unification problem $\cc I$, $\sf A$ is surjective on $(U_{F(\cc E, \cc I^{\bullet}) }, \preceq)$ and it preserves the order: given $\sigma$ and $\rho$ probabilistic unifiers for $\cc I$, $\sigma \sqsubseteq \rho$ iff $h_{\sigma} \preceq h_{\rho}$.
\end{lemma}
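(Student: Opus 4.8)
The plan is to get surjectivity essentially for free from the round-trip identities of Lemma \ref{lemma:probcomp}, and to prove the order equivalence by a symmetric pair of constructions that pass through the encoding of Theorem \ref{thm:red1}. For surjectivity, given any algebraic unifier $h$ for ${\sf A}(\cc I)$, I would put $\sigma := {\sf S}(h) = \sigma_h$, which is a probabilistic unifier for ${\sf S}({\sf A}(\cc I)) = \cc I$ by Lemma \ref{lemma:symbunif}; Lemma \ref{lemma:probcomp} then gives $h \sim h_{\sigma_h} = {\sf A}(\sigma)$ in $U_{{\sf A}(\cc I)}$, so $h$ lies in the image of ${\sf A}$ and ${\sf A}$ is onto the poset.

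Before the order argument I would record the elementary fact that the translation $^\bullet$ is multiplicative, i.e. $(\delta \circ \rho)^\bullet = \delta^\bullet \circ \rho^\bullet$, which is immediate on generators from (T2)--(T3). For the forward implication $\sigma \sqsubseteq \rho \Rightarrow h_\sigma \preceq h_\rho$, assume $\sigma = \delta \circ \rho$ is provable in $\FPL$ with $\delta$ a probabilistic substitution. Mimicking the construction of Lemma \ref{lemma:alguniff}, I would define $p \colon \free(\rho(\cc E))/\mathscr{C}_{\rho(\cc E)} \to \free(\sigma(\cc E))/\mathscr{C}_{\sigma(\cc E)}$ by $p([\,\overline{t}\,]_{\mathscr{C}_{\rho(\cc E)}}) = [\,\overline{\delta^\bullet(t)}\,]_{\mathscr{C}_{\sigma(\cc E)}}$; this is a well-defined homomorphism exactly because $\delta$ is a probabilistic substitution (Theorem \ref{thm:red1} turns provable equivalences into equalities of classes modulo the coherent sets). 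Computing on a class $[\,\overline{t}\,]_{\cc I}$ one gets $p(h_\rho([\,\overline{t}\,]_{\cc I})) = [\,\overline{\delta^\bullet \rho^\bullet(t)}\,]_{\mathscr{C}_{\sigma(\cc E)}} = [\,\overline{(\delta \circ \rho)^\bullet(t)}\,]_{\mathscr{C}_{\sigma(\cc E)}} = [\,\overline{\sigma^\bullet(t)}\,]_{\mathscr{C}_{\sigma(\cc E)}} = h_\sigma([\,\overline{t}\,]_{\cc I})$, the penultimate step using $\sigma = \delta\circ\rho$ together with Theorem \ref{thm:red1}; hence $p \circ h_\rho = h_\sigma$ and $h_\sigma \preceq h_\rho$.

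For the reverse implication $h_\sigma \preceq h_\rho \Rightarrow \sigma \sqsubseteq \rho$, assume $p \circ h_\rho = h_\sigma$ for a homomorphism $p$ between the same two coherent algebras. Writing $\rho(\cc E) = \{\gamma'_1,\dots,\gamma'_l\}$ and $\sigma(\cc E) = \{\gamma_1,\dots,\gamma_t\}$, I would read a substitution off $p$: choose for each $j$ a term $s_j$ over $p_{\gamma_1},\dots,p_{\gamma_t}$ with $p([\,p_{\gamma'_j}\,]_{\mathscr{C}_{\rho(\cc E)}}) = [\,\overline{s_j}\,]_{\mathscr{C}_{\sigma(\cc E)}}$ and set $\delta(P(\gamma'_j)) = s_j[P(\gamma_1),\dots,P(\gamma_t)]$. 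Since $p$ is a homomorphism that agrees with $[\,\overline{t}\,] \mapsto [\,\overline{\delta^\bullet(t)}\,]$ on the generators, the two maps coincide on all of $\free(\rho(\cc E))/\mathscr{C}_{\rho(\cc E)}$; this identity, read through Theorem \ref{thm:red1}, shows that $\delta$ sends provably equivalent formulas to provably equivalent ones, i.e. $\delta$ is a probabilistic substitution. Finally, evaluating $p \circ h_\rho = h_\sigma$ on each generator $[\,p_{\varphi_i}\,]_{\cc I}$ yields $[\,\overline{(\delta\circ\rho)^\bullet(p_{\varphi_i})}\,]_{\mathscr{C}_{\sigma(\cc E)}} = [\,\overline{\sigma^\bullet(p_{\varphi_i})}\,]_{\mathscr{C}_{\sigma(\cc E)}}$, which by Theorem \ref{thm:red1} is precisely the provability in $\FPL$ of $\sigma = \delta \circ \rho$ in the sense of (\ref{eq:composition}); hence $\sigma \sqsubseteq \rho$.

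I expect the reverse implication to be the main obstacle, since there I must manufacture a genuine probabilistic substitution out of an abstract homomorphism $p$. The choice of representatives $s_j$ is non-canonical, so independence of this choice must be checked (Proposition \ref{propSubstitution2} absorbs the freedom in the outer term), and, crucially, the probabilistic-substitution condition of Definition \ref{defProbSub} must be verified; this is exactly the point at which the coherence constraint could a priori fail, but it is kept in check by the fact that $p$ takes values in the coherent algebra $\free(\sigma(\cc E))/\mathscr{C}_{\sigma(\cc E)}$ and by Theorem \ref{thm:red1}. The forward direction and surjectivity are then routine bookkeeping with the multiplicativity of $^\bullet$ and the defining formulas for $h_\sigma$ and $h_\rho$.
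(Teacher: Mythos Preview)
Your proof is correct and follows essentially the same strategy as the paper. Surjectivity is handled identically via Lemma~\ref{lemma:probcomp}, and in the forward direction your map $p$ is exactly the paper's $h_\delta$: the paper obtains it by regarding $\free(\rho(\cc E))/\scr C_{\rho(\cc E)}$ as the trivial algebraic problem $\alg F(\rho(\cc E),\cc P_\top)$ and applying ${\sf A}$ to $\delta$, whereas you write down the same homomorphism directly.

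The only noteworthy divergence is in the reverse direction. The paper applies the ${\sf S}$-machinery to the homomorphism $k$ (your $p$), which uses the projectivity embedding $i:\free(\sigma(\cc E))/\scr C_{\sigma(\cc E)}\hookrightarrow\free(\sigma(\cc E))$ to produce $\sigma_k$; it then shows $\sigma_k\circ\sigma_{h_\rho}=\sigma_{h_\sigma}$ and appeals to Lemma~\ref{lemma:probcomp} to transport this back to $\sigma\sqsubseteq\rho$. You instead read off $\delta$ by choosing arbitrary term representatives for $p([p_{\gamma'_j}])$, verify the probabilistic-substitution property by the freeness argument (the map $\overline t\mapsto[\overline{\delta^\bullet(t)}]$ agrees with $p\circ\mu$ on generators, hence everywhere, so it factors through $\scr C_{\rho(\cc E)}$ and Theorem~\ref{thm:red1} applies), and conclude $\sigma=\delta\circ\rho$ directly on the generators $P(\varphi_i)$. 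Your route is slightly more elementary in that it avoids both the projectivity lift and the detour through $\sigma_{h_\sigma},\sigma_{h_\rho}$; the paper's route has the virtue of reusing the ${\sf S}$-construction already built in Lemma~\ref{lemma:symbunif}, which makes the argument more uniform with the rest of the section. Both arguments are sound and amount to the same underlying correspondence.
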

\begin{proof}
The fact that  $\sf A$ is surjective on $(U_{F(\cc E, \cc I^{\bullet}) }, \preceq)$ follows from Lemma \ref{lemma:probcomp}, indeed given an algebraic unifier $h$ unifier for $\alg F(\cc E, \cc I^{\bullet})$, considering $\sigma_{h}$ unifier for ${\sf S}(\alg F(\cc E, \cc I^\bullet))$, we get that ${\sf A}(\sigma_{h}) = h_{\sigma_{h}}$ coincides with $h$ in the poset $(U_{F(\cc E, \cc I^{\bullet}) }, \preceq)$.

Suppose now that $\sigma \sqsubseteq \rho$, with $Var(\{\sigma(P(\varphi_1)),\ldots, \sigma(P(\varphi_k))\})=\{\tau_{1}, \ldots \tau_{l}\}=\mathcal{T}$ 
and $Var(\{\rho(P(\varphi_1)),\ldots, \rho(P(\varphi_k))\})=\{\gamma_{1}, \ldots \gamma_{m}\}=\mathcal{G}$. 
Then there exists a probabilistic substitution $\delta$ with $Var(\{\delta(P(\gamma_1)),\ldots, \delta(P(\gamma_m))\})=\mathcal{T}$ 
such that $\sigma = \delta \circ \rho$ in the sense of (\ref{eq:composition}). Let us consider the coherent MV-algebra $\free(\rho(\cc E))/ \scr C_{\rho(\cc E)}$. We can see this as the algebraic unification problem $\alg F(\rho(\cc E), \cc P_{\top})$, where $\cc P_{\top}= \{p_{\gamma_{i}} = p_{\gamma_{i}} \mid i = 1, \ldots, m\}$. Indeed, since  $\scr P_{\cc P_{\top}} = [0,1]^{m}$, one has that $$\alg F(\rho(\cc E), \cc P_{\top}) = \free(\rho(\cc E))/ (\scr C_{\rho(\cc E)} \cap \scr P_{\cc P_{\top}})= \free(\rho(\cc E))/ \scr C_{\rho(\cc E)}.$$
Thus, let us consider ${\sf S}(\alg F(\rho(\cc E), \cc P_{\top}))$, that is to say, $\{P(\gamma_{i}) = P(\gamma_{i}) : i = 1 \ldots m \}$.
Therefore, $\delta$ is a probabilistic unifier for ${\sf S}(\alg F(\rho(\cc E), \cc P_{\top}))$, since it is a probabilistic substitution and (\ref{eq:defprobunifier}) is clearly satisfied. We can then define ${\sf A}(\delta) = h_{\delta}: \free(\rho(\cc E))/ \scr C_{\rho(\cc E)} \to \free(\sigma(\cc E))/ \scr C_{\sigma(\cc E)}$, and show that $h_{\sigma} = h_{\delta} \circ h_{\rho}$, which will imply $h_{\sigma} \preceq h_{\rho}$. This holds since for all terms $[\,\overline{t}\,]_{\cc I^{\bullet}} \in {\sf A}(\cc I)$: 
$$
h_{\delta} \circ h_{\rho}([\,\overline{t}\,]_{\cc I^{\bullet}}) = h_{\delta}[\,\overline{\rho^{\bullet}(t)}\,]_{\scr C_{\cc G}} = [\,\overline{\delta^{\bullet} \circ \rho^{\bullet} (t)}\,]_{\scr C_{\cc T}} = [\,\overline{\sigma^{\bullet}(t)}\,]_{\scr C_{\cc T}} = h_{\sigma}([\, \overline{t}\,]_{\cc I^{\bullet}})
$$ 
via the fact that $\sigma = \delta \circ \rho$ in the sense of (\ref{eq:composition}), and Theorem \ref{thm:red1}.

It is now left to prove that if $h_{\sigma} \preceq h_{\rho}$, then $\sigma \sqsubseteq \rho$. Suppose then $h_{\sigma} \preceq h_{\rho}$, i.e. there is a homomorphism $k: \free(\rho(\cc E))/ \scr C_{\rho(\cc E)} \to \free(\sigma(\cc E))/ \scr C_{\sigma(\cc E)}$ such that $h_{\sigma} = k \circ h_{\rho}$. Via the same comment as above, the coherent MV-algebra $\free(\rho(\cc E))/ \scr C_{\rho(\cc E)}$ is the algebraic unification problem $\alg F(\rho(\cc E), \cc P_{\top})$, thus we can consider the probabilistic unifier $\sigma_{k}$ for ${\sf S}(\alg F(\rho(\cc E), \cc P_{\top}))$. We prove that $\sigma_{k} \circ \sigma_{h_{\rho}} = \sigma_{h_{\sigma}}$, which means that $\sigma_{h_{\sigma}} \sqsubseteq \sigma_{h_{\rho}}$, that via Lemma \ref{lemma:probcomp} implies $\sigma \sqsubseteq \rho$. Showing that $\sigma_{k} \circ \sigma_{h_{\rho}} = \sigma_{h_{\sigma}}$ means showing that 
$$
\vdash_{FP} t[\sigma_{h_{\sigma}}(P(\varphi_{1})), \ldots, \sigma_{h_{\sigma}}(P(\varphi_{k}))] \leftrightarrow t[(\sigma_{k} \circ \sigma_{h_{\rho}})(P(\varphi_{1})), \ldots,(\sigma_{k} \circ \sigma_{h_{\rho}})(P(\varphi_{k}))].
$$
This is equivalent to saying that $[\,\overline{\sigma_{h_{\sigma}}^{\bullet}(t)}\,]_{\scr C_{\sigma(\cc E)}} = [\,\overline{\sigma_{k}^{\bullet} \circ \sigma^{\bullet}_{h_{\rho}}(t)}\,]_{\scr C_{\sigma(\cc E)}}$, that is, 
$$
[s_{h_{\sigma}}(\,\overline{t}\,)]_{\scr C_{\sigma(\cc E)}} = [s_{k} \circ s_{h_{\rho}}(\,\overline{t}\,)]_{\scr C_{\sigma(\cc E)}}.
$$ 
In order to prove the latter identity, as clarified by the following diagram, we call $j_{\sigma}$ and $j_{\rho}$ the natural epimorphisms going, respectively, from $ \free(\sigma(\cc E))$ to $ \free(\sigma(\cc E))/\scr C_{\sigma(\cc E)}$ and from $ \free(\rho(\cc E))$ to $ \free(\sigma(\cc E))/\scr C_{\rho(\cc E)}$. Furthermore, let $i_{\sigma} $ and $i_{\rho}$ be the maps (given by the projectivity of the algebras) such that $j_{\sigma} \circ i_{\sigma} = id_{ \free(\sigma(\cc E))/\scr C_{\sigma(\cc E)}}$ and $j_{\rho} \circ i_{\rho} = id_{ \free(\rho(\cc E))/\scr C_{\rho(\cc E)}}.$
$$
\xymatrix{
\free(\cc E) \ar[r]^{\mu} \ar[rdd]_{s_{h_{\rho}}} \ar@/^3.0pc/[rrr]^{s_{h_{\sigma}}}& {\bf F}(\cc E, \cc I^{\bullet}) \ar[r]^-{h_{\sigma}} \ar[d]_{h_{\rho}}& \free(\sigma(\cc E))/\scr C_{\sigma(\cc E)} \ar@/^/[r]^-{i_{\sigma}} & \free(\sigma(\cc E)) \ar@/^/[l]^-{j_{\sigma}}\\
& \free(\rho(\cc E))/\scr C_{\rho(\cc E)} \ar[ur]_{k} \ar@/^/[d]^{i_{\rho}}\\
& \free(\rho(\cc E)) \ar@/^/[u]^{j_{\rho}} \ar[uurr]_{s_{k}}
}
$$
Then, we get that
$$[s_{h_{\sigma}}(\,\overline{t}\,)]_{\scr C_{\sigma(\cc E)}} = j_{\sigma} \circ i_{\sigma} \circ h_{\sigma} \circ \mu (\,\overline{t}\,) = h_{\sigma} \circ \mu (\,\overline{t}\,)$$
and 
$$[s_{k} \circ s_{h_{\rho}}(\,\overline{t}\,)]_{\scr C_{\sigma(\cc E)}}  = j_{\sigma} \circ s_{k} \circ i_{\rho} \circ h_{\rho} \circ \mu (\,\overline{t}\,) = j_{\sigma} \circ i_{\sigma} \circ k \circ j_{\rho} \circ  i_{\rho} \circ h_{\rho} \circ \mu (\,\overline{t}\,) = k \circ h_{\rho} \circ \mu (\,\overline{t}\,).$$
Thus, since by hypothesis $k \circ h_{\rho} = h_{\sigma}$, we have showed that $\sigma_{k} \circ \sigma_{h_{\rho}} = \sigma_{h_{\sigma}}$, that is $\sigma_{h_{\sigma}} \sqsubseteq \sigma_{h_{\rho}}$. Therefore, $\sigma \sqsubseteq \rho$ and the proof is completed. 
\end{proof}
The following result then follows. 
\begin{theorem}\label{thm:probunifghilardi}
Given a (symbolic) probabilistic unification problem $\cc I$ for $\FPL$, there exists an algebraic probabilistic unification problem ${\bf F}(\cc E, \cc P)$ that has a solution or unifier iff $\cc I$ does. Moreover, the respective posets of unifiers are isomorphic.
\end{theorem}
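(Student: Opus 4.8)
The plan is to let $\cc E=\{\varphi_1,\ldots,\varphi_k\}$ be the set of events occurring in $\cc I$ and take the witnessing algebraic problem to be ${\sf A}(\cc I)={\bf F}(\cc E,\cc I^{\bullet})=\free(\cc E)/(\scr C_{\cc E}\cap\scr P_{\cc I^{\bullet}})$. With this choice everything reduces to assembling the four preceding lemmas, so essentially no new computation is required: the statement is a corollary of the constructions $\sf A$ and $\sf S$ and their properties.

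First I would settle the biconditional on solvability. For the forward direction, if $\sigma$ is a probabilistic unifier of $\cc I$, then Lemma \ref{lemma:alguniff} guarantees that ${\sf A}(\sigma)=h_\sigma$ is an algebraic unifier of ${\sf A}(\cc I)$; hence if $\cc I$ is unifiable, so is ${\sf A}(\cc I)$. For the converse, suppose $h$ is an algebraic unifier of ${\sf A}(\cc I)$. By Lemma \ref{lemma:symbunif}, ${\sf S}(h)=\sigma_h$ is a probabilistic unifier of ${\sf S}({\sf A}(\cc I))$, and by the first clause of Lemma \ref{lemma:probcomp} we have ${\sf S}({\sf A}(\cc I))=\cc I$, so $\sigma_h$ unifies $\cc I$ itself. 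This establishes the first assertion of the theorem.

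For the ``moreover'' part I would exhibit $\sf A$ as an order isomorphism between $\scr S_{\cc I}$ and the poset $\scr A_{{\sf A}(\cc I)}$ of algebraic unifiers of ${\sf A}(\cc I)$. Both are quotients of the respective preorders of unifiers modulo equal generality, and Lemma \ref{lemma:posetunif} delivers the crucial equivalence $\sigma\sqsubseteq\rho$ iff $h_\sigma\preceq h_\rho$. This single biconditional does most of the work: read in both directions it yields $\sigma\sim\rho$ iff $h_\sigma\sim h_\rho$, which simultaneously shows that the assignment $[\sigma]\mapsto[h_\sigma]$ descends to a \emph{well-defined} and \emph{injective} map on equal-generality classes, and that this map is both order preserving and order reflecting. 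Surjectivity onto $\scr A_{{\sf A}(\cc I)}$ is the remaining clause of Lemma \ref{lemma:posetunif}: for any algebraic unifier $h$ one has $h\sim h_{\sigma_h}={\sf A}({\sf S}(h))$ by Lemma \ref{lemma:probcomp}. Combining these, $\sf A$ is a bijective, order-preserving, order-reflecting map between the two posets, i.e.\ a poset isomorphism, which is exactly the second assertion.

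Since all substantive steps are discharged by Lemmas \ref{lemma:alguniff}, \ref{lemma:symbunif}, \ref{lemma:probcomp}, and \ref{lemma:posetunif}, I do not expect any genuine obstacle. The only point requiring mild care is the bookkeeping at the level of the quotient posets: one must check that the generality preorders $\sqsubseteq$ and $\preceq$ descend to $\sim$-classes and that $\sf A$ respects this passage. This, however, is immediate from the ``iff'' in Lemma \ref{lemma:posetunif} together with the convention, adopted earlier, of identifying a preorder on unifiers with the partial order it induces on the quotient.
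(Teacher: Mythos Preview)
Your proposal is correct and follows essentially the same approach as the paper: take ${\sf A}(\cc I)={\bf F}(\cc E,\cc I^{\bullet})$ as the algebraic problem and invoke the preceding lemmas. The paper's proof is a one-liner citing Lemma \ref{lemma:posetunif}; you simply unpack in detail how surjectivity together with the order biconditional of that lemma yields a well-defined, injective, order-preserving and order-reflecting bijection on $\sim$-classes, and how Lemmas \ref{lemma:alguniff}, \ref{lemma:symbunif}, \ref{lemma:probcomp} handle the solvability biconditional.
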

\begin{proof}
By Lemma \ref{lemma:posetunif}, it suffices to consider ${\sf A}(\cc I)$ as the algebraic probabilistic unification problem.
\end{proof}
We recall that by \emph{unification type} for a logic, or for a variety of algebras, we mean the worst unification type occurring in either a symbolic or algebraic problem. Therefore, we can also obtain the following result about probabilistic unification for $\FPL$.
\begin{theorem}\label{thm:unificationtype}
The symbolic and algebraic unification types for $\FPL$ coincide.
\end{theorem}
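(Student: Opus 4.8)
The plan is to obtain this as an immediate consequence of the correspondence established in Theorem \ref{thm:probunifghilardi}, together with Lemma \ref{lemma:probcomp} and Lemma \ref{lemma:posetunif}. The crucial observation is that the four unification types (unitary, finitary, infinitary, nullary) are \emph{poset invariants}: each is defined solely in terms of the existence and cardinality of maximal elements of the poset of unifiers, and hence any two order-isomorphic posets of unifiers share the same type. Thus it suffices to exhibit, in both directions, a type-preserving correspondence between symbolic and algebraic probabilistic unification problems.

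First I would recall that, by definition, the unification type of $\FPL$ (whether computed symbolically or algebraically) is the \emph{worst} type attained by any of its unification problems, with respect to the usual ordering unitary $<$ finitary $<$ infinitary $<$ nullary. Hence the statement reduces to showing that the family of types realized by symbolic problems coincides with the family realized by algebraic problems. Given a symbolic problem $\cc I$, Theorem \ref{thm:probunifghilardi} (resting on Lemma \ref{lemma:posetunif}) provides the algebraic problem ${\sf A}(\cc I) = {\bf F}(\cc E, \cc I^{\bullet})$ whose poset of unifiers $\mathscr{A}_{{\sf A}(\cc I)}$ is order-isomorphic to the poset $\mathscr{S}_{\cc I}$ of symbolic unifiers. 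By the poset invariance just noted, $\cc I$ and ${\sf A}(\cc I)$ share the same type, so every type occurring among symbolic problems also occurs among algebraic ones.

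For the converse inclusion I would invoke that ${\sf A}$ and ${\sf S}$ are mutually inverse on problems: by Lemma \ref{lemma:probcomp}, ${\sf A}({\sf S}({\bf F}(\cc E, \cc P))) = {\bf F}(\cc E, \cc P)$ for every algebraic problem. Therefore an arbitrary ${\bf F}(\cc E, \cc P)$ is of the form ${\sf A}(\cc I)$ for the symbolic problem $\cc I = {\sf S}({\bf F}(\cc E, \cc P))$, and applying Theorem \ref{thm:probunifghilardi} to this $\cc I$ again yields an order isomorphism $\mathscr{S}_{\cc I} \cong \mathscr{A}_{{\bf F}(\cc E, \cc P)}$. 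Hence every type occurring among algebraic problems also occurs among symbolic ones. Combining the two inclusions, the two families of realized types agree, so their suprema agree, which is exactly the assertion that the symbolic and algebraic unification types for $\FPL$ coincide.

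I do not expect a genuine obstacle here, since the substantial work has already been carried out in Lemmas \ref{lemma:probcomp} and \ref{lemma:posetunif}; the only point requiring care is the explicit verification that each of the four unification types is invariant under order isomorphism of the associated posets of unifiers, so that the established poset isomorphisms transport types faithfully in both directions.
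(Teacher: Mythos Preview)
Your proposal is correct and follows essentially the same approach as the paper: both directions use the maps ${\sf A}$ and ${\sf S}$ together with Lemma \ref{lemma:probcomp} (to identify ${\sf A}({\sf S}({\bf F}(\cc E, \cc P)))$ with ${\bf F}(\cc E, \cc P)$) and Lemma \ref{lemma:posetunif}/Theorem \ref{thm:probunifghilardi} (to obtain the poset isomorphism of unifiers). Your explicit remark that the four unification types are poset invariants is a helpful clarification that the paper leaves implicit, but the underlying argument is the same.
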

\begin{proof}
Given a symbolic problem, we find an algebraic problem with the same unification type and vice versa. Indeed, as stated in Theorem \ref{thm:probunifghilardi}, given a probabilistic unification problem $\cc I$ for $\FPL$, there exists an algebraic problem with the same unification type. This is more precisely ${\sf A}(\cc I)$, via Lemma \ref{lemma:posetunif}.

Conversely, let us consider an algebraic probabilistic unification problem, ${\bf F}(\cc E, \cc P)$. Then ${\sf S}( {\bf F}(\cc E, \cc P))$ has the same unification type since, by Lemma \ref{lemma:posetunif}, ${\sf S}( {\bf F}(\cc E, \cc P))$ and ${\sf A}({\sf S}( {\bf F}(\cc E, \cc P)))$ have the same unification type, and moreover ${\sf A}({\sf S}( {\bf F}(\cc E, \cc P))) =  {\bf F}(\cc E, \cc P)$ by Lemma \ref{lemma:probcomp}.
\end{proof}
\subsection{The probabilistic unification type of $\FPL$ is nullary}
In \cite{MS13} the authors adopt Ghilardi's algebraic approach to unification and the geometric description of finitely presented MV-algebras to provide an example showing  the unification problem for \luk\ logic to be of nullary type. Since  $\FPL$ builds on \luk\ logic, if from one side one may expect probabilistic unification problems to be at least as complex as the \luk\ one, from the other, our version of Ghilardi's theorem (namely, Theorems \ref{thm:probunifghilardi} and  \ref{thm:unificationtype}) shows that not all (propositional) unifiers are probabilistic unifiers. Indeed, coherent MV-algebras form a proper subclass of projective ones. Therefore, the worst case scenario depicted in \cite{MS13} does not directly apply here.

Nonetheless, we are going to prove that such pathological example can be adapted to our case and that the probabilistic unification type for the logic $\FPL$ is nullary. 

Let us hence start with a set of two events given by propositional variables $\mathcal{E}=\{x_{1},x_2\}$ and considering the probabilistic unification problem consisting of the single identity:
$$
\mathcal{I}=\{P(x_1)\vee \neg P(x_1)\vee P(x_2)\vee \neg P(x_2)=\top\}
$$ 
Notice that via Lemma \ref{propNumVariables}, this can be equivalently rewritten as a problem where the identities are over the same set of variables, such as 
$\cc I = \{P(x_1)\vee \neg P(x_1)\vee P(x_2)\vee \neg P(x_2) = (P(x_{1}) \to P(x_{1})) \land (P(x_{2}) \to P(x_{2}))\}$.
Moreover, notice that ${\sf A}(\cc I) = {\bf F}(\cc E, \cc I^{\bullet})$ where $\mathcal{I}^\bullet=\{P(x_{1})^\bullet\vee \neg P(x_{1})^\bullet\vee P(x_{2})^\bullet\vee \neg P(x_{2})^\bullet=\top^\bullet\}=\{p_{x_{1}}\vee \neg p_{x_{1}}\vee p_{x_{2}}\vee \neg p_{x_{2}}=\top\}$. This immediately gives the pathological example of \cite{MS13} on propositional variables $\{p_{x_{1}}, p_{x_{2}}\}$. Indeed, in this case the coherence set $\scr C_{\cc E} = [0,1]^{2}$, while $\scr P_{\mathcal{I}^{\bullet}}$ is the border $\mathscr{B}$ of the unit square of $\mathbb{R}^2$ and then 
$$
{\sf A}(\cc I) = \free(\cc E) / ([0,1]^2\cap \scr P_{\mathcal{I}^{\bullet}} )=\free(\cc E) / \scr P_{\mathcal{I}^{\bullet}}.
$$ 
However, the same proof of \cite{MS13} does not directly apply to our case and, in order to exhibit that $\mathcal{I}$ has nullary unification type for $\FPL$, we need to adapt their construction to our case.  Let us first briefly recall the key steps of the construction in \cite{MS13}. The authors define a family of polyhedra $\mathscr{T}_1, \mathscr{T}_2, \ldots$ in $\mathbb{R}^3$ and indexed in $\mathbb{Z}^+$, which is an increasing sequence of squared spirals, each projecting onto the border of the  square $\mathscr{B}$ (see \cite[Fig. 1]{MS13}). Then they  show that each $\mathscr{T}_i$ is $\mathbb{Z}$-homeomorphic to a rational polyhedron $\mathscr{P}_i\subseteq [0,1]^{n_i}$, from some $n_i\in \mathbb{N}$. From the algebraic perspective, each $\mathscr{P}_i$ corresponds to a projective MV-algebra $\free(n_i)/\mathscr{P}_i$, that is the codomain of an algebraic unifier belonging to the $\omega$-chain that gives the pathological example.

The following rephrases the key Lemmas 6.1 and 6.2 from \cite{MS13} in algebraic terms.

\begin{lemma}\label{lemmaRephrasesGlue}
For all $i\in \mathbb{Z}^+$, there exist homomorphisms $h_i: \free(\cc E) / \scr P_{\mathcal{I}^{\bullet}}\to \free(n_i)/\mathscr{P}_i$, and $p_{i+1}:\free(n_{i+1})/\mathscr{P}_{i+1}\to \free(n_i)/\mathscr{P}_i$ such that: 
\begin{enumerate}
\item[(1)] $p_{i+1}\circ h_{i+1}=h_i$;
\item[(2)] For all $i>j$, there is no homomorphism $k: \free(n_j)/\mathscr{P}_j\to \free(n_i)/\mathscr{P}_i$ such that $k\circ h_j= h_i$;
\item[(3)] Let ${\bf P}$ be a projective MV-algebra and let $g: \free(\cc E) / \scr P_{\mathcal{I}^{\bullet}}\to {\bf P}$ be a homomorphism. Then, there exists an index $i_0$ and a homomorphism $g':\free(n_{i_0})/\mathscr{P}_{i_0}\to {\bf P}$ such that $g=g'\circ h_{i_0}$.
\end{enumerate}

\end{lemma}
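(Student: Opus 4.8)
The plan is to obtain all three statements by dualizing, through the categorical duality of \cite{MS13}, the geometric construction of the squared spirals recalled just above. Recall that the contravariant functor $\mathscr{M}$ of \cite[\S3.2]{MS13} sends each rational polyhedron $\mathscr{Q}\subseteq[0,1]^m$ to the finitely presented MV-algebra $\free(m)/\mathscr{Q}$ and each $\mathbb{Z}$-map $\tau:\mathscr{Q}\to\mathscr{R}$ to a homomorphism $\mathscr{M}(\tau)$ reversing the direction of the arrow. Since here $\mathscr{C}_{\mathcal{E}}=[0,1]^2$, the polyhedron $\mathscr{P}_{\mathcal{I}^\bullet}$ is exactly the border $\mathscr{B}$ of the unit square, so that ${\sf A}(\cc I)=\free(\cc E)/\mathscr{B}$ is precisely the finitely presented algebra dual to $\mathscr{B}$; the geometric situation is therefore literally the one treated in \cite{MS13}, and the task is to read off the three clauses from the corresponding geometric facts.

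First I would fix, for every $i$, the geometric data supplied by \cite{MS13}: the $\mathbb{Z}$-homeomorphism identifying the spiral $\mathscr{T}_i$ with $\mathscr{P}_i\subseteq[0,1]^{n_i}$, the projection $\mathbb{Z}$-map $\pi_i:\mathscr{P}_i\to\mathscr{B}$, and the gluing $\mathbb{Z}$-maps $\ell_{i+1}:\mathscr{P}_i\to\mathscr{P}_{i+1}$ between consecutive spirals. Applying $\mathscr{M}$ I would set $h_i=\mathscr{M}(\pi_i):\free(\cc E)/\mathscr{B}\to\free(n_i)/\mathscr{P}_i$ and $p_{i+1}=\mathscr{M}(\ell_{i+1}):\free(n_{i+1})/\mathscr{P}_{i+1}\to\free(n_i)/\mathscr{P}_i$; these are genuine homomorphisms by \cite[Lemma 3.3]{MS13}. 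Clause (1) is then just the image under the contravariant $\mathscr{M}$ of the geometric identity $\pi_{i+1}\circ\ell_{i+1}=\pi_i$ (the projection of a spiral coincides with the projection obtained after gluing it into the next one), which is the content of \cite[Lemma 6.1]{MS13}.

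For clause (2), a homomorphism $k$ with $k\circ h_j=h_i$ would, through the duality (which is an equivalence, hence both full and faithful), correspond to a $\mathbb{Z}$-map $\kappa:\mathscr{P}_i\to\mathscr{P}_j$ satisfying $\pi_j\circ\kappa=\pi_i$. The content of \cite[Lemma 6.2]{MS13} is precisely that no such $\kappa$ can exist when $i>j$, because the $i$-th spiral winds strictly more than the $j$-th one and cannot be unwound by any $\mathbb{Z}$-map compatible with the projections onto $\mathscr{B}$. This non-factorization is the heart of the whole argument, and it is the step I expect to be the main obstacle; for us, however, it is entirely inherited from \cite{MS13} once the duality dictionary is in place, so no new geometry is needed.

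Finally, for clause (3) I would invoke Theorem \ref{thm:CabMund}(1): a projective MV-algebra $\mathbf{P}$ is isomorphic to $\free(m)/\mathscr{Q}$ for some $\mathbb{Z}$-retract $\mathscr{Q}$ of $[0,1]^m$, so the homomorphism $g$ dualizes to a $\mathbb{Z}$-map $\gamma:\mathscr{Q}\to\mathscr{B}$. The cofinality part of \cite[Lemma 6.2]{MS13} supplies an index $i_0$ together with a $\mathbb{Z}$-map $\gamma':\mathscr{Q}\to\mathscr{P}_{i_0}$ with $\pi_{i_0}\circ\gamma'=\gamma$. Setting $g'=\mathscr{M}(\gamma'):\free(n_{i_0})/\mathscr{P}_{i_0}\to\mathbf{P}$ and applying the contravariant $\mathscr{M}$ to this factorization yields $g=g'\circ h_{i_0}$, as required. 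Thus the entire lemma reduces, clause by clause, to the already established geometric statements of \cite{MS13} transported across the duality.
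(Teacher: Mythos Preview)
Your proposal is correct and follows essentially the same approach as the paper: the paper presents this lemma without proof, simply stating that it ``rephrases the key Lemmas 6.1 and 6.2 from \cite{MS13} in algebraic terms,'' and your argument spells out exactly how that rephrasing works by transporting the geometric statements across the duality functor $\mathscr{M}$. If anything, your write-up is more explicit than the paper's, since it identifies the specific $\mathbb{Z}$-maps being dualized and names precisely which part of \cite{MS13} yields each clause.
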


In order to show that the probabilistic unification problem $\mathcal{I}$ has nullary type, we  proceed as follows. For every $i=1,2,\ldots$, let us denote by $\overline{\mathscr{P}_i}$ the rational polytope generated by $\mathscr{P}_i$ (its convex closure). Each $\overline{\mathscr{P}_i}$ is convex and it contains a Boolean point of $[0,1]^{n_i}$ because, in fact, each $\mathscr{P}_i$ already contains a Boolean point of the same cube by construction. Thus, by Theorem \ref{propZRet} and Definition \ref{defCoheMV}, each $\free(n_i)/\overline{\mathscr{P}_i}$ is a coherent MV-algebra. 

The next lemma, in which we will adopt the notation just introduced, gives us some useful hints on the relation between the projective MV-algebras $\free(n_i)/\mathscr{P}_i$ and $\free(n_i)/\overline{\mathscr{P}_i}$. 
\begin{lemma}\label{lemmaNullary1}
For all $i=1,2,\ldots$, $\mathscr{P}_i$ and $\overline{\mathscr{P}_i}$ are $\mathbb{Z}$-retracts of $[0,1]^{n_i}$ and there are homomorphisms $j_i: \free(n_i)/\mathscr{P}_i\to\free(n_i)/\overline{\mathscr{P}_i}$ and $k_i:\free(n_i)/\overline{\mathscr{P}_i}\to \free(n_i)/\mathscr{P}_i$ such that $j_i$ is injective, $k_i$ is surjective, and $k_i\circ j_i$ is the identity map on $\free(n_i)/\mathscr{P}_i$. 
\end{lemma}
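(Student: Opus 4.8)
The plan is to split the claim into two independent parts: first, that both $\mathscr{P}_i$ and $\overline{\mathscr{P}_i}$ are $\mathbb{Z}$-retracts of $[0,1]^{n_i}$, and second, that $\free(n_i)/\mathscr{P}_i$ is a retract of $\free(n_i)/\overline{\mathscr{P}_i}$ realized by the pair $(j_i,k_i)$. For the first part, by the Marra--Spada construction each $\free(n_i)/\mathscr{P}_i$ is projective, so Theorem \ref{thm:CabMund}(1) gives at once that $\mathscr{P}_i$ is a $\mathbb{Z}$-retract of $[0,1]^{n_i}$. For $\overline{\mathscr{P}_i}$, recall that (as observed just before the statement) it is a convex rational polyhedron containing a Boolean point $p\in\{0,1\}^{n_i}$; being convex it is star-shaped with pole $p$, so Theorem \ref{thm:CabMund}(2) yields that $\free(n_i)/\overline{\mathscr{P}_i}$ is projective, and since $\overline{\mathscr{P}_i}$ is convex, Theorem \ref{propZRet} ((3)$\Rightarrow$(2)) makes it a $\mathbb{Z}$-retract of $[0,1]^{n_i}$ as well.

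Next I would produce a \emph{geometric} retraction of $\overline{\mathscr{P}_i}$ onto $\mathscr{P}_i$. Fix a $\mathbb{Z}$-retraction $\eta\colon[0,1]^{n_i}\to\mathscr{P}_i$ witnessing the first point, so that $\eta\circ\eta=\eta$ and in particular $\eta$ fixes $\mathscr{P}_i$ pointwise. Since $\mathscr{P}_i\subseteq\overline{\mathscr{P}_i}\subseteq[0,1]^{n_i}$, the restriction $\rho:=\eta\upharpoonright_{\overline{\mathscr{P}_i}}\colon\overline{\mathscr{P}_i}\to\mathscr{P}_i$ is again a $\mathbb{Z}$-map (a restriction to a subpolyhedron of a map given by McNaughton functions is still given by McNaughton functions), its image is exactly $\mathscr{P}_i$, and it satisfies $\rho\circ\iota=\mathrm{id}_{\mathscr{P}_i}$, where $\iota\colon\mathscr{P}_i\hookrightarrow\overline{\mathscr{P}_i}$ is the inclusion; indeed $\rho\upharpoonright_{\mathscr{P}_i}=\eta\upharpoonright_{\mathscr{P}_i}=\mathrm{id}$. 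Thus $\iota$ is an injective $\mathbb{Z}$-map and $\rho$ is a surjective $\mathbb{Z}$-map which retracts $\overline{\mathscr{P}_i}$ onto $\mathscr{P}_i$.

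Finally I would pass to MV-algebras through the representations $\free(n_i)/\mathscr{P}_i\cong\mathcal{M}(\mathscr{P}_i)$ and $\free(n_i)/\overline{\mathscr{P}_i}\cong\mathcal{M}(\overline{\mathscr{P}_i})$. Define $k_i\colon\mathcal{M}(\overline{\mathscr{P}_i})\to\mathcal{M}(\mathscr{P}_i)$ by $f\mapsto f\upharpoonright_{\mathscr{P}_i}$ (the dual of $\iota$) and $j_i\colon\mathcal{M}(\mathscr{P}_i)\to\mathcal{M}(\overline{\mathscr{P}_i})$ by $g\mapsto g\circ\rho$ (the dual of $\rho$). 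Both are MV-homomorphisms, since the operations are pointwise and $\rho$ is a $\mathbb{Z}$-map, so $g\circ\rho$ is again (the restriction of) a McNaughton function; moreover $k_i$ is surjective because every element of $\mathcal{M}(\mathscr{P}_i)$ is the restriction of a global McNaughton function, which then restricts from $\overline{\mathscr{P}_i}$. For each $g\in\mathcal{M}(\mathscr{P}_i)$ one computes $k_i(j_i(g))=(g\circ\rho)\upharpoonright_{\mathscr{P}_i}=g\circ(\rho\circ\iota)=g\circ\mathrm{id}=g$, so $k_i\circ j_i=\mathrm{id}$ and hence $j_i$ is injective. Equivalently, one may invoke the contravariant functor $\mathscr{M}$ of \cite{MS13}, setting $k_i=\mathscr{M}(\iota)$ and $j_i=\mathscr{M}(\rho)$: functoriality gives $k_i\circ j_i=\mathscr{M}(\rho\circ\iota)=\mathscr{M}(\mathrm{id})=\mathrm{id}$, while the duality turns the injective $\mathbb{Z}$-map $\iota$ into the surjection $k_i$ and the surjective $\mathbb{Z}$-map $\rho$ into the injection $j_i$, exactly as used in the proof of Proposition \ref{JEP}.

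The step I expect to require the most care is the construction of $\rho$: one must restrict the \emph{global} retraction $\eta$ onto $\mathscr{P}_i$ (not a retraction onto $\overline{\mathscr{P}_i}$) and check that this restriction still lands in $\mathscr{P}_i$ and fixes it pointwise. This is precisely where the inclusion $\mathscr{P}_i\subseteq\overline{\mathscr{P}_i}$ and the idempotency of $\eta$ are essential, and it is what guarantees that $\mathscr{P}_i$ remains a $\mathbb{Z}$-retract of the larger convex polyhedron $\overline{\mathscr{P}_i}$. Everything else — that restrictions and composites of $\mathbb{Z}$-maps are $\mathbb{Z}$-maps, and that $j_i,k_i$ preserve the MV-operations — is routine.
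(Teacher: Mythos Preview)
Your proof is correct, but the paper takes a shorter and more categorical route. Both arguments obtain the surjection $k_i$ from the inclusion $\mathscr{P}_i\subseteq\overline{\mathscr{P}_i}$ via the duality of \cite{MS13} (your $k_i=\mathscr{M}(\iota)$). The difference is in how $j_i$ is produced: rather than restricting the global $\mathbb{Z}$-retraction $\eta$ to $\overline{\mathscr{P}_i}$ to get a geometric retraction $\rho$ and then dualizing, the paper simply invokes the abstract lifting property of projective objects. Since $\free(n_i)/\mathscr{P}_i$ is projective (this is the content of the Marra--Spada construction) and $k_i$ is onto, the identity on $\free(n_i)/\mathscr{P}_i$ lifts along $k_i$ to some $j_i$ with $k_i\circ j_i=\mathrm{id}$; injectivity of $j_i$ is then automatic. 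The paper does not separately verify the $\mathbb{Z}$-retract statement in the body of the proof, treating it as already established by the preceding discussion and Theorem~\ref{thm:CabMund}. Your approach is more constructive---you actually exhibit $j_i$ as $g\mapsto g\circ\rho$---while the paper's avoids any geometry beyond the inclusion map. Both are equally valid; the paper's argument saves a paragraph, whereas yours would be preferable if an explicit description of $j_i$ were needed later (it is not, in Lemma~\ref{lemmaNullary2}).
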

\begin{proof}
Since $\mathscr{P}_i\subseteq\overline{\mathscr{P}_i}$, by the duality in \cite{MS13},  we get that there is a surjective homomorphism $k_i:\free(n_i)/\overline{\mathscr{P}_i}\to \free(n_i)/\mathscr{P}_i$, see also \cite[Theorem 3.5]{Cabrer15}. Since $\free(n_i)/{\mathscr{P}_i}$ is projective (as shown in \cite{MS13}), it follows by the definition of projective algebras that there exists an  homomorphism $j_i: \free(n_i)/\mathscr{P}_i\to\free(n_i)/\overline{\mathscr{P}_i}$ such that $k_i\circ j_i$ is the identity map on $\free(n_i)/\mathscr{P}_i$ and hence $j_i$ is necessarily injective. 
\end{proof}
In the next key lemma the basic  notation  is taken from Figure \ref{figNullary}.
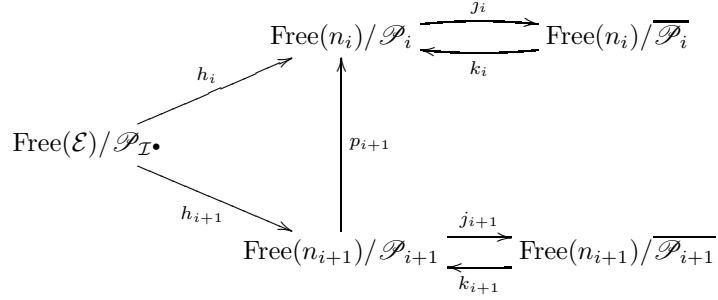
\begin{figure}
$$
\xymatrix{
& \free(n_i)/\mathscr{P}_i\ar@/^/[r]^{j_i}&\free(n_i)/\overline{\mathscr{P}_i}\ar@/^/[l]^{k_i}\\
 \free(\cc E) / \scr P_{\mathcal{I}^{\bullet}}\ar[ur]^{h_i}\ar[dr]_{h_{i+1}}&&\\
& \free(n_{i+1})/\mathscr{P}_{i+1}\ar@/^/[r]^{j_{i+1}}\ar[uu]_{p_{i+1}}&\free(n_{i+1})/\overline{\mathscr{P}_{i+1}}\ar@/^/[l]^{k_{i+1}}\\
}
$$
\caption{Basic construction}
\label{figNullary}
\end{figure}
\begin{lemma}\label{lemmaNullary2}
For all $i=1,2,\ldots$, the following conditions hold.
\vspace{.2cm}

\noindent(1) $j_i\circ h_i=j_i\circ p_{i+1}\circ k_{i+1}\circ j_{i+1}\circ h_{i+1}$;
\vspace{.2cm}

\noindent(2) There is no homomorphism $h: \free(n_{i})/\overline{\mathscr{P}_i}\to \free(n_{i+1})/\overline{\mathscr{P}_{i+1}}$ such that $h\circ j_i\circ h_i=j_{i+1}\circ h_{i+1}$. 
\vspace{.2cm}

\noindent(3) Let ${\bf C}$ be a coherent MV-algebra and let $h: \free(\cc E) / \scr P_{\mathcal{I}^{\bullet}}\to {\bf C}$ be a homomorphism. Then there exists a $i_0$ and a homomorphism $h':  \free(n_{i_0})/\overline{\scr P_{i_0}}\to {\bf C}$ such that $h=h'\circ j_{i_0}\circ h_{i_0}$. 

\end{lemma}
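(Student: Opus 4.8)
The plan is to obtain all three items by direct diagram chases in Figure \ref{figNullary}, using only two facts already in hand: the retraction identity $k_i \circ j_i = \mathrm{id}_{\free(n_i)/\mathscr{P}_i}$ from Lemma \ref{lemmaNullary1}, and the three properties of the $h_i$'s and $p_{i+1}$'s from Lemma \ref{lemmaRephrasesGlue}. Conceptually, the whole point is to transport the pathological $\omega$-chain living among the \emph{projective} algebras $\free(n_i)/\mathscr{P}_i$ up to the \emph{coherent} algebras $\free(n_i)/\overline{\mathscr{P}_i}$ along the maps $j_i$ and $k_i$. For (1), I would simply compute, reading the right-hand side from the inside out: since $k_{i+1}\circ j_{i+1}$ is the identity we get $k_{i+1}\circ j_{i+1}\circ h_{i+1}=h_{i+1}$; then $p_{i+1}\circ h_{i+1}=h_i$ by Lemma \ref{lemmaRephrasesGlue}(1); finally precomposing with $j_i$ yields $j_i\circ h_i$, as required.

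The crux, which I expect to be the main obstacle, is (2), since it is precisely the place where the strictness of the chain must be inherited from the projective level. I would argue by contradiction. Suppose there were a homomorphism $h:\free(n_i)/\overline{\mathscr{P}_i}\to\free(n_{i+1})/\overline{\mathscr{P}_{i+1}}$ with $h\circ j_i\circ h_i=j_{i+1}\circ h_{i+1}$. The idea is to push $h$ back down to the projective level and contradict Lemma \ref{lemmaRephrasesGlue}(2). Concretely, set $k:=k_{i+1}\circ h\circ j_i:\free(n_i)/\mathscr{P}_i\to\free(n_{i+1})/\mathscr{P}_{i+1}$, which is well-typed by the arrows in Figure \ref{figNullary}. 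Then $k\circ h_i=k_{i+1}\circ h\circ j_i\circ h_i=k_{i+1}\circ j_{i+1}\circ h_{i+1}=h_{i+1}$, using the hypothesis on $h$ and once more $k_{i+1}\circ j_{i+1}=\mathrm{id}$. This exhibits exactly a homomorphism forbidden by Lemma \ref{lemmaRephrasesGlue}(2) (instantiated at $i+1>i$), giving the contradiction. The only things to double-check are that $k_{i+1}$ is a genuine homomorphism into the projective algebra and that the composition is well-typed, both of which are immediate from Lemma \ref{lemmaNullary1} and the diagram.

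For (3), I would first note that since ${\bf C}$ is coherent it is projective by Corollary \ref{corProjProp}, so Lemma \ref{lemmaRephrasesGlue}(3) applies and yields an index $i_0$ together with a homomorphism $g':\free(n_{i_0})/\mathscr{P}_{i_0}\to{\bf C}$ such that $h=g'\circ h_{i_0}$. Setting $h':=g'\circ k_{i_0}$ then gives $h'\circ j_{i_0}\circ h_{i_0}=g'\circ k_{i_0}\circ j_{i_0}\circ h_{i_0}=g'\circ h_{i_0}=h$, again via the retraction identity, which is the desired factorization. Taken together, items (1) and (2) say that the coherent unifiers $j_i\circ h_i:\free(\cc E)/\scr P_{\mathcal{I}^{\bullet}}\to\free(n_i)/\overline{\mathscr{P}_i}$ form a strictly increasing chain of order type $\omega$ in the generality preorder, while (3) says this chain is cofinal among all algebraic probabilistic unifiers with coherent codomain; this is exactly the data needed to conclude that $\mathcal{I}$ has nullary unification type.
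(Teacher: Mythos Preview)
Your proposal is correct and follows essentially the same approach as the paper's own proof: each item is handled by the same diagram chase using $k_i\circ j_i=\mathrm{id}$ from Lemma~\ref{lemmaNullary1} together with the corresponding clause of Lemma~\ref{lemmaRephrasesGlue}, with the same auxiliary map $k_{i+1}\circ h\circ j_i$ in (2) and $g'\circ k_{i_0}$ in (3). The only differences are notational (the paper names the pushed-down map $h'$ rather than $k$, and writes $h''$ where you write $g'$).
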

\begin{proof}
(1) Direct inspection shows that both the compositions map $ \free(\cc E) / \scr P_{\mathcal{I}^{\bullet}}$ to $\free(n_i)/\overline{\mathscr{P}_i}$. Moreover, from Lemma \ref{lemmaNullary1}, $k_{i+1}\circ j_{i+1}$ is the identity map on $\free(n_{i+1})/\mathscr{P}_{i+1}$. Thus, the composition on the right-hand side of (1) equals $j_i\circ p_{i+1}\circ  h_{i+1}$ and $p_{i+1}\circ  h_{i+1}=h_i$ by Lemma \ref{lemmaRephrasesGlue} (1). Thus the claim is settled.
\vspace{.2cm}

\noindent(2) Assume by way of contradiction that such $h$ exists and define $h':\free(n_{i})/\mathscr{P}_i\to \free(n_{i+1})/\mathscr{P}_{i+1}$ as $k_{i+1}\circ h\circ j_i$. Then one would have that $h'\circ h_i=k_{i+1}\circ h\circ j_i\circ h_i=k_{i+1}\circ j_{i+1}\circ h_{i+1}$. Again by Lemma \ref{lemmaNullary1}, $k_{i+1}\circ j_{i+1}$ is the identity on $\free(n_{i+1})/\mathscr{P}_{i+1}$ and therefore one would have $h'\circ h_i=h_{i+1}$ contradicting Lemma \ref{lemmaRephrasesGlue} (2). 

$$
\xymatrix{
 \free(n_i)/\mathscr{P}_i\ar@/^/[r]^{j_i}\ar[d]^{h'}&\free(n_i)/\overline{\mathscr{P}_i}\ar@/^/[l]^{k_i}\ar[d]^{h}\\
 \free(n_{i+1})/\scr P_{i+1}\ar@/^/[r]^{j_{i+1}}&\free(n_{i+1})/\overline{\scr P_{i+1}}\ar@/^/[l]^{k_{i+1}}\\
}
$$

\vspace{.2cm}

\noindent(3) Since coherent MV-algebras are projective, by Lemma \ref{lemmaRephrasesGlue} (3), there exists a $i_0$ and a homomorphism $h'': \free(n_{i_0})/\mathscr{P}_{i_0}\to {\bf C}$ such that $h=h''\circ h_{i_0}$. Then, let $h'=h''\circ k_{i_0}$. Thus, $h'\circ j_{i_0}\circ h_{i_0}=h''\circ k_{i_0}\circ j_{i_0}\circ h_{i_0}$. Again by Lemma \ref{lemmaNullary1}, $k_{i_0}\circ j_{i_0}=id$ and hence  $h'\circ j_{i_0}\circ h_{i_0}=h''\circ h_{i_0}=h$. 
$$
\xymatrix{
 \free(\cc E) / \scr P_{\mathcal{I}^{\bullet}} \ar[r]^{h_{i_0}}\ar[ddr]_{h}& \free(n_{i_0})/\scr P_{i_0}\ar@/^/[r]^{j_{i_0}}\ar[dd]_{h''}&\free(n_{i_0})/\overline{\scr P_{i_0}}\ar@/^/[l]^{k_{i_0}}\ar[ddl]_{h'}\\
&&\\
& {\bf C}&\\
}
$$

\end{proof}
\begin{theorem}
The unification type of $\FPL$ is nullary. 
\end{theorem}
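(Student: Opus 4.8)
The plan is to prove that the single concrete problem fixed above, $\mathcal{I}=\{P(x_1)\vee \neg P(x_1)\vee P(x_2)\vee \neg P(x_2)=\top\}$, already has nullary type, and then to transfer this to the whole logic via Theorem \ref{thm:unificationtype}. Since by Theorem \ref{thm:probunifghilardi} the symbolic problem $\mathcal{I}$ and the algebraic problem ${\sf A}(\mathcal{I})=\free(\cc E)/\scr P_{\mathcal{I}^\bullet}$ have isomorphic posets of unifiers, I would argue entirely on the algebraic side, where an algebraic unifier is a homomorphism from $\free(\cc E)/\scr P_{\mathcal{I}^\bullet}$ into a coherent MV-algebra. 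The $\omega$-chain witnessing nullarity is the family $u_i:=j_i\circ h_i\colon \free(\cc E)/\scr P_{\mathcal{I}^\bullet}\to \free(n_i)/\overline{\mathscr{P}_i}$ for $i=1,2,\ldots$. The first point to check is that each $u_i$ is a \emph{bona fide} algebraic probabilistic unifier, i.e. that its codomain is coherent: this holds because $\overline{\mathscr{P}_i}$ is convex and contains a Boolean point, so $\free(n_i)/\overline{\mathscr{P}_i}\in\CMV$ by Theorem \ref{propZRet} and Definition \ref{defCoheMV}. This is exactly where the convex-closure passage of Lemmas \ref{lemmaNullary1}–\ref{lemmaNullary2} is indispensable, since the bare $h_i$ of Lemma \ref{lemmaRephrasesGlue} only maps into the projective but \emph{non}-coherent algebra $\free(n_i)/\mathscr{P}_i$, and hence is not itself a probabilistic unifier.

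Next I would translate the three clauses of Lemma \ref{lemmaNullary2} into statements about the generality preorder $\preceq$ on algebraic unifiers (recall $u\preceq u'$ iff $u=p\circ u'$ for some homomorphism $p$, i.e. $u'$ is the more general one). Clause (1) rewrites as $u_i=(j_i\circ p_{i+1}\circ k_{i+1})\circ u_{i+1}$, which is precisely $u_i\preceq u_{i+1}$. Clause (2) asserts that no homomorphism $h$ satisfies $h\circ u_i=u_{i+1}$, i.e. $u_{i+1}\not\preceq u_i$; combined with clause (1) this yields the \emph{strict} inequality $u_i\prec u_{i+1}$ for every $i$, so $(u_i)_i$ is a strictly ascending chain of order type $\omega$ in the poset $\mathscr{A}_{{\sf A}(\mathcal{I})}$. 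Finally, clause (3) states that for every coherent MV-algebra $\mathbf C$ and every algebraic unifier $v\colon\free(\cc E)/\scr P_{\mathcal{I}^\bullet}\to\mathbf C$ there are $i_0$ and $h'$ with $v=h'\circ u_{i_0}$, that is $v\preceq u_{i_0}$; hence the chain $(u_i)_i$ is co-final among the algebraic unifiers of ${\sf A}(\mathcal{I})$.

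From these three facts the conclusion follows by a short order-theoretic argument: given any unifier $v$, co-finality gives $v\preceq u_{i_0}$ for some $i_0$, and since $u_{i_0}\prec u_{i_0+1}$ we get $v\prec u_{i_0+1}$, so $v$ is not maximal. Thus $\mathscr{A}_{{\sf A}(\mathcal{I})}$ has \emph{no} maximal element at all, while carrying a co-final strictly ascending $\omega$-chain; consequently it admits no minimal complete set of unifiers (any complete set would have to cover cofinally many $u_i$, and its lowest elements could always be discarded, so no complete set is minimal), which is exactly the nullary case. Invoking Theorems \ref{thm:probunifghilardi} and \ref{thm:unificationtype}, the symbolic problem $\mathcal{I}$ inherits the nullary type, whence $\FPL$ has nullary unification type. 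I expect the theorem itself to be routine diagram-chasing once Lemma \ref{lemmaNullary2} is in hand; the genuinely delicate points to get right are (i) keeping the direction of $\preceq$ consistent so that strictness and co-finality point the correct way, and (ii) arguing carefully that a co-final strictly ascending chain with no maximal element forces nullarity rather than the infinitary type, which is the crux that distinguishes our coherent setting from the projective one of \cite{MS13}.
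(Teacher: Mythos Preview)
Your proposal is correct and follows essentially the same approach as the paper: define $u_i=j_i\circ h_i$, read off from Lemma~\ref{lemmaNullary2} that the $u_i$ form a strictly ascending, co-final $\omega$-chain of algebraic unifiers for ${\sf A}(\mathcal{I})$, and then invoke Theorem~\ref{thm:unificationtype}. The paper's proof is more terse (it simply cites Lemma~\ref{lemmaNullary2} for the chain and its co-finality and concludes), whereas you spell out the translation of each clause into the generality preorder and the order-theoretic reason why a co-final strictly ascending $\omega$-chain forces nullarity; your aside in point~(ii) is slightly off, though, since the nullarity-versus-infinitary argument is the same standard order-theoretic one as in \cite{MS13}, and the genuinely new ingredient in the coherent setting is only point~(i), namely ensuring the codomains $\free(n_i)/\overline{\mathscr{P}_i}$ are coherent.
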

\begin{proof}
Consider the probabilistic unification problem $\mathcal{I}$ described at the beginning of this subsection. Its corresponding algebraic unification problem is the finitely presented algebra $\free(\cc E) / \scr P_{\mathcal{I}^{\bullet}}$, and its set of unifiers contains all the homomorphisms $j_i\circ h_i$ (for all $i=1,2,\ldots$). By Lemma \ref{lemmaNullary2}, the set $\{j_i\circ h_i\}_{i\geq 1}$ forms a chain of algebraic unifiers for ${\sf A}(\mathcal{I})$ whose order-type is $\omega$ and which is co-final in the poset $(U_{{\sf A}(\mathcal{I})}, \preceq)$. The claim then follows from Theorem \ref{thm:unificationtype}.
\end{proof}

\section{Conclusions and future work}\label{sec7}
In this paper we presented an encoding of de Finetti's coherence on \luk\ events (as generalized by Mundici in \cite{Mu06}), into propositional \luk\ logic and its equivalent algebraic semantics, the variety of MV-algebras. Via such encoding and a translation map from the modal language of the probability $\FPL$ to propositional \luk\ language, we also proved that deductions of the former can be treated at the propositional level of the latter. Moreover, we isolated a class of projective MV-algebras with respect to which $\FPL$ is complete and, finally, we studied the probabilistic unification problem for  $\FPL$ via algebraic means and proved that it has nullary type.

Our encoding builds on the duality between finitely presented MV-algebras and rational polyhedra developed in \cite{MS12,MS13}, which can be easily shown to specialize to coherent MV-algebras and ($\mathbb{Z}$-homeomorphic images of) coherent sets.  The strong connection between the algebraic and geometric intuitions has been successfully applied in several deep results such as those contained in \cite{CabMu1,MuAdvanced} and it can, in our opinion,  be further and systematically explored to strengthen the link between probability, algebra and logic. 

Future work on this subject may explore several  directions. In particular, from the algebraic perspective, the two-sorted approach developed in the recent paper \cite{KM} surely needs to be further investigated, and its relation with coherent MV-algebras to be better understood. 
Moreover, in a similar direction, it would be interesting to show whether $\FPL$ is algebraizable (in a sense that necessarily extends the classical Blok and Pigozzi definition \cite{BP89}).

Concerning algebraizable probability logics, the formal system called ${\rm S}\FPL$ in \cite{FM09} is an algebraizable extension of $\FPL$, and its equivalent algebraic semantics is given by the variety $\mathsf{SMV}$ of {\em MV-algebras with an internal state}. However, much less is known for ${\rm S}\FPL$. For instance, it is an open problem to show its standard completeness. Moreover, it would be interesting to understand up to which extent the results presented in the present paper for $\FPL$ can be extended to the more general  ${\rm S}\FPL$.

Finally, concerning probabilistic unification, we already pointed out that our approach focuses on the outer language and treats atomic modal formulas of the form $P(\varphi)$ as variables that have to be coherently evaluated. However, the two-tiered nature of the language of $\FPL$ suggests that another {\em internal} probabilistic unification problem could be investigated.  With the latter we mean the following: consider a (symbolic) probabilistic unification problem $\mathcal{I}=\{t_i[P(\varphi_1),\ldots, P(\varphi_k)] = u_i[P(\varphi_1),\ldots, P(\varphi_k)]\mid i=1,\ldots, m\}$ as in Definition \ref{def:SPUI}. Then, by an {\em internal} unifier for $\mathcal{I}$, one can consider a \luk\ substitution $\sigma$ from the propositional variables occurring in the events $\varphi_1,\ldots,\varphi_k$ such that 
$$
\vdash_{FP} t_i[P(\sigma\varphi_1),\ldots, P(\sigma\varphi_k)] \leftrightarrow u_i[P(\sigma\varphi_1),\ldots, P(\sigma\varphi_k)]. 
$$ 
Notice that the above problem does not reduce to the probabilistic unification problem that we consider in Section \ref{sec6}. In fact, since the operator $P$ is not truth functional, there is no way, in general, to reduce an atomic modal formula of the form $P(\sigma\varphi)$ to $\sigma P(\varphi)$. Therefore, we will need to develop alternative techniques to approach it.

All the research directions we mentioned above will surely need a deeper understanding of the relationships between logic, algebra, geometry and uncertainty that the present paper has hopefully contributed to grasp.

\section*{Funding}
The authors acknowledge partial support by the MOSAIC project (H2020-MSCA-RISE-2020 Project 101007627). Ugolini acknowledges support from the Marie Sk\l odowska-Curie grant agreement No 890616 (H2020-MSCA-IF-2019), and the Ramon y Cajal programme RyC2021-032670-I. Flaminio acknowledges support by the Spanish project PID2019-111544GB - C21/AEI/10.13039/501100011033.


\end{document}